\documentclass[reqno,11pt]{article}
\usepackage{latexsym}
\usepackage{amsmath}
\usepackage{amssymb}
\usepackage{bbm}
\usepackage{amsthm}
\usepackage{color}
\usepackage{dsfont}
\usepackage{mathtools}
\usepackage{xcolor}
\usepackage{listings}
\usepackage{lipsum}
\usepackage[export]{adjustbox}
\usepackage{hyperref}
\usepackage{stmaryrd}

\usepackage{graphicx}
\usepackage{epsfig}
\usepackage{catchfilebetweentags}
\usepackage[
backend=bibtex,
style=numeric, url = false, doi= false
]{biblatex}
\definecolor{Red}{rgb}{1,0,0}

 0 \DeclareMathAlphabet{\mathbfit}{OT1}{cmr}{bx}{it}

\newcommand{\bea}{\begin{eqnarray*}}
\newcommand{\eea}{\end{eqnarray*}}
\newcommand{\be}{\begin{eqnarray}}
\newcommand{\ee}{\end{eqnarray}}
\newcommand{\beq}{\begin{equation}}
\newcommand{\eeq}{\end{equation}}

\newcommand{\zd}{\mathbb{Z}^d}

\renewcommand{\P}{\mathbb{P}}
\newcommand{\E}{\mathbb{E}}
\newcommand{\N}{\mathbb{N}}

\newcommand{\Z}{\mathbb{Z}}

\newtheoremstyle{theorem}
                {10pt}
                {0pt}
                {}
                {}
                {\bfseries}
                {}
                {\newline}
                {}
\theoremstyle{theorem}
\newtheorem{thm}{Theorem}
\newtheorem{cor}[thm]{Corollary}
\newtheorem{rmk}[thm]{Remark}
\newtheorem{lem}[thm]{Lemma}

\newtheorem{prop}[thm]{Proposition}

\begin{document}

\title{The constant speed random walk on dynamical conductances} 

\author{Eszter Couillard \thanks{Department of Mathematics, Technical University of Munich, \texttt{eszter.couillard@tum.de}}, Carlo Scali \thanks{Department of Mathematics, Technical University of Munich, \texttt{carlo.scali@tum.de}}}
\date{}
\maketitle

\begin{abstract}
	In this paper we study the constant speed, biased random walk in dynamical random conductances. In certain regimes of the parameters of the model, we prove that the random walk is ballistic. We show annealed and quenched CLT in the full parameter range. Furthermore, as a consequence of the non-reversible nature of this model, we show that the classical Einstein relation does not hold. 
	\medskip
	\noindent \textbf{MSC:} 60K37 (primary), 60G50, 82C41, 60K35. 
\end{abstract}

\tableofcontents
\section{Introduction}

Random walks in dynamical random environments form an established field of research in probability theory. In these models, the random walk evolves in a random environment that itself evolves stochastically. Establishing a law of large numbers (LLN) and a central limit theorem (CLT) has been a central focus of this research. Under uniform mixing, well-established techniques, such as the construction of renewal structures, yield these results \cite{CometsZeitouni, ZeitouniQuenched, BMP1, BMP3, BMP2}. More recently, substantial efforts have focused on settings where mixing is slow or nonuniform. A non-exhaustive list of works in this direction includes \cite{AvenaBlondelFaggionato, AvenaDaCostaPeterson, BlondelHilarioTeixeira, BlondelReversible, KiousHilarioTeixeira, Conchon-KerjanKiousRodriguez}.

A central question concerns the value of the limit appearing in the LLN: if this limit is nonzero, the walk is said to be ballistic. Particularly strong progress has been made in dimension $d=1$. Recent works \cite{Conchon-KerjanKiousRodriguez} and \cite{CKP} established the existence of a limiting speed and information about its value for a broad class of models allowing slow mixing.

Less is known in higher dimensions, particularly beyond the perturbative regime, where we highlight the work \cite{BethuelsenVollering}. The perturbative regime, in which the stochasticity of the environment introduces only small perturbations to a well analysed system, is better understood.

The model studied in the present paper is the constant speed biased random walk on dynamical random conductances (CBRW). It retains the favorable properties of a fast mixing environment, allowing several results to be established with relative ease. However, the process is not reversible, making qualitative information about the limiting speed difficult to obtain. Beyond the perturbative regime, arguments typically rely on couplings. In our setting, coupling is straightforward in $d=1$, but appears more complicated in higher dimensions.

Let us now compare our model to its static-environment counterparts. One model of interest is the biased random walk on the supercritical percolation cluster. For sufficiently strong biases, the existence of a directionally transient but sub-ballistic regime was proved in \cite{SznitmanPerc, BergerGantertPeres}. Later, \cite{FriberghHammond} showed that the transition between ballistic and sub-ballistic behavior occurs at a single critical bias. In the dynamical setting, \cite{percolation} recently proved, among other precise results, that the biased variable speed random walk on dynamical percolation always has strictly positive speed in the bias direction.

For random walks in random i.i.d.\ conductances (RWRC) in the presence of an external field, \cite{Frib_conduc} proved that, in $d \ge 2$, the walk is ballistic if and only if the expected conductance is finite. The situation differs in dimension $1$, where an infinite first moment of either the conductances or the resistances causes sufficient slowdown for sub-ballistic behavior (see \cite{BergerShort, berger2020}). Recently, the first author studied the biased variable speed random walk among dynamical random conductances (VBRW) in \cite{couillard}, establishing that its speed is always strictly positive.

The intuition for why dynamics should eliminate the directionally transient but zero-speed regime is simple: slowdown is typically caused by local traps that quickly dissolve in the dynamical setting. We therefore expect our model to have positive speed throughout the parameter range. The main difference between CBRW and VBRW is that the former is not reversible, whereas the latter is, both for random conductances and on the percolation cluster. Several strategies for proving positive speed, or even directional transience, rely crucially on reversibility (see \cite{Frib_conduc, FriberghHammond, percolation, couillard}). Our model thus offers a promising setting for studying positivity of the speed: it retains many favorable features, yet established arguments do not seem to apply. In this work, we initiate its study by establishing several results on the speed and fluctuations around it, partially answering the main questions in certain parameter ranges.

\subsection{The model}

Let us consider the graph $\mathbb{Z}^d$ with nearest neighbor undirected edges, i.e.\ $e \in E(\mathbb{Z}^d)$ if and only if $e = \{x, y\}$, with $\|x - y\|_1 = 1$. We denote the neighboring relation between two vertices with the notation $x \sim y$. Let $\{\omega_t(e)\}_{t \ge 0, e \in E(\mathbb{Z}^d)}$ denote a family of non-negative weights associated to the edges and evolving in time. In the initial configuration $\{\omega_0(e)\}_{e \in E(\mathbb{Z}^d)}$ we sample the conductances to be i.i.d.\ from a probability measure $q$ on $(0, +\infty)$. We denote the set of configurations by $\Omega = (0,\infty)^{E(\zd)}$ and $Q= q^{E(\zd)}$ the product measure. Then, as the conductances evolve, each one is re-sampled from the distribution $q$ independently of everything else. The update of the edge $e \in E(\mathbb{Z}^d)$ happens according to a Poisson process of rate $\mu > 0$, which we denote $\{R(e, t)\}_{t \ge 0}$. The Poisson processes associated to the edges are all independent of each other. In general if we assume that the distribution $q$ is uniformly elliptic we write $\kappa > 1$ for the uniform elliptic constant, i.e. $q$ is supported on $[\kappa^{-1},\kappa]$. We denote by $\mathbf{P}$ the law of the environment, that is, the law generating the conductances and their time dynamics. We denote by $\mathbf{E}$ and $\mathbf{Var}$ the expectation and the variance w.r.t.\ $\mathbf{P}$. 

Let $e_1, \dots, e_d$ denote the canonical basis of $\mathbb{Z}^d$ and let us set the notation $e_{d+1}, \dots, e_{2d} = -e_1, \dots, -e_{d}$. Let $\ell \in \mathbb{S}^{d-1}$, we assume that the canonical base of $\zd$ is chosen such that $e_1 \cdot \ell \ge \dots \ge e_{d} \cdot \ell \ge 0$. For any $\lambda> 0$ and $\ell \in \mathbb{S}^{d-1}$, we define a tilted version of the conductance $e = \{x, y\}$ in the following fashion
\begin{equation*}
    \omega^{\lambda, \ell}_t(\{x, y\}) \coloneqq e^{\lambda (y - x)  \cdot \ell} \omega_t(\{x, y\}).
\end{equation*}
Let us also introduce, for $x \in \mathbb{Z}^d$
\begin{equation}\label{eqn:NormConstant}
    W^{\lambda, \ell}_t(\omega, x) \coloneqq \sum_{y \sim x} \omega^{\lambda, \ell}_t(\{x, y\}).
\end{equation}

We now define  the biased constant speed random walk on dynamical random conductances. Firstly, let $\{\mathcal{P}_t\}_{t \ge 0}$ be a Poisson process of intensity $1$ governing the jump times of the random walk. At the discontinuity points of $\{\mathcal{P}_t\}_{t \ge 0}$ that we denote by $(T_k)_{k \geq 0}$, the random walk $\{X_{t}\}_{t \ge 0}$ jumps to a neighbor with probability, for $i = 1, \dots, 2d$
\begin{equation*}
    p_{T_k}^{\lambda, \omega}(x, x + e_i) \coloneqq \frac{\omega^{\lambda, \ell}_{T_k}(\{x, x+e_i\})}{W^{\lambda, \ell}_{T_k}(\omega, x)} \qquad \text{for all $k \geq 1$, and } i = 1, \dots, 2d.
\end{equation*}
We denote by $v_{\ell}(\lambda,\mu)$ the speed of a CBRW with parameters $\lambda, \mu>0$ and $\ell \in \mathbb{S}^{d-1}$, in Lemma \ref{regenerativSpeed} we show that the speed exists and give an alternative expression for it.

\subsection{Summary of main results and structure of the paper}
In this paper we start by studying positivity of the speed for $\lambda, \mu >0$. Showing positivity for all $\lambda , \mu >0$ for dimension $d\geq 2$ turns out to be challenging as the process is not reversible, and it also can't be written as a time change of the variable speed process studied in \cite{couillard}. In this paper we show the positivity of the speed in different regimes:
\begin{itemize}
    \item For any distribution $q$, any $\mu >0$ and $\lambda$ large enough (see Theorem \ref{thm:FirstMain}).
    \item For any distribution $q$, $\mu$ large enough and $\lambda >0 $ (see Theorem \ref{thm:SecondMain}).
    \item For $q$ uniformly elliptic, $\mu$ small enough and $\lambda >0$ (see Theorem \ref{thm1}).
\end{itemize}
As we have temporal regeneration times, it is easy to show that the linear speed exists. The challenging part to prove positivity becomes to show the directional transience of the process.

Making use of the existence of the speed, we can study the CLT for the CBRW and we show in Theorem \ref{CLTVBRW} an annealed CLT for all $q$ and $\lambda, \mu >0$. The annealed CLT is then used to deduce the quenched CLT in Theorem \ref{CLTQuenched}.

At last, we look at the derivative w.r.t.\ $\lambda$ of the speed at $0$, and show that the Einstein relation does not hold for this process. This is the expected behavior as the process is not reversible, but differs significantly from the static environment case studied in \cite{NinaNagelGuo}.

\section{Preliminaries}\label{sect:Prelim}

For any $x \in \zd$ and $t \geq 0$, we denote the time and space shifted environment by \[(\theta_{x}\theta_s\omega)_t({y,z}) \coloneqq  \omega_{t+s}({x+y,x+z})\] for some $y \sim z$.
\paragraph{Regeneration times.} Let us introduce the regeneration times as in \cite{couillard}. A similar construction is also present in \cite{percolation}. Let us iteratively define the \textbf{infected set} $\{I_t\}_{t\ge0}$. We begin with $I_0 = \varnothing$. Let $\{e_i\}_{i \in \mathbb{N}}$ be an arbitrary deterministic ordering of the edges of $\mathbb{Z}^d$. Furthermore, for each edge $e_i$, we define an infinite number of copies that we denote $\{e_{i, j}\}_{j \in \mathbb{N}}$. Then, whenever the Poisson clock associated with the random walk $\{X_t\}_{t \ge 0}$ rings and assume that $X_{t^-} = z \in \mathbb{Z}^d$ we do the update
\begin{equation}\label{eqn:InfectedSet}
    I_{t} = I_{t^-} \cup \{e_{i, j}\}_{i \in \mathcal{E}(z), j \text{ minimal}}.
\end{equation}
Let us explain this last definition, $\mathcal{E}(z) \coloneqq \{e \in E(\mathbb{Z}^d) \colon z \in e\}$ and the choice of $j$ is minimal in the sense that the first time the edge $e_i$ is seen by the random walk we add $e_{i, 1}$, then at each update we add $e_{i, j}$ associated with the smallest $j \in \mathbb{N}$ such that $e_{i, j} \not \in I_{t^-}$. This is important in order to ensure that if the copy $e_{i,1}$ gets removed (we will explain how), it is added again the next time the random walks see that edge.

We remove the edges from the infected set in the following fashion. Let us define a Poisson process of inhomogeneous intensity $|I_{t}| \mu$. Each time this clock rings choose uniformly at random a (copy of an) edge from $I_t$ and remove it from the infected set. If the choice is associated to the copy $e_{i, 1}$ then we also refresh the state of the edge in the actual environment $\{\omega_t\}_{t \ge 0}$. One can easily check that in this way the joint process $(X_t, \omega_t)_{t \ge 0}$ has the correct transition rates and transition probabilities.

We are now in position to define the sequence $\{\tau_i\}_{i \in \mathbb{N}_0}$ of regeneration times with the convention $\tau_0 = 0$. Iteratively we set
\begin{equation}\label{eqn:RegTimes}
    \tau_{i + 1} \coloneqq \inf\left\{ t > \tau_i \colon I_{t} = \varnothing, I_{s} \neq \varnothing \text{ for some } s \in (\tau_i, t)\right\}.
\end{equation}
We remark that the increments $\{\tau_{i+1} - \tau_i\}_{i \ge 0}$ and $\{X_{\tau_{i+1}} - X_{\tau_i}\}_{i \ge 0}$ are two families of i.i.d.\ random variables. Recall that we denote by $(R(e,t))_{t\geq0}$ the Poisson process governing the refreshings of the edge $e \in E(\zd)$.
We define $\bar{\omega} \coloneqq (\omega_t)_{t\ge 0}$ and write $P^{\lambda}_{\bar{\omega}, x}$ for the (quenched) measure of the random walk started at $x \in \mathbb{Z}^d$ for a given environment evolution $(\omega_t)_{t\geq 0}$.
Let $\P_{\omega, x}^{\lambda,\mu}$ denote the joint (annealed) law of $(\omega_s,X_s)_{s \ge 0}$ started with configuration $\omega$ and at position $x$, and let $\P_x^{\lambda,\mu}$ denote the joint law of $(\omega_s,X_s)_{s \ge 0}$ started at $x \in \mathbb{Z}^d$ with a starting configuration distributed according to $Q$. We often omit the starting point and write $P^{\lambda}_{\bar{\omega}}$ and $\P^{\lambda,\mu}$ whenever $x = 0$. We may drop the superscript $\lambda$ and $\mu$ if they are clear form the context. The corresponding expectations are respectively denoted by $E^{\lambda}_{\bar{\omega}}, \E^{\lambda}, \E_{\omega,x}^{\lambda,\mu}$.
We write $N([\tau_n,\tau_n+1]) \coloneqq |\mathcal{P}\cap [\tau_n,\tau_n+1]|$ and $N(\tau) \coloneqq N([0,\tau_1])$. Note that for all $n \geq 1$,  $N([\tau_n,\tau_n+1])$ has the same distribution then $N(\tau)$. We summarize the properties of these random variables that we need in the following lemma.

\begin{lem}\label{lemma:ExpTail}
    There exists a constant $c = c(\mu) > 0$ such that
    \begin{equation*}
        \mathbb{E}\left[e^{c\tau}\right] < \infty, \quad \text{and} \quad \mathbb{E}\left[e^{cN(\tau)}\right] < \infty.
    \end{equation*}
\end{lem}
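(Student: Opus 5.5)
The process $|I_t|$ depends only on the walk's Poisson clock $\mathcal{P}$ and the removal clocks, not on the conductances or on the walk's position. I will therefore study the integer-valued process $Y_t \coloneqq |I_t|$ as a continuous-time birth--death type chain. At each ring of $\mathcal{P}$ (rate $1$), $Y$ increases by exactly $2d$, since one new copy of each of the $2d$ edges adjacent to the current position is added. At rate $\mu Y_t$, $Y$ decreases by $1$. This is an $M/M/\infty$-type queue with batch arrivals of size $2d$ at rate $1$ and service rate $\mu$ per customer. The regeneration time $\tau=\tau_1$ is the first time after leaving $0$ that $Y$ returns to $0$: starting from $Y_0=0$, the chain waits an $\mathrm{Exp}(1)$ time, jumps to $2d$, and $\tau$ is the hitting time of $0$ afterwards. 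Its law depends only on $d$ and $\mu$, as the statement requires.

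To get the exponential moment of $\tau$, I would use a Foster--Lyapunov / renewal argument.

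1. Let $G$ be the event that the clock $\mathcal{P}$ has no ring during a time window of length $1$. Then $\P(G)=e^{-1}$.
2. At any time $s$, by independence from the past I couple $Y$ with the pure-death process from $Y_s$. On $G$, the probability that all current $Y_s$ copies die within that window is $(1-e^{-\mu})^{Y_s}$.
3. This lower bound is uniform only when $Y_s$ is bounded, so I first show that $Y$ returns quickly to a bounded set $\{Y\le K\}$. Take $V(y)=e^{\theta y}$. The generator gives
\[ LV(y)=V(y)\bigl[(e^{2d\theta}-1)+\mu y(e^{-\theta}-1)\bigr]\le -\gamma V(y) \quad\text{for } y\ge K, \]
with $\gamma>0$ once $K$ is large. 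Standard drift results then give exponential moments of the hitting time of $\{Y\le K\}$ from any $y$, bounded by $C V(y)$.
4. Let $S_1,S_2,\dots$ be the successive entrance times into $\{Y\le K\}$, separated by at least unit time. At each $S_i$ we attempt the event "no arrival for one unit of time and all at most $K$ copies die". This succeeds with probability at least $p=e^{-1}(1-e^{-\mu})^{K}>0$, independently of the past by the strong Markov property. If an attempt fails, the state after the unit window is at most $K+2d\,\mathcal{P}(1)$, and $V$ of that state has bounded expectation. So each excursion back to $\{Y\le K\}$ has uniformly bounded exponential moments.
5. The number of attempts is geometric. A geometric sum of i.i.d.-dominated variables with exponential moments has an exponential moment for small $c$: pick $c$ with $\E[e^{c\cdot\text{(cycle length)}}]<1/(1-p)$. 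This gives $\E[e^{c\tau}]<\infty$.

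The bound on $N(\tau)$ then follows cheaply. By Cauchy--Schwarz,
\[ \E[e^{cN(\tau)}]\le \sqrt{\E\bigl[e^{2c\,\mathcal{P}(t)}\bigr]}\,\sqrt{\P(\tau>t)} \]
summed over integer $t$. Here $\E[e^{2c\mathcal{P}(t)}]=e^{t(e^{2c}-1)}$, and $\P(\tau>t)\le Ce^{-c't}$ from the first part. The sum converges once $e^{2c}-1<c'$, that is, for $c$ small.

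The main obstacle I anticipate is step 4: making the cycle decomposition rigorous. Failed attempts can leave $Y$ large, so one must control $\E[V(Y)]$ after a failure uniformly via the drift inequality. One must also ensure the constant $c$ can be chosen small enough relative to $p$ and the drift rate $\gamma$.
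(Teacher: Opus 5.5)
Your proof is correct, but it takes a different route from the paper's. The paper reads Lemma~\ref{lemma:ExpTail} off Lemma~\ref{deathbirthchain} with $\alpha=1$, $L=2d$, and proves the latter (Appendix~\ref{sect:Lemma}) on the embedded jump chain:
\begin{itemize}
\item It fixes a high level $x$ and cuts $\tau$ into excursions between successive visits to $x$.
\item Above $x$ a down-step has probability at least $x\mu/(x\mu+\alpha)$. So each excursion is dominated by the hitting time of a $\{-1,+L\}$ random walk with negative drift, which has exponential tails by a Cram\'er bound (Lemma~\ref{LemmaHittingTimeRW}).
\item The number of excursions is geometric, because from $x$ the chain descends straight to $0$ with probability $\prod_{k\le x}\frac{\mu k}{\mu k+\alpha}$.
\item The bound on $N(\tau)$ is automatic there, since the number of embedded steps up to $\tau$ dominates the number of arrivals.
\end{itemize}
You instead stay in continuous time. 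The Lyapunov function $e^{\theta y}$ with Dynkin's formula gives return to $\{Y\le K\}$, and a ``wipe-out'' window serves as the regeneration trial. You then handle $N(\tau)$ by Cauchy--Schwarz against Poisson moments. The paper's argument is more hands-on and yields the discrete-time statement at the same time. Yours uses neither skip-free descent nor the fixed batch size, so it would survive, e.g., random batch sizes with exponential moments or bounded state-dependent arrival rates.

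On the step you flagged: the cycle length $\Delta$ and the failure event $F$ of a trial are dependent. So the right criterion is not $\mathbb{E}[e^{c\Delta}]<1/(1-p)$ but $\rho:=\sup_{y\le K}\mathbb{E}_y[e^{c\Delta}\mathds{1}_F]<1$. This follows from two bounds:
\begin{itemize}
\item $\mathbb{E}_y[e^{c\Delta}\mathds{1}_F]\le \mathbb{E}_y[e^{2c\Delta}]^{1/2}(1-p)^{1/2}$;
\item $\mathbb{E}_y[e^{2c\Delta}]\le \mathbb{E}_y[e^{\gamma\Delta}]^{2c/\gamma}\to 1$ as $c\downarrow 0$, using your uniform bound on $\mathbb{E}_y[e^{\gamma\Delta}]$.
\end{itemize}
Iterated conditioning at the trial times then gives $\mathbb{E}[e^{c\tau}]\le e^{c}\,\mathbb{E}[e^{cS_1}]\sum_{n\ge 0}\rho^n<\infty$. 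For this, take $K\ge 2d$ and let $S_1$ be the first arrival time, so that a successful trial really is a return to $0$ after leaving it. The paper's geometric-sum step in Appendix~\ref{sect:Lemma} is equally informal about this dependence.
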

This lemma is an immediate consequence of the following more general lemma proved in Appendix \ref{sect:Lemma}.
\begin{lem}\label{deathbirthchain}
    Let $\alpha,\mu >0$ and $L\in \mathbb{N}$ with $L\geq 1$. Let $(A_t)_{t \geq 0 }$ be a continuous time Markov chain on $\mathbb{N}$ with the following $Q$ matrix for $i \geq 0$
    \begin{eqnarray*} 
        q(i,i+L) = \alpha,\\
        q(i+1,i) = (i+1) \mu,\\
        q(i,i) = -\alpha - i \mu.
    \end{eqnarray*}
    Assume $A_0 = 0$ a.s.\ and let $\tau = \inf\{t \geq 0: A_t = 0 \text{ and } \exists 0 \leq s \leq t: \; A_s \neq 0\}$ then $\tau$ has an exponential tail. Furthermore, for $T = \inf\{n \geq 1: A_n = 0\}$ where $(A_n)_{n \geq 0}$ is the discretization of $(A_t)_{t \geq 0 }$, we have that $T$ has an exponential tail.
\end{lem}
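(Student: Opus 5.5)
The strategy is to dominate $(A_t)$ by something whose stationary behaviour is explicit, and then use a renewal/geometric-trials argument. The key observation is that from any state $i$, the chain has positive probability to empty completely within a bounded time window. The rate of departure at level $i$ is at least $i\mu$, and arrivals are a Poisson process of rate $\alpha$ independent of the level.

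\emph{Step 1: realize $A_t$ as an $M/M/\infty$-type system.} Batches of $L$ particles arrive at the times of a Poisson process of rate $\alpha$, and each particle dies independently after an $\mathrm{Exp}(\mu)$ lifetime. This construction reproduces the $Q$-matrix above, since $i$ particles give total death rate $i\mu$. Then $A_t$ has an explicit law. Started from any $A_0=i$, the survivors of the initial population at time $t$ are $\mathrm{Bin}(i,e^{-\mu t})$, and the offspring of arrivals in $[0,t]$ form an independent Poisson-type count with mean at most $\alpha L/\mu$.

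\emph{Step 2: a drift / Lyapunov bound keeping $A$ in a compact set.} Let $V(i)=e^{\theta i}$. The generator gives
\[ \mathcal{L}V(i)= \alpha(e^{\theta L}-1)e^{\theta i}+ i\mu(e^{-\theta}-1)e^{\theta i}, \]
which is at most $-\beta V(i)+b\mathbf 1_{\{i\le K\}}$ for $K$ large. Standard arguments then show that the hitting time of $\{0,\dots,K\}$ from state $i$ has exponential moments bounded by $CV(i)$.

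\emph{Step 3: from $\{i\le K\}$, reach $0$ with uniformly positive probability.} From any $i\le K$, consider the event that no arrival occurs during $[0,1]$ and all $i$ particles die before time $1$. Its probability is at least $p:=e^{-\alpha}(1-e^{-\mu})^K>0$.

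\emph{Step 4: geometric trials and the strong Markov property.} Combining Steps 2 and 3 gives the exponential tail of $\tau$. Each attempt consists of a return to $\{i\le K\}$, which has an exponential moment controlled by the exit state, plus one unit of time. The state after a failed attempt is at most $K$ plus a Poisson number of arrivals times $L$, so $\mathbb E[V]$ of it is uniformly bounded. The number of attempts is geometric with parameter $p$. A sum of a geometric number of variables with uniformly bounded exponential moments has an exponential tail, for $c$ small enough.

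Note that $\tau$ requires first leaving $0$, but that takes an $\mathrm{Exp}(\alpha)$ time, which is harmless.

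\emph{Discrete skeleton.} For $T$, run the same argument on $(A_n)$ with unit time steps. By Step 1, $\mathbb E_i[e^{\theta A_1}]\le e^{\theta' }(1+ie^{-\mu}(e^{\theta}-1))\cdots$, which gives a geometric drift of $V$ for the skeleton chain outside a finite set. From $\{i\le K\}$, we have $P(A_1=0)\ge p$. The same trials argument then gives an exponential tail for $T$.

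I expect the main obstacle to be Step 2, namely getting the uniform control of the exponential moment of the state after failed attempts. The $M/M/\infty$ representation makes this explicit: the state is dominated by $K+L\cdot\mathrm{Poisson}(\alpha)$. If the Lyapunov route becomes messy, I would use this domination directly.
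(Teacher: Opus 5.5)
Your argument for $\tau$ is correct, but it takes a different route from the paper.

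\textbf{The paper's route.} The paper passes to the embedded jump chain; continuous time then follows because holding times are dominated by $\mathrm{Exp}(\alpha)$. It fixes a level $x>\alpha L(1+\mu^{-1})$. Above $x$, the down-probability $i\mu/(i\mu+\alpha)$ makes the chain dominated by a walk with steps $-1$ and $+L$ and negative drift. The return time is then split into excursions between successive visits to $x$:
the parts above $x$ are controlled by a Cram\'er bound for that walk (Lemma~\ref{LemmaHittingTimeRW});
the parts below $x$ are hitting times of a finite chain;
the number of excursions is geometric with success probability $\prod_{k=1}^{x}\frac{\mu k}{\mu k+\alpha}$, the probability of descending straight from $x$ to $0$.

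\textbf{Your route, and what each buys.} You stay in continuous time and replace the comparison walk by the Lyapunov function $e^{\theta i}$, which gives $\mathbb{E}_i[e^{\beta\sigma_K}]\le e^{\theta i}$ uniformly in the starting state. You regenerate at entrances to $\{0,\dots,K\}$, using time-$1$ windows with success probability $e^{-\alpha}(1-e^{-\mu})^K$. The paper's argument is more elementary, needing only a biased random walk and finite-state hitting times. Yours gives uniform control of the state after a failed trial, via $A_1\le K+L\,\mathrm{Poisson}(\alpha)$. That makes the combination step clean despite the dependence between the number of trials and their lengths, e.g.\ $\mathbb{P}(\tau>t)\le(1-p)^{\delta t}+e^{-c_0t}M^{\delta t}$, where $M$ bounds the conditional exponential moments of a trial. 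By contrast, the paper's final step factorizes $\mathbb{E}[e^{\varepsilon\tau_1}]$ as if the excursions, their number $N$, and the last piece were independent.

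\textbf{Correction for the second claim.} In the paper, ``discretization'' means the embedded jump chain, not the time-$1$ skeleton you use. The jump chain is what controls $N(\tau)$, the number of walk jumps in a regeneration period. Your bound on $\tau$ gives this version immediately. On a cycle from $0$ back to $0$ the chain makes $U$ up-jumps and $LU$ down-jumps, so $T=(L+1)U$, where $U$ is the number of rate-$\alpha$ arrivals in $(0,\tau]$. Then $\mathbb{P}(U\ge m)\le\mathbb{P}(\tau>m/(2\alpha))+\mathbb{P}(\mathrm{Poisson}(m/2)\ge m)$, which decays exponentially. Alternatively, for the jump chain $\mathbb{E}_i[e^{\theta A_1}]e^{-\theta i}=(\alpha e^{\theta L}+i\mu e^{-\theta})/(\alpha+i\mu)\to e^{-\theta}<1$, so your trials argument runs verbatim.

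\textbf{Typo in your skeleton estimate.} The binomial factor should be $(1+e^{-\mu}(e^{\theta}-1))^i$, not $1+ie^{-\mu}(e^{\theta}-1)$. The geometric drift then comes from $\bigl(e^{-\theta}(1+e^{-\mu}(e^{\theta}-1))\bigr)^i\to 0$.
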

We can now use the regeneration times to show the existence of the speed and get a useful expression for it.
\begin{lem}\label{regenerativSpeed}
    Let $\lambda\geq 0$, $\mu >0$ and let $(X_t)_{t\geq 0}$ be a CBRW on dynamical conductances with bias in direction $\ell$, then $v_{\ell}(\lambda,\mu) = \lim_{t \rightarrow \infty} \frac{X_t\cdot \ell}{t}$ exists $\P^{\lambda,\mu}$-a.s. and 
    \[v_{\ell}(\lambda,\mu) = \frac{\E^{\lambda,\mu}[X_{\tau_1}\cdot \ell]}{\E^{\lambda,\mu}[\tau_1]} \quad \P^{\lambda,\mu}\text{-a.s.}\]
\end{lem}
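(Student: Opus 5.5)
The plan is the standard renewal-reward argument built on the regeneration structure of \eqref{eqn:RegTimes}. By the remark after \eqref{eqn:RegTimes}, the pairs $(\tau_{i+1}-\tau_i, X_{\tau_{i+1}}-X_{\tau_i})_{i\ge 0}$ are i.i.d. under $\P^{\lambda,\mu}$. At a regeneration time the infected set is empty, so every edge the walk has seen has been refreshed. Hence the environment seen from $X_{\tau_i}$ is again distributed according to $Q$ and is independent of the past. One point to check is that the first increment has the same law as the later ones: we start with $I_0=\varnothing$ and the initial configuration is distributed as $Q$, so this holds with no delay.

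\textbf{Integrability.} By Lemma \ref{lemma:ExpTail}, $\E[\tau_1]<\infty$; moreover $\E[\tau_1]>0$, since $\tau_1$ exceeds the first jump time. The walk moves by nearest-neighbour steps, so $|X_{\tau_1}\cdot\ell|\le \|X_{\tau_1}\|_1\le N(\tau)$. Since $N(\tau)$ has exponential moments, $\E[|X_{\tau_1}\cdot\ell|]<\infty$.

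\textbf{Limit along regeneration times.} The strong law of large numbers gives, almost surely,
\[
\frac{\tau_n}{n}\to \E[\tau_1], \qquad \frac{X_{\tau_n}\cdot\ell}{n}\to \E[X_{\tau_1}\cdot\ell].
\]
Taking the quotient, $X_{\tau_n}\cdot\ell/\tau_n \to \E[X_{\tau_1}\cdot\ell]/\E[\tau_1]$ almost surely.

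\textbf{Interpolation.} For $t>0$ let $k(t)=\max\{n:\tau_n\le t\}$; then $k(t)\to\infty$ and $k(t)/t\to 1/\E[\tau_1]$ almost surely. We have
\[
|X_t\cdot\ell - X_{\tau_{k(t)}}\cdot\ell| \le N([\tau_{k(t)},\tau_{k(t)+1}]),
\]
where $N$ here denotes the number of jumps in the whole regeneration interval $[\tau_{k(t)},\tau_{k(t)+1}]$. This is not the paper's notation $N([\tau_n,\tau_n+1])$, which counts jumps over a unit-length window. The variables $M_n:=N([\tau_n,\tau_{n+1}])$ are i.i.d. with the law of $N(\tau)$ and have finite mean. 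Borel--Cantelli then gives $M_n/n\to 0$ almost surely: $\sum_n \P(M_1>\varepsilon n)\le \E[M_1]/\varepsilon<\infty$. Hence the error divided by $t$ vanishes, and the identity
\[
\frac{X_{\tau_{k(t)}}\cdot\ell}{t}=\frac{X_{\tau_{k(t)}}\cdot\ell}{k(t)}\cdot\frac{k(t)}{t}
\]
gives the claimed limit $v_\ell(\lambda,\mu)$. This limit is deterministic, which is why the formula holds $\P^{\lambda,\mu}$-a.s.

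\textbf{Main obstacle.} The one nonroutine point is the justification that the regeneration increments are genuinely i.i.d. with no delay under the annealed law. This requires arguing that, at $\tau_i$, the future of the walk together with the environment it will encounter is independent of the past and has the same law as at time $0$. This follows from the graphical construction of the infected set: every edge visited before $\tau_i$ has had its copy $e_{i,1}$ removed, hence resampled. Unvisited edges remain i.i.d. $Q$ by the independence of the Poisson refreshings. I will take this as given from the remark after \eqref{eqn:RegTimes}.
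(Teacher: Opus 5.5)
Your proof is correct and follows essentially the same route as the paper: you decompose at the last regeneration time $\tau_{k(t)}$, use the renewal limit $k(t)/t\to 1/\E^{\lambda,\mu}[\tau_1]$ and the strong law of large numbers for the i.i.d.\ increments $X_{\tau_{i+1}}-X_{\tau_i}$, and control the jump count of the current regeneration interval by Borel--Cantelli. The only difference is cosmetic. You use the first-moment bound $\sum_n \P(M_1>\varepsilon n)\le \E[M_1]/\varepsilon$, where the paper uses Chebyshev with $\E^{\lambda,\mu}[N(\tau)^2]$; your extra step computing $X_{\tau_n}\cdot\ell/\tau_n$ is not needed for the conclusion.
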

\begin{proof} 
    Let us define $n(t) = \sup\{n \geq 0 : \tau_n \leq t\}$ for $t \geq 0$. We write $N(\tau_k,\tau_{k+1})$ for the number of jumps the random walk does between in the time interval $[\tau_k,\tau_{k+1}]$ and $N(\tau) := N(\tau_0,\tau_{1})$. \\
    We can then decompose $\frac{X_t \cdot \ell}{t}$ in the following way:
    \begin{eqnarray*}
        \frac{X_t \cdot \ell}{t} = \frac{n(t)}{t} \frac{X_{\tau_{n(t)}} \cdot \ell}{n(t)} + \frac{\left(X_t - X_{\tau_{n(t)}}\right) \cdot \ell}{t}
    \end{eqnarray*}
    First we study the term $\frac{\left(X_t - X_{\tau_{n(t)}}\right) \cdot \ell}{t}$. Note that for all $t \geq 0$ we have that \[\frac{\left(X_t - X_{\tau_{n(t)}}\right) \cdot \ell}{t} \leq \frac{N(\tau_{n(t)},\tau_{n(t)+1})}{t} \leq \frac{N(\tau_{n(t)},\tau_{n(t)+1})}{n(t)}.\]
    But for all $\varepsilon >0$ and for $k \in \N$ we have that 
    \begin{eqnarray*}
        \P^{\lambda,\mu}\left(\frac{N(\tau_{k},\tau_{k+1})}{k} \geq \varepsilon\right) \leq \frac{\E^{\lambda, \mu}[N(\tau)^2]}{\varepsilon^2 k^2}
    \end{eqnarray*}
    This quantity is summable so by Borel-Cantelli we have that 
    \[\lim_{t \to \infty} \frac{\left(X_t - X_{\tau_{n(t)}}\right) \cdot \ell}{t} = 0 \quad \P^{\lambda, \mu}-\text{a.s.} \]
    Next we note that the process $n(t)$ is a renewal process. This implies that 
    \begin{eqnarray*}
        \lim_{t\to \infty} \frac{n(t)}{t} = \E^{\lambda, \mu}[\tau_1]^{-1} \quad \P^{\lambda, \mu}-\text{a.s.}
    \end{eqnarray*}
    Finally, using that the regeneration times splits the process in i.i.d bits we get by the SLLN that 
    \begin{eqnarray*}
        \lim_{t\to \infty} \frac{X_{\tau_{n(t)}}\cdot \ell }{n(t)} = \E[X_{\tau_1}\cdot \ell] \quad \P^{\lambda, \mu}-\text{a.s.}
    \end{eqnarray*}
    Everything together yields the result.
\end{proof}

\begin{rmk}
	It is not possible to write the CBRW as a time change of the variable speed biased random walk on dynamical conductances (VBRW). Hence, we cannot deduce results about CBRW from the work of the first author \cite{couillard}. For completeness, we show this fact in Appendix \ref{sect:Counter}.
\end{rmk}
\begin{rmk}
    For $d=1$, $\lambda_1 >\lambda_2 >0$ it is possible to let evolve two biased random walks $(X^{\lambda_1}_t)_{t \geq 0}$ and $(X^{\lambda_2}_t)_{t \geq 0}$ on the same environment with the same jump times, such that for all $t \geq 0$, $X^{\lambda_2}_t \leq X^{\lambda_1}_t$ a.s. and $\P(X^{\lambda_2}_t < X^{\lambda_1}_t) >0$. This implies the strict monotonicity of the speed as function of the bias, and so it also implies the positivity of the speed in dimension $d=1$.
\end{rmk}

\section{Positivity of the speed}
\subsection{The speed is positive for $\lambda$ large enough}
\begin{thm}\label{thm:FirstMain}
    Let $q$ be a measure on $(0,+\infty)$ and $\mu >0$. There exists $\lambda_0 = \lambda_0(\mu, q, \ell)$ such that for all $\lambda \ge \lambda_0$ one has
    \begin{equation*}
        v_{\ell}(\lambda,\mu) >0.
    \end{equation*}
\end{thm}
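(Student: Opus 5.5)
By Lemma \ref{regenerativSpeed} it suffices to show $\E^{\lambda,\mu}[X_{\tau_1}\cdot\ell]>0$ for $\lambda$ large, since $\E[\tau_1]<\infty$ by Lemma \ref{lemma:ExpTail}. Equivalently, I would show that the drift per jump is uniformly positive except on an event of small probability, and control that event using the exponential moments of $N(\tau)$. The key observation is that when $\lambda$ is large, the jump probability from $x$ concentrates on the edges $x+e_i$ with $e_i\cdot\ell$ maximal, unless the conductances are very unbalanced. Write $m=e_1\cdot\ell\ge d^{-1/2}$. Fix $\delta>0$ small and $M$ large such that $q([M^{-1},M])\ge 1-\delta$.

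At a jump time, the walk at $x$ moves to $x+e_1$ with probability at least
\begin{equation*}
\frac{e^{\lambda m}\omega(\{x,x+e_1\})}{e^{\lambda m}\omega(\{x,x+e_1\})+(2d-1)e^{\lambda m}\max_i\omega(\{x,x+e_i\})}.
\end{equation*}
This bound is too weak, because other edges $e_j$ with $e_j\cdot\ell=m$ may carry comparable weight. A cleaner approach is to split the $2d$ directions into those with $e_i\cdot \ell>0$ (``forward'') and those with $e_i\cdot\ell<0$ (``backward''). Let $G_x$ be the event that the conductances of all $2d$ edges at $x$ lie in $[M^{-1},M]$. On $G_x$ the probability of a backward step is at most $M^2 e^{-\lambda m}\cdot 2d$, and there is a forward step of length at least $m$ with probability at least $\frac12$ for $\lambda$ large. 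Note that for directions orthogonal to $\ell$, the contribution to $X\cdot\ell$ is $0$. Whatever the environment, each jump changes $X\cdot\ell$ by at most $1$ in absolute value.

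Then decompose
\begin{equation*}
X_{\tau_1}\cdot\ell=\sum_{k=1}^{N(\tau)}\Delta_k ,
\end{equation*}
where $\Delta_k$ is the $\ell$-increment of the $k$-th jump. Using the conditional law of $\Delta_k$ given the past and the current environment, we get
\begin{equation*}
\E[X_{\tau_1}\cdot\ell]\ge \E\Big[\sum_{k\le N(\tau)}\big(c_1\mathbf 1_{G_{X_{T_k^-}}}-\mathbf 1_{G^c_{X_{T_k^-}}}-2dM^2e^{-\lambda m}\big)\Big],
\end{equation*}
with $c_1>0$ depending only on $m$. Here the sum is a sum of predictable compensators, and $N(\tau)$ is not a stopping time for the jump filtration in an obvious way. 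I would handle this by noting that the event $\{k\le N(\tau)\}=\{T_k\le\tau_1\}$ is determined before the $k$-th jump direction is chosen. Indeed, $I$ stays nonempty at $T_k^-$ unless regeneration already occurred, so the martingale (optional stopping) argument applies, with integrability from Lemma \ref{lemma:ExpTail}.

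\textbf{Main obstacle.} The hardest step is bounding the bad term $\E[\sum_{k\le N(\tau)}\mathbf 1_{G^c_{X_{T_k^-}}}]$ by something like $C\delta^{1/2}$, uniformly in $\lambda$. The difficulty is that the environment seen by the walk is not distributed as $Q$, since the walk may prefer edges with bad conductances. I would try a Cauchy–Schwarz approach:
\begin{equation*}
\E\Big[\sum_{k}\mathbf 1_{k\le N(\tau)}\mathbf 1_{G^c}\Big]\le \sum_k \P(N(\tau)\ge k)^{1/2}\,\P(\text{$G^c$ at the $k$-th jump})^{1/2}.
\end{equation*}
For the second factor, I would bound $\P(G^c$ at jump $k)$ by the probability that some edge within distance $k$ of the origin, among at most $C k^d$ edges, is bad in one of finitely many refresh epochs. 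This gives only a crude bound $Ck^{d+1}\delta$, which is still fine because $\P(N(\tau)\ge k)$ decays exponentially. The subtlety is that the law of $N(\tau)$ itself depends on $\lambda$. Here I would check that the proof of Lemma \ref{deathbirthchain} gives constants depending only on $\mu$, since the infection dynamics is dominated by the chain with $\alpha=1$, $L=2d$ regardless of $\lambda$.

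Finally, choose $\delta$ so small that the bad term is below $c_1\E[N(\tau)]/4$, noting that $\E[N(\tau)]\ge 1$. Then choose $\lambda_0$ so that $2dM^2e^{-\lambda m}<c_1/4$. For all $\lambda\ge\lambda_0$ this yields $\E[X_{\tau_1}\cdot\ell]>0$, and hence $v_\ell(\lambda,\mu)>0$.
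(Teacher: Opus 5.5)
Your proposal is correct and follows essentially the paper's route. Both arguments reduce to $\E^{\lambda,\mu}[X_{\tau_1}\cdot\ell]>0$, truncate the conductances to a compact interval of nearly full $q$-mass, use that the per-jump drift is uniformly positive on good environments once $\lambda$ is large, and absorb the bad contribution using the $\lambda$-independent tails of $N(\tau)$ together with Cauchy--Schwarz.

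The difference is bookkeeping. The paper uses one global event $E_\kappa$ (all edges seen before $\tau_1$ are good) and asserts $\E^{\lambda,\mu}[X_{\tau_1}\cdot\ell\,\mathds{1}_{E_\kappa}]\ge\alpha\,\E^{\lambda,\mu}[N(\tau)]$ directly. You instead use a per-jump compensator identity, valid because $\{T_k\le\tau_1\}$ does not depend on the $k$-th jump direction, and a space--time union bound for $\P(G^c \text{ at jump } k)$. This handles more carefully both the future-dependence of $E_\kappa$ and the bias of the environment seen by the walk, which the paper glosses over.
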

\begin{proof}
    It is enough to check that for $\lambda$ large enough $\E^{\lambda, \mu}[X_{\tau} \cdot \ell] > 0$. Let $E_{\kappa}$ be the following event \[E_{\kappa} = \{ \text{ all edges seen up to $\tau_1$ are in } [\kappa^{-1},\kappa]\}.\]
    We write $\mathcal{R} = \{e_i: i \in \{1,...,2d\}, \, e_i \cdot \ell > 0\}$ and $r = \min \{l \cdot e: e \in \mathcal{R}\}$. For $\kappa > 1$, we denote
    \[\alpha = \inf \left\{\frac{\sum_{\tilde{e} \in \mathcal{R}} e^{\lambda(\tilde{e} \cdot \ell)} (\tilde{e} \cdot \ell) c_{\tilde{e}} }{\sum_{\tilde{e} \in \mathcal{R}} e^{\lambda(\tilde{e} \cdot \ell)} c_{\tilde{e}} + \kappa \sum_{\tilde{e} \notin \mathcal{R}} e^{\lambda(\tilde{e} \cdot \ell)} } - \kappa^2 \sum_{\tilde{e} \in \mathcal{R}} (\tilde{e} \cdot \ell) e^{-\lambda(\tilde{e} \cdot \ell)} : c_{\tilde{e}} \in [\kappa^{-1},\kappa] \text{ for all } \tilde{e} \in \mathcal{R}\right\},\]
    and we have
    \begin{eqnarray*}
        \E^{\lambda,\mu}[X_{\tau_1} \cdot \ell] &=& \E^{\lambda,\mu}[X_{\tau_1} \cdot \ell \mathds{1}_{E_{\kappa}}] + \E^{\lambda,\mu}[X_{\tau_1} \cdot \ell \mathds{1}_{E_{\kappa}^c}]\\
        &\geq& \alpha\E^{\lambda,\mu}[N(\tau)] - \E^{\lambda,\mu}[N(\tau) \mathds{1}_{E_{\kappa}^c}] \\
        &\geq& \alpha \E^{\lambda,\mu}[N(\tau)] - 2d q([\kappa^{-1},\kappa]^c)^{1/2}\E^{\lambda,\mu}[N(\tau)^2 ]^{1/2}.
    \end{eqnarray*} 
    We can fix $\kappa>1$ large enough such that 
    \begin{eqnarray*}
        2d q([\kappa^{-1},\kappa]^c)^{1/2}\E^{\lambda,\mu}[N(\tau)^2 ]^{1/2} \leq r/4
    \end{eqnarray*}
    Then using the following lower bound for $c \in [\kappa^{-1},\kappa]$
    \begin{eqnarray*}
        \frac{\sum_{\tilde{e} \in \mathcal{R}} e^{\lambda(\tilde{e} \cdot \ell)} (\tilde{e} \cdot \ell) c_{\tilde{e}} }{\sum_{\tilde{e} \in \mathcal{R}} e^{\lambda(\tilde{e} \cdot \ell)} c_{\tilde{e}} + \kappa \sum_{\tilde{e} \notin \mathcal{R}} e^{\lambda(\tilde{e} \cdot \ell)} } &\geq&  \frac{r }{1 + \kappa  \frac{\sum_{\tilde{e} \notin \mathcal{R}} e^{\lambda(\tilde{e} \cdot \ell)} }{\sum_{\tilde{e} \in \mathcal{R}} e^{\lambda(\tilde{e} \cdot \ell)} c_{\tilde{e}}}}\geq \frac{r }{1 + \kappa^2  \frac{\sum_{\tilde{e} \notin \mathcal{R}} e^{\lambda(\tilde{e} \cdot \ell)} }{\sum_{\tilde{e} \in \mathcal{R}} e^{\lambda(\tilde{e} \cdot \ell)}}} \\
    \end{eqnarray*}
    we can lower bound the infimum
    \begin{eqnarray*}
        \inf \left\{\frac{e^{\lambda} c - e^{-\lambda} \kappa}{e^{\lambda} c + (2d-2+e^{-\lambda}) \kappa} : c \in [\kappa^{-1},\kappa]\right\} \E^{\lambda}[N(\tau)] &\geq&  \frac{r}{1 + \kappa^2  \frac{\sum_{\tilde{e} \notin \mathcal{R}} e^{\lambda(\tilde{e} \cdot \ell)} }{\sum_{\tilde{e} \in \mathcal{R}} e^{\lambda(\tilde{e} \cdot \ell)}}} - e^{-\lambda} \kappa^2.
    \end{eqnarray*}
    This means that for $\lambda $ large enough we have that 
    \begin{eqnarray*}
        \alpha &\geq& 3r/4,
    \end{eqnarray*}
    and so for $\lambda$ large enough we obtain 
    \begin{eqnarray*}
        \E^{\lambda,\mu}[X_{\tau_1} \cdot e_1] \geq r/2 > 0.
    \end{eqnarray*}
\end{proof}
If we assume that the measure $q$ is uniform elliptic, then we can get the asymptotic monotonicity of the speed for large $\lambda$. We follow a well-known strategy introduced in \cite{BAFS} and implemented for RWRC in static environment in \cite{BergerGantertNagel}.
\begin{prop} \label{prop:Asympt}
    Let $q$ be a measure on $[\kappa^{-1},\kappa]$ then  there is $\Lambda >0$ large enough such that for all $\lambda_1 \geq \lambda_2 \geq \Lambda$ and for all $\mu>0$ \[ v_{\ell}(\lambda_1, \mu) \geq v_{\ell}(\lambda_2, \mu).\]
\end{prop}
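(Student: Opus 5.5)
The plan is to show that $\lambda \mapsto v_\ell(\lambda,\mu)$ is differentiable on $[\Lambda,\infty)$ with nonnegative derivative, uniformly in $\mu>0$, following the strategy of \cite{BAFS,BergerGantertNagel}. By Lemma \ref{regenerativSpeed},
\[
v_\ell(\lambda,\mu)=\frac{\E^{\lambda,\mu}[X_{\tau_1}\cdot \ell]}{\E^{\lambda,\mu}[\tau_1]}.
\]
A first observation is that the law of the regeneration structure does not depend on $\lambda$. The infected set only grows at jump times of $\mathcal{P}$, by a bounded number of copies, and it shrinks at rate $\mu|I_t|$. Hence $\tau_1$ and $N(\tau)$ have a law independent of $\lambda$ (and of the path), and the denominator is constant in $\lambda$. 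It therefore suffices to show that $f(\lambda)=\E^{\lambda,\mu}[X_{\tau_1}\cdot\ell]$ is nondecreasing for $\lambda\ge\Lambda$.

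Next, write $X_{\tau_1}\cdot\ell=\sum_{k=1}^{N(\tau)}\Delta_k\cdot \ell$, where $\Delta_k$ is the $k$-th jump direction. Conditionally on the environment evolution and the clocks, the walk up to $\tau_1$ is a finite Markov chain of length $N(\tau)$ with transition probabilities $p^{\lambda,\omega}_{T_k}$. I would differentiate under the expectation using the likelihood ratio (Girsanov-type) formula
\[
\partial_\lambda f(\lambda)=\E^{\lambda,\mu}\Big[X_{\tau_1}\cdot\ell\ \sum_{k=1}^{N(\tau)} \partial_\lambda \log p^{\lambda,\omega}_{T_k}(X_{T_{k-1}},X_{T_k})\Big].
\]
This is justified by uniform ellipticity: $|\partial_\lambda\log p|\le 2$, $|X_{\tau_1}\cdot\ell|\le N(\tau)$, and Lemma \ref{lemma:ExpTail} gives exponential moments of $N(\tau)$. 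A direct computation gives
\[
\partial_\lambda \log p(x,x+e)=e\cdot\ell-\sum_{e'} p(x,x+e')\,e'\cdot\ell=e\cdot \ell - m_k,
\]
where $m_k$ is the local drift. The centered increments $\Delta_k\cdot\ell-m_k$ are martingale differences $M_k$. Hence
\[
\partial_\lambda f=\E\Big[\Big(\sum_k m_k+\sum_k M_k\Big)\sum_k M_k\Big]=\E\Big[\sum_k M_k^2\Big]+\E\Big[\sum_k m_k \sum_j M_j\Big].
\]
Here one has to use that $N(\tau)$ is independent of the jump choices, which makes the cross terms in the martingale sum vanish.

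For large $\lambda$ the plan is to estimate the two terms as follows. In directions with $e\cdot\ell>0$ the tilted conductances dominate, by ellipticity with constant $\kappa$. The walk then steps, with probability at least $1-Ce^{-c\lambda}$, only along the maximizing directions in $\mathcal R$; in the generic case where $e_1\cdot\ell$ is strictly largest, this is $e_1$. Thus $M_k^2$ is of order $e^{-c\lambda}$ and $m_k$ is nearly deterministic. The difficulty, and the main obstacle, is the cross term $\E[\sum_k m_k\sum_{j<k}M_j]$: a past deviation $M_j$ influences $m_k$ only because the walk then sits at a different location with different conductances. Both $\partial_\lambda m_k$ and the effect of the deviation on $m_k$ are $O(e^{-c\lambda})$, so the cross term is $O(\E[N(\tau)^2]e^{-2c\lambda})$. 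Meanwhile $\E[\sum M_k^2]\ge c'\E[N(\tau)]e^{-c\lambda}$, since each step has a probability of order $e^{-c\lambda}$ of going backward. Comparing the exponents, i.e.\ that the backward-step variance decays like $e^{-\lambda(e_1\cdot\ell+e_1\cdot\ell)}$ versus the product decay of the correlations, is precisely the delicate bookkeeping of \cite{BergerGantertNagel}, and I would follow their expansion step by step.

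Uniformity in $\mu$ needs care, because $\E[N(\tau)^2]/\E[N(\tau)]$ grows as $\mu\to0$. I would try to avoid this by not regenerating at all in the derivative computation. Instead, I would write $v_\ell=\lim_t t^{-1}\E[X_t\cdot\ell]$ and differentiate over fixed time windows $[0,t]$. Since the same martingale structure holds with $N(\tau)$ replaced by $\mathcal{P}_t$, this should give per-unit-time bounds whose constants depend only on $\kappa$ and $d$. If that fails, I would settle for $\Lambda$ uniform on compacts in $\mu$.
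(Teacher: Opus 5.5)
Your route differs from the paper's. The paper couples the walks with biases $\lambda<\lambda+\varepsilon$ through common uniforms $U_n$ and compares them over super-regeneration periods. You instead differentiate $\mathbb{E}^{\lambda,\mu}[X_{\tau_1}\cdot\ell]$ by a likelihood ratio over the temporal regeneration periods. For fixed $\mu$ your argument closes, and more simply than you expect, but one estimate must be corrected. Let $\bar m=\max_i e_i\cdot\ell$ and let $\delta>0$ be the gap between the largest and second-largest values of $e_i\cdot\ell$. Then $|m_k-\bar m|\le Ce^{-\lambda\delta}$, $\mathbb{E}[|M_j|\mid\cdot\,]\le Ce^{-\lambda\delta}$ and $\mathbb{E}[M_k^2\mid\cdot\,]\ge ce^{-\lambda\delta}$.

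The variance lower bound comes from steps in a second-best direction, not from backward steps. If you only credit backward steps, as your last estimate does, then for $\ell=e_1$, $d\ge2$ you would compare $e^{-2\lambda}$ with $e^{-2\lambda}$ and conclude nothing. Since $\mathbb{E}[M_j\mathds{1}\{k\le N(\tau)\}]=0$, the cross term equals $\sum_{j<k}\mathbb{E}[M_j(m_k-\bar m)\mathds{1}\{k\le N(\tau)\}]$. Hence
\[ f'(\lambda)\ \ge\ c\,\mathbb{E}[N(\tau)]\,e^{-\lambda\delta}-C\,\mathbb{E}[N(\tau)^2]\,e^{-2\lambda\delta}, \]
and no further bookkeeping is needed.

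The genuine gap is uniformity in $\mu$, which is part of the statement. The bound above requires $e^{-\lambda\delta}\lesssim \mathbb{E}[N(\tau)]/\mathbb{E}[N(\tau)^2]$, and this ratio degenerates as $\mu\to0$. Indeed, $|I_t|$ gains $2d$ copies per jump and loses each at rate $\mu$, so it hovers near $2d/\mu$ and takes a time exponentially large in $1/\mu$ to empty. Hence $\Lambda(\mu)\to\infty$.

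Your proposed repair fails as written. The same crude estimate on a window $[0,t]$ gives a cross term of order $\mathbb{E}[\mathcal{P}_t^2]e^{-2\lambda\delta}\asymp t^2e^{-2\lambda\delta}$ against $t\,e^{-\lambda\delta}$, which is useless per unit time. Either version would need a decorrelation bound $\sum_{k>j}|\mathrm{Cov}(M_j,m_k)|\le C(\kappa,d,\ell)\,e^{-2\lambda\delta}$, uniform in $\mu$ including the quasi-static regime where the walk remembers what it has seen. The window version would also need to justify exchanging $\partial_\lambda$ with $\lim_{t\to\infty}$. Neither is provided, and the fallback ``uniform on compacts'' proves a weaker statement.

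The missing idea is a regeneration structure whose law does not depend on $\mu$. The paper gets it from the bound $p^{\omega,\lambda}_t(\mathbf f)\ge p^{\lambda}(\mathbf f)$, uniform in $\omega$ and $t$. The one-dimensional walk $Y_n$ built from $\{U_n\le p^{\lambda_s}(\mathbf f)\}$ then has a law depending only on $(\lambda_s,\kappa,d,\ell)$. Its super-regeneration times $\tilde\tau_k$ regenerate every coupled walk with bias at least $\lambda_s$, for every $\mu$, because the walk never revisits edges behind it. As a result, both $\mathbb{E}[\tilde\tau_1\mid R]$ and the comparison of $\mathbb{P}(D_1\mid R)$ with $\sum_{k\ge2}(k-1)\mathbb{P}(D_k\mid R)$ depend only on the law of $Y$.

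You cannot simply rerun your likelihood-ratio computation on $[0,\tilde\tau_1]$. The time $\tilde\tau_1$ is not a stopping time and is a function of the same uniforms that select the steps. So neither the change-of-measure formula nor the vanishing of the martingale cross terms applies as stated, which is why the comparison there is done by coupling.
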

\begin{proof}
    We adapt the proof as in \cite[Section 2]{BergerGantertNagel} to a general bias direction $\ell$. 
    For any $x \in \mathbb{Z}^d$, $t \ge 0$ and $\omega \in \Omega$, let $q_t^{\omega, \lambda}(x, 0) = 0$ and
    \begin{equation*}
    	q_t^{\omega, \lambda}(x, k) \coloneqq \sum_{j = 1}^k p_t^{\omega, \lambda}(x, x+e_j).
    \end{equation*}
    For any pair of environments $\{\omega^{(1)}_t\}_{t \ge 0}$ and $\{\omega^{(2)}_t\}_{t \ge 0}$ and biases $0<\lambda_1 \le \lambda_2$ one can construct the random walks $X^{(1)}$ and $X^{(2)}$ with the same exponential clocks. Let $\{U_n\}_{n \ge 1}$ be a family of i.i.d.\ uniform R.V.'s on $[0, 1]$. We define the coupled jump rules by
    \begin{equation*}
    	X^{(i)}_{n+1} -X^{(i)}_{n} = e_k \qquad \text{iff} \qquad q_{T_n}^{\omega^{(i)}, \lambda_i}(X_n, k-1) < U_n \le q_{T_n}^{\omega^{(i)}, \lambda_i}(X_n, k).
    \end{equation*}
Let us define the ``forward'' set $\mathbf{f}$ as the set of maximizers of $e_i \cdot \ell$ and the ``backward'' set $\mathbf{b} \coloneqq \{1, \dots, 2d\}\setminus \mathbf{f}$. Then we observe that the probability to stay in the forward set, since there exists $\delta>0$ such that $e_1 \cdot \ell - e_{|\mathbf{f}| + 1}\cdot \ell = \delta > 0$, is
\begin{align*}
	p_t^{\omega, \lambda}(\mathbf{f}) &\coloneqq \sum_{k \in \mathbf{f}} \frac{e^{\lambda e_1 \cdot \ell} \omega_t(\{x, x+e_k\})}{\sum_{k \in \mathbf{f}} \omega_t(\{x, x+e_k\})e^{\lambda e_1 \cdot \ell} + \sum_{k \in \mathbf{b}} \omega_t(\{x, x+e_k\}) e^{\lambda e_k \cdot \ell} } \\ &\ge \frac{e^{\lambda e_1 \cdot \ell} |\mathbf{f}|}{|\mathbf{f}| e^{\lambda e_1 \cdot \ell} + |\mathbf{b}| \kappa^2 e^{e_{|\mathbf{f}| + 1}\cdot \ell} } \\
	&\ge \frac{e^{\lambda \delta} |\mathbf{f}|}{|\mathbf{f}| e^{\lambda \delta} + |\mathbf{b}| \kappa^2 } \eqqcolon p^{\lambda}(\mathbf{f}).
\end{align*}
From this last expression it is clear that $p^{\lambda}(\mathbf{f}) \to 1$ sending $\lambda \to \infty$, independently of $\omega$ and $t>0$.

We now introduce the one dimensional process
\begin{equation*}
	Y_n \coloneqq \sum_{k = 0}^{n - 1}\left(2\mathds{1}\{U_k \le p^{\lambda_s}(\mathbf{f}) \} - 1\right). 
\end{equation*}
Note that if $\lambda_s$ is chosen such that $p^{\lambda_s}(\mathbf{f}) > 1/2$ then $\{Y_n\}_{n \ge 0}$ is a random walk with drift to the right. Moreover, whenever $Y_n$ jumps to the right (and $\lambda_s < \lambda_i$), then the random walk $X^{(i)}_n$ moves in some direction inside $\mathbf{f}$.

This allows us to consider the notion of \emph{super}-regeneration times $\tilde{\tau}_k, k \ge 0$ with $\tau_0 = 0$ and 
\begin{equation*}
	\tilde{\tau}_1 \coloneqq \inf\{ n \ge 2 \colon \max_{k < n-1} Y_k < Y_{n-1} < Y_n < \max_{k > n} Y_k\},
\end{equation*}
and inductively $\tilde{\tau}_{k + 1} = \tilde{\tau}_k + \tilde{\tau}_1 \circ \theta_{\tilde{\tau}_k}$, where $\theta_k$ denotes the canonical time shift by $k$ units of time.

We now observe that the increments of $Y$ times the absolute constant $f = e_1 \cdot \ell$ lower bound the increments of $X$ in the direction $\ell$. Indeed, the positive one coincide and the largest negative increment that $X\cdot \ell$ can take is $-e_1 \cdot \ell = -f$. Hence, we get that 
\begin{equation*}
	\max_{k < \tilde{\tau}_n-1} X_k\cdot \ell < X_{\tilde{\tau}_n-1}\cdot \ell < X_{\tilde{\tau}_n}\cdot \ell < \max_{k > n} X_{\tilde{\tau}_n}\cdot \ell.
\end{equation*}
Observe that the additional step to the right allows the decoupling of the environment between regeneration ``slabs''.

By established arguments (see e.g.\ \cite{SznitmanPerc}), the random variables $\{X_{\tilde{\tau}_k+1} - X_{\tilde{\tau}_k}, \tilde{\tau}_{k+1} -\tilde{\tau}_k\}_{k \ge 0}$ are independent and $\{X_{\tilde{\tau}_k+1} - X_{\tilde{\tau}_k}, \tilde{\tau}_{k+1} -\tilde{\tau}_k\}_{k \ge 1}$ are i.i.d.\ under $\mathbb{P}^{\lambda, \mu}$ and distributed as $X_{\tilde{\tau}_1}, \tilde{\tau}_1$ conditioned on the event $R \coloneqq \{Y_{n}>0, \text{ for all }n \ge 1\}$.

This allows us to find the formula for the speed
\begin{equation*}
	v(\lambda) = \frac{\mathbb{E}^{\lambda, \mu}[X_{\tilde{\tau}_1}\cdot \ell \mid R]}{\mathbb{E}^{\lambda, \mu}[\tilde{\tau}_1 \mid R]}.
\end{equation*}
We remark that once the parameters of the model are specified $\mathbb{E}^{\lambda, \mu}[\tilde{\tau}_1 \mid R]$ is explicitly computable (being a standard one-dimensional random walk computation). By the observations made above about building two walks with the same regeneration structure it is sufficient to show that 
\begin{equation*}
	\mathbb{E}\left[X^{(2)}_{\tilde{\tau}_1} - X^{(1)}_{\tilde{\tau}_1} \mid R\right]>0,
\end{equation*}
with $\lambda_1 = \lambda < \lambda + \varepsilon = \lambda_2$ and evolving in the same environment.

We are now in position to employ the arguments of \cite{BAFS}. We make the following observations:
\begin{enumerate}
	\item When $Y_n$ moves to the right both $X^{(2)}_{n}, X^{(1)}_{n}$ move in a direction in $\mathbf{f}$.
	\item When $Y_n$ moves to the left for the first time, then
	\begin{equation*}
		X^{(2)}_{n}\cdot \ell - X^{(1)}_{n}\cdot \ell \ge 0,
	\end{equation*}
as the jump happens on the same site. More strongly, with strictly positive probability $p_0>0$ we have
\begin{equation*}
	X^{(2)}_{\tilde{\tau}_1}\cdot \ell - X^{(1)}_{\tilde{\tau}_1}\cdot \ell \ge \delta.
\end{equation*}
\item If at time $\tilde{\tau}_1$ the random walk $Y_n$ has taken $k$ steps to the left then $X^{(2)}_{n}$ and $X^{(1)}_{n}$ take at most $k$ different steps.
\item If at time $\tilde{\tau}_1$ the random walk $Y_n$ has taken $k$ steps to the left then
\begin{equation*}
	X^{(2)}_{\tilde{\tau}_1}\cdot \ell - X^{(1)}_{\tilde{\tau}_1}\cdot \ell \ge - 2(k-1) f.
\end{equation*}
\item Putting these observations together, letting $\{D_k\}_{k \ge 0}$ be the event that $Y$ has taken exactly $k$ steps to the left before $\tilde{\tau}_1$ then
\begin{equation*}
	\mathbb{E}\left[X^{(2)}_{\tilde{\tau}_1}\cdot \ell - X^{(1)}_{\tilde{\tau}_1}\cdot\right] \ge p_0 \delta \mathbb{P}(D_1 \mid R) - \sum_{k \ge 2} 2(k-1) f \mathbb{P}(D_k \mid R).
\end{equation*}
Following the steps of \cite{BAFS} one can show that for all $\lambda$ large enough and $\varepsilon$ small enough the quantity on the r.h.s.\ of the last display is strictly positive.
\end{enumerate}

\end{proof}

\begin{rmk}
    Proposition \ref{prop:Asympt} shows a fundamental difference in the behavior of the constant speed compared to the variable speed process. In the uniform elliptic setting, the asymptotics of the speed of the constant speed for $\lambda$ large is increasing, but the speed of the variable speed process can be asymptotically decreasing, see \cite{couillard}.
\end{rmk}

\subsection{The speed is positive for $\mu$ large enough}
\begin{thm}\label{thm:SecondMain}
    For any distribution $q$ on $(0, \infty)$, there exists $\mu_0 = \mu_0(\lambda, q)$ such that for all $\mu \ge \mu_0$ one has that the CBRW $(X_t)_{t\geq 0}$ with bias $\lambda$ in direction $\ell$
    \begin{equation*}
        \lim_{t \to \infty} \frac{X_t \cdot \ell}{t} >0.
    \end{equation*}
\end{thm}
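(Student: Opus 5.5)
By Lemma \ref{regenerativSpeed} it suffices to show that $\E^{\lambda,\mu}[X_{\tau_1}\cdot \ell] > 0$ for $\mu$ large. The heuristic is that as $\mu\to\infty$ the environment is refreshed between any two consecutive jumps of the walk. Each jump then sees a fresh i.i.d.\ sample from $q$. So the walk should behave like a random walk with i.i.d.\ increments whose mean drift is
\begin{equation*}
    d(\lambda) \coloneqq \sum_{i=1}^{2d} (e_i\cdot \ell)\, \mathbf{E}_Q\left[\frac{e^{\lambda e_i\cdot\ell}\omega(\{0,e_i\})}{W^{\lambda,\ell}(\omega,0)}\right].
\end{equation*}
This quantity is strictly positive for $\lambda>0$. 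To see this, pair each $e_i$ having $e_i\cdot\ell>0$ with $-e_i$. By exchangeability of the i.i.d.\ conductances under $Q$, the contribution of the pair is $(e_i\cdot\ell)\,\mathbf{E}_Q\big[(e^{\lambda e_i\cdot \ell}-e^{-\lambda e_i\cdot\ell})\,\omega(\{0,e_i\})/W\big]>0$, since the common denominator $W$ is symmetric under swapping the two conductances.

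\textbf{Step 1: structure of the regeneration.} Since $\mu\to\infty$, I would use a more convenient decomposition than comparing $\tau_1$ directly. Write $X_{\tau_1}\cdot\ell=\sum_{k=1}^{N(\tau)} \Delta_k$, where $\Delta_k$ is the $\ell$-component of the $k$-th jump. Let $G_k$ be the event that, between the $(k-1)$-th and $k$-th jump, every edge adjacent to the current site has been refreshed. In the infected-set construction this amounts to every copy $e_{i,1}$ attached to that site having been removed.

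\textbf{Step 2: conditional drift on $G_k$.} On $G_k$, the conductances used at the $k$-th jump are fresh samples from $q$, independent of the past. This gives $\E[\Delta_k\mid \mathcal F_{T_{k-1}},G_k]=d(\lambda)$. On $G_k^c$ we use the trivial bound $\Delta_k\ge -1$. Summing over $k$ by optional stopping, the quantity $\sum_{k\le N(\tau)}(\Delta_k - \E[\Delta_k\mid\mathcal F_{T_{k-1}}])$ has zero mean. Hence
\begin{equation*}
    \E[X_{\tau_1}\cdot\ell] \ge d(\lambda)\,\E[N(\tau)] - (1+d(\lambda))\,\E\Big[\sum_{k\le N(\tau)}\mathds 1_{G_k^c}\Big].
\end{equation*}
Note that $\E[N(\tau)]\ge 1$, because the first jump must occur before the infected set can become nonempty and then empty again.

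\textbf{Step 3: the error term.} I would bound $\P(G_k^c\mid\mathcal F_{T_{k-1}})$ by the probability that one of at most $2d$ relevant edges fails to refresh within an exponential(1) time. For an edge seen at the previous jump this is of order $1/\mu$. An edge refreshed earlier without being seen again is simply fresh, so it does not contribute. The relevant edges are those adjacent to the current site that were seen in the last few jumps and not yet refreshed. A union bound over jumps gives that each such edge survives one exponential(1) waiting time with probability $\le 1/(1+\mu)$. Therefore $\P(G_k^c\mid\mathcal F_{T_{k-1}})\le 2d\cdot C/\mu$, and the error term is at most $(C'/\mu)\,\E[N(\tau)]$. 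For $\mu\ge\mu_0$ the right-hand side of the display in Step 2 is at least $\tfrac12 d(\lambda)\E[N(\tau)]>0$.

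\textbf{Main obstacle.} The delicate point is making Step 3 rigorous. An edge adjacent to the current site could have been seen long ago and survived unrefreshed, and then its value is biased by the walk's past. I would handle this by bounding the probability that an edge survives since its last observation by $1/(1+\mu)$ per intervening jump (memorylessness). I would also keep the uniformity in $\mu$ of the constants in $\E[N(\tau)]$, which only becomes easier as $\mu$ grows, in view of Lemma \ref{lemma:ExpTail}. No integrability of $q$ is needed, since the transition probabilities are bounded.
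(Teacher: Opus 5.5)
Your route differs from the paper's and can be made to work, but two steps need repair.

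\textbf{Positivity of $d(\lambda)$.} The denominator $W$ is \emph{not} symmetric under swapping $\omega(\{0,e_i\})$ and $\omega(\{0,-e_i\})$, because they enter $W$ with different weights $e^{\pm\lambda e_i\cdot\ell}$. In general $\mathbf{E}_Q[\omega(\{0,e_i\})/W]\neq\mathbf{E}_Q[\omega(\{0,-e_i\})/W]$ (a two-point law $q$ already shows this), so your formula for the pair's contribution is false.

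The conclusion survives if you symmetrize the whole integrand instead. Set $A=e^{\lambda e_i\cdot\ell}>B=e^{-\lambda e_i\cdot\ell}$, $X=\omega(\{0,e_i\})$, $Y=\omega(\{0,-e_i\})$, and let $R\ge 0$ be the rest of $W$, which is independent of $(X,Y)$. Then
\begin{equation*}
\frac{AX-BY}{AX+BY+R}+\frac{AY-BX}{AY+BX+R}=\frac{2XY(A^2-B^2)+R(A-B)(X+Y)}{(AX+BY+R)(AY+BX+R)}>0,
\end{equation*}
and exchangeability of $(X,Y)$ gives a strictly positive contribution for each pair. This is a short substitute for Lemma~\ref{lemma:TechnicalInequality}, which is what the paper uses to get $\rho=d(\lambda)>0$.

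\textbf{Optional stopping.} Neither $G_k$ nor $\{k\le N(\tau)\}=\{T_k<\tau_1\}$ is $\mathcal F_{T_{k-1}}$-measurable. Both depend on what happens in $(T_{k-1},T_k)$; in particular the infected set may empty there.

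Condition instead on $\mathcal G_k$: the history up to $T_{k-1}$, together with all clock and removal data on $(T_{k-1},T_k]$, but not the conductance values sampled there. Then both events lie in $\mathcal G_k$. On $G_k$ the adjacent conductances are fresh and independent of $\mathcal G_k$, so $\E[\Delta_k\mid\mathcal G_k]\ge d(\lambda)-(1+d(\lambda))\mathds{1}_{G_k^c}$, which gives your display.

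For the error term, use $\{N(\tau)\ge k\}\subseteq\{N(\tau)\ge k-1\}\in\mathcal F_{T_{k-1}}$ together with $\P(G_k^c\mid\mathcal F_{T_{k-1}})\le 2d/(1+\mu)$. This yields $\E[\sum_{k\le N(\tau)}\mathds{1}_{G_k^c}]\le\frac{2d}{1+\mu}(\E[N(\tau)]+1)\le\frac{4d}{1+\mu}\E[N(\tau)]$.

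Your ``main obstacle'' does not actually arise. With $G_k$ defined as a refresh of all $2d$ adjacent edges during the last inter-jump interval, the bound $2d/(1+\mu)$ holds by memorylessness whatever the past. Edges seen long ago would matter only if you weakened $G_k$ to require refreshing of previously observed edges only.

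\textbf{Comparison with the paper.} The paper never passes through $\tau_1$. It works along the jump chain and observes that the pairs $(\mathds{1}_{\mathcal A_k},\Delta_k\mathds{1}_{\mathcal A_k})$ are i.i.d. Here $\mathcal A_k$ is your $G_k$, $\P(\mathcal A_k^c)\le 2d/(1+\mu)$, and the drift on $\mathcal A_k$ is $\rho$. It then concludes by the law of large numbers, bounding increments on $\mathcal A_k^c$ below by $-1$. This avoids all stopping-time measurability issues.

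Your Wald-type argument over one regeneration period gives instead the explicit inequality $\E[X_{\tau_1}\cdot\ell]\ge\bigl(d(\lambda)-\tfrac{4d(1+d(\lambda))}{1+\mu}\bigr)\E[N(\tau)]$. This plugs directly into Lemma~\ref{regenerativSpeed}, at the cost of the filtration bookkeeping above. Both proofs rest on the same two inputs: positivity of the fresh-environment drift and the union bound $2d/(1+\mu)$.
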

\begin{proof}
    As the jumps happen at constant rate 1 it is enough to look at $(\tilde{X}_n)_{n \geq 0}$ the discretization of $(X_t)_{t\geq0}$.
    Firstly, we observe that by Lemma~\ref{lemma:TechnicalInequality} below
    \begin{equation*}
        \mathbb{E}_0^{\lambda, \mu}[\tilde{X}_1 \cdot \ell] = \rho > 0.
    \end{equation*}
    Let us notice that the probability that at least one of the edges adjacent to the current position does not update before the jump is, by union bound,
    \begin{align*}
        \mathbf{P}\left( \mathrm{exp}(1) \le \max_{j = 1, \dots, 2d} \mathrm{exp}(\mu) \right) \le \sum_{i=1}^{2d}\mathbf{P}\left( \mathrm{exp}(1) \le \mathrm{exp}(\mu) \right).
    \end{align*}
    We observe that 
    \begin{equation*}
        \mathbf{P}\left(\mathrm{exp}(1) \le \mathrm{exp}(\mu)\right) = \frac{1}{\mu+1}
    \end{equation*}
    Since $2d$ is finite, for any $\varepsilon>0$ we can choose $\mu_1 = \mu_1(\varepsilon)$ such that 
    \begin{equation*}
        \mathbf{P}\left( \mathrm{exp}(1) >  \max_{j = 1, \dots, 2d} \mathrm{exp}(\mu) \right) \ge (1 - \varepsilon).
    \end{equation*}
    Let $\mathcal{A}_k \coloneqq \{\text{the environment adjacent to }\tilde{X}_{k-1} \text{ refreshes completely before }T_{k}\}$. We can split
    \begin{equation*}
        \tilde{X}_n \cdot \ell = \sum_{k = 1}^{n} (\tilde{X}_k - \tilde{X}_{k+1}) \cdot \ell \mathds{1}\{\mathcal{A}_k\} + \sum_{k = 1}^{n} (\tilde{X}_k - \tilde{X}_{k+1}) \cdot \ell \mathds{1}\{\mathcal{A}_k^c\}.
    \end{equation*}
    Then, we observe that on the event $\mathcal{A}_k$ one has
    \begin{equation*}
        \mathbb{E}^{\lambda, \mu}[(\tilde{X}_k - \tilde{X}_{k+1}) \cdot \ell \mathds{1}\{\mathcal{A}_k\} \cdot e_1] = \rho,
    \end{equation*}
    and they are all i.i.d.\ by exploiting again the memoryless property of exponential random variables. By a straightforward application of the law of large numbers 
    \begin{equation*}
        \sum_{k = 1}^{n} (\tilde{X}_k - \tilde{X}_{k+1}) \cdot \ell \mathds{1}\{\mathcal{A}_k^c\} \ge (\rho-\varepsilon)n.
    \end{equation*}
    On the other hand, since $\{|(\tilde{X}_k - \tilde{X}_{k+1})\cdot \ell|\}_{k \ge 0}$ is bounded by $1$, for $n$ large we have that
    \begin{equation*}
        \sum_{k = 1}^{n} (\tilde{X}_k - \tilde{X}_{k+1}) \cdot \ell \mathds{1}\{\mathcal{A}_k^c\} \ge -2\varepsilon n.
    \end{equation*}
    To obtain the result one can choose $\mu_0$ such that $\varepsilon \le \rho/4$.
\end{proof}

\begin{lem}\label{lemma:TechnicalInequality}
Fix $n \in \mathbb{N}$, let $X_1, \dots, X_{2n}$ be i.i.d.\ strictly positive random variables. Set $\lambda_1 \ge \dots \ge \lambda_n \ge 0$ and assume there exists $m \in \{1, \dots,  n\}$ such that $\lambda_1 \ge \dots \ge \lambda_m > 0$. For any $k \in \{1, \dots, m\}$ we have
\begin{equation*}
    E\left[ \frac{e^{\lambda_k}X_k}{\sum_{i = 1}^n e^{\lambda_i}X_i + \sum_{i = 1}^n e^{-\lambda_i}X_{n+i}}\right] > E\left[ \frac{e^{-\lambda_k}X_{n+k}}{\sum_{i = 1}^n e^{\lambda_i}X_i + \sum_{i = 1}^n e^{-\lambda_i}X_{n+i}}\right].
\end{equation*}
\end{lem}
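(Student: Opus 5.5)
The plan is to exploit exchangeability of $X_k$ and $X_{n+k}$. Write $S = \sum_{i} e^{\lambda_i}X_i + \sum_i e^{-\lambda_i}X_{n+i}$. The quantity to study is
\[
D \coloneqq E\left[\frac{e^{\lambda_k}X_k - e^{-\lambda_k}X_{n+k}}{S}\right].
\]
Both expectations are finite, since each ratio lies in $[0,1]$. Hence $D$ is well defined, and the claim is that $D>0$.

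Let $S'$ denote $S$ with the roles of $X_k$ and $X_{n+k}$ swapped. Concretely, $S' = R + e^{\lambda_k}X_{n+k} + e^{-\lambda_k}X_k$, while $S = R + e^{\lambda_k}X_k + e^{-\lambda_k}X_{n+k}$. Here $R\ge 0$ collects the remaining terms and is independent of $(X_k,X_{n+k})$. The swap preserves the joint law, so $D = E\big[(e^{\lambda_k}X_{n+k}-e^{-\lambda_k}X_k)/S'\big]$. Averaging the two expressions, and setting $a=X_k$, $b=X_{n+k}$, $c=e^{\lambda_k}>1$, gives
\[
2D = E\left[\frac{ca - b/c}{R+ca+b/c} + \frac{cb - a/c}{R+cb+a/c}\right].
\]

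The key step is to show that the bracketed integrand is pointwise $\ge 0$, and strictly positive with positive probability. Put the two fractions over the common denominator $(R+ca+b/c)(R+cb+a/c)>0$ and expand the numerator. Its terms linear in $R$ give $R\,(c-1/c)(a+b)$. The quadratic part is
\[
(ca-b/c)(cb+a/c)+(cb-a/c)(ca+b/c) = 2ab\,(c^2 - c^{-2}) + (a^2+b^2)(1-1).
\]
Here the $a^2$ terms are $a^2 - a^2 = 0$ and likewise for $b^2$, leaving $2ab(c^2-c^{-2})$. Both contributions are strictly positive because $a,b>0$, $R\ge0$ and $c>1$ (this uses $k\le m$, so that $\lambda_k>0$). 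Hence the integrand is strictly positive almost surely, and $D>0$. This algebraic verification is the only real obstacle, and it reduces to a routine expansion. I will double-check the cross terms when writing it out, although any sign error there would still leave the $R$-term positive.

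Finally, one remark on applying the lemma. In the proof of Theorem~\ref{thm:SecondMain} the lemma is used to conclude $\rho>0$. Summing $(e_k\cdot\ell)$ times the difference $D_k$ over $k$ with $e_k\cdot \ell>0$ yields $\rho>0$ from the strict inequality, once the ratios $e^{\lambda e_k\cdot\ell}$ are identified with $e^{\lambda_k}$.
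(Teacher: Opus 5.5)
Your proof is correct, and it takes a different and more economical route than the paper's. You symmetrize only over the pair $(X_k,X_{n+k})$, and the averaged integrand is indeed
\[
\frac{R\,(c-c^{-1})(a+b)+2ab\,(c^{2}-c^{-2})}{(R+ca+b/c)(R+cb+a/c)}>0 .
\]
The $R$-term vanishes when $n=1$, so strictness rests on $2ab(c^2-c^{-2})>0$. Your hedge that a sign error would be saved by the $R$-term is therefore not valid, but it is not needed, since the expansion is right.

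The paper instead sandwiches the tail of the denominator, $\widetilde N^{(k)} < \sum_{i\ge k}(e^{\lambda_i}X_i+e^{-\lambda_i}X_{n+i}) < N^{(k)}$. Here $N^{(k)}$ and $\widetilde N^{(k)}$ put the common weight $e^{\lambda_k}$, resp.\ $e^{-\lambda_k}$, on every $X_i,X_{n+i}$ with $i\ge k$. This step uses the ordering $\lambda_i\le\lambda_k$ for $i\ge k$, and it is where the strict inequality comes from. Once the denominators are symmetric, the paper conditions on $M^{(k-1)}$ and uses exchangeability of the whole block $\{X_i,X_{n+i}\}_{i\ge k}$. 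This gives each side as $\frac{1}{2(n-k+1)}E\bigl[N/(M^{(k-1)}+N)\bigr]$, with $N=N^{(k)}$ or $\widetilde N^{(k)}$, and monotonicity of $x\mapsto x/(M+x)$ concludes.

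Your argument buys generality and simplicity:
\begin{itemize}
\item it needs only $\lambda_k>0$, with no ordering or sign conditions on the other $\lambda_i$;
\item it needs only that the law of $(X_k,X_{n+k},R)$ is invariant under swapping the first two entries;
\item it gives pointwise positivity of the symmetrized integrand, not just positivity in expectation;
\item it avoids the bookkeeping of the paper's version. There the normalization should read $2(n-k+1)$. The replacement of $X_{n+k}$ by $X_k$ is only justified after the denominator has been symmetrized. The intermediate strict inequality is an equality for $k=1$, because $M^{(0)}=0$.
\end{itemize}
In exchange, the paper's route gives a separate explicit bound on each of the two expectations, though these bounds are not used elsewhere.
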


\begin{proof}Set $k \in \{1, \dots, m\}$ as in the statement. Let us first set some notation, let $j \in \{1, \dots,n\}$
    \begin{equation}\label{eqn:ThreeQuantitiesAppendix}
        \begin{split}
            M^{(j)} &\coloneqq \sum_{i = 1}^j e^{\lambda_i}X_i + \sum_{i = 1}^j e^{-\lambda_i}X_{n+i}\\
            N^{(j)} &\coloneqq \sum_{i = j}^{n} e^{\lambda_j}\left( X_i + X_{n + i}  \right)\\
            \widetilde{N}^{(j)} &\coloneqq \sum_{i = j}^{n} e^{-\lambda_j}\left( X_i + X_{n + i}  \right).
        \end{split}
    \end{equation}
    We observe that, almost surely for $j \in \{1, \dots, m\}$
    \begin{equation*}
        N^{(j)} > \widetilde{N}^{(j)}, \quad \quad M^{(n)} - M^{(j-1)} < N^{(j)} \quad \text{ and } \quad \quad M^{(n)} - M^{(j-1)} > \widetilde{N}^{(j)}.
    \end{equation*}
    Applying these inequalities we find
    \begin{equation*}
        E\left[ \frac{e^{\lambda_k}X_k}{M^{(n)}}\right] = E\left[ \frac{e^{\lambda_k}X_k}{M^{(k-1)} + M^{(n)} - M^{(k-1)}}\right] > E\left[ \frac{e^{\lambda_k}X_k}{M^{(k-1)} + N^{(k)}}\right].
    \end{equation*}
    One obtains similarly that 
    \begin{equation*}
        E\left[ \frac{e^{-\lambda_k}X_k}{M^{(n)}}\right] < E\left[ \frac{e^{-\lambda_k}X_k}{M^{(k-1)} + \widetilde{N}^{(k)}}\right].
    \end{equation*}
    Thus, we have reduced the proof to showing that
    \begin{equation}\label{eqn:InequalityOneStepSimplified}
        E\left[ \frac{e^{\lambda_k}X_k}{M^{(k-1)} + N^{(k)}}\right] \ge E\left[ \frac{e^{-\lambda_k}X_k}{M^{(k-1)} + \widetilde{N}^{(k)}}\right].
    \end{equation}
    To this end let us make the observation that 
    \begin{equation*}
        1 = \sum_{i = k}^{n}E\left[ \frac{e^{\lambda_k}(X_i + X_{i+n})}{M^{(k-1)} + N^{(k)}} \mid M^{(k-1)}\right] + E\left[ \frac{M^{(k-1)}}{M^{(k-1)} + N^{(k)}} \mid M^{(k-1)}\right].
    \end{equation*}
    Observe that $M^{(k-1)}$ is by assumption independent of $N^{(k)}$ and its constituents, hence $\{X_i, X_{i+n}\}_{i = k, \dots, n}$ are a family of i.i.d.\ random variables also under the conditioning, in particular they are identically distributed. We deduce that
    \begin{equation*}
        E\left[ \frac{e^{\lambda_k}X_k}{M^{(k-1)} + N^{(k)}} \mid M^{(k-1)}\right] = \frac{1}{2(n - k)}\left(1 - E\left[ \frac{M^{(k-1)}}{M^{(k-1)} + N^{(k)}} \mid M^{(k-1)}\right]\right).
    \end{equation*}
    Proceeding in the same way one also obtains
    \begin{equation*}
        E\left[ \frac{e^{-\lambda_k}X_k}{M^{(k-1)} + \widetilde{N}^{(k)}} \mid M^{(k-1)}\right] = \frac{1}{2(n - k)}\left(1 - E\left[ \frac{M^{(k-1)}}{M^{(k-1)} + \widetilde{N}^{(k)}} \mid M^{(k-1)}\right]\right).
    \end{equation*}
    We recall that almost surely $N^{(k)} > \widetilde{N}^{(k)}$ which implies $ \frac{M^{(k-1)}}{M^{(k-1)} + N^{(k)}} < \frac{M^{(k-1)}}{M^{(k-1)} + \widetilde{N}^{(k)}}$ from which we deduce
    \begin{equation*}
        \frac{1}{2(n - k)}\left(1 - E\left[ \frac{M^{(k-1)}}{M^{(k-1)} + N^{(k)}} \mid M^{(k-1)}\right]\right) > \frac{1}{2(n - k)}\left(1 - E\left[ \frac{M^{(k-1)}}{M^{(k-1)} + \widetilde{N}^{(k)}} \mid M^{(k-1)}\right]\right).
    \end{equation*}
    One can conclude the proof using this last inequality together with the tower rule on both sides of \eqref{eqn:InequalityOneStepSimplified}.
\end{proof}
\subsection{The speed is positive for $\mu$ small enough}
In this section we assume that the distribution of the environment $q$ is uniformly elliptic with constant $\kappa \ge 1$. We write $\mu = 0$ for a static environment. The main result is the following.
\begin{thm}\label{thm1}
	Let $\lambda>0$. For all $\mu>0$ small enough and $e_i$ with $i \in \{1,...,2d\}$ such that $e_i \cdot \ell >0$ we have that 
	\begin{eqnarray*}
		\lim_{t\to\infty}X_t \cdot e_i=+\infty \quad \text{a.s.}
	\end{eqnarray*}
\end{thm}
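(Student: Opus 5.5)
The plan is to treat the dynamical model for small $\mu$ as a perturbation of the static model $\mu=0$. For $\mu=0$ this is a discrete-time biased walk among i.i.d.\ uniformly elliptic conductances, and that walk is known to be ballistic in direction $\ell$ and to satisfy Sznitman's condition $(T)$. I would transfer a \emph{finite-scale} ballistic estimate from $\mu=0$ to small $\mu>0$, and then upgrade it to transience in every direction $e_i$ with $e_i\cdot\ell>0$ by a renormalisation argument. Since the jump times form a rate-one Poisson process, it suffices, as in the proof of Theorem \ref{thm:SecondMain}, to work with the discrete skeleton $(\tilde X_n)_{n\ge0}$.

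\textbf{Step 1 (static input).} Fix $i$ with $e_i\cdot\ell>0$ and consider the slab $U_L=\{x: |x\cdot e_i|< L\}$ intersected with a box of side $L^3$ in the transversal directions. I would use the known static results, in the form of Sznitman's effective criterion, to get for large $L$ an annealed bound $\mathbb{P}^{\lambda,0}(\text{the walk exits } U_L \text{ other than through } \{x\cdot e_i\ge L\})\le e^{-cL}$. I would also bound the exit time by $CL$ with probability $1-e^{-cL}$.

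Because later blocks start from environments that have already been partly explored, I would need this bound uniformly over the initial state of the finitely many edges already seen. Uniform ellipticity is what makes this possible. Altering a fixed set $F$ of conductances inside $[\kappa^{-1},\kappa]$ changes each transition probability by at most a factor $\kappa^{4}$. Hence the probability of any path event is multiplied by at most $\kappa^{4\,\#\{\text{visits to } F\}}$. Using this, I would either show that the seen edges are rarely revisited before the slab is exited, or absorb the factor into the exponential estimate.

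\textbf{Step 2 (perturbation).} Up to time $CL$ the walk sees at most $2dCL$ edges. The probability that any of them is refreshed during $[0,CL]$ is at most $2dC^2L^2\mu$. So for the fixed $L$ from Step 1, choosing $\mu\le e^{-cL}$ gives the same one-scale estimate for $\mu>0$, with the bound $2e^{-cL}$. The natural tool here is the graphical coupling with the infected-set construction of Section \ref{sect:Prelim}: on the event of no refresh, the static and dynamic walks coincide up to time $CL$.

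\textbf{Step 3 (renormalisation and conclusion).} This step is the main obstacle. At the stopping times of successive slab exits, the environment ahead of the walker is fresh except on edges the walker has already seen. Those edges are in addition partially refreshed, which only helps, since refreshed edges are again $q$-distributed and independent. I would run Sznitman's multiscale scheme, with scales $L_{k+1}=L_k^{2}$ and bad events controlled block by block via the strong Markov property of the joint process $(\omega_t,X_t)$ and the uniform Step 1 bound. The aim is to show that the probability of exiting $\{x\cdot e_i> -L_k\}$ before $\{x\cdot e_i\ge L_k\}$ decays in $k$. Consequently $\mathbb{P}^{\lambda,\mu}(\liminf_t X_t\cdot e_i=-\infty\text{ or }\sup_t X_t\cdot e_i<\infty)=0$. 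Equivalently, the one-scale estimate at scale $L$ is propagated exactly as in the effective criterion, with the dynamical refreshing entering only through the negligible term of Step 2.

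Finally, by Lemma \ref{regenerativSpeed} and the i.i.d.\ structure of regeneration increments, $X_t/t$ converges a.s. The transience just obtained forces $\mathbb{E}[X_{\tau_1}\cdot e_i]\ge 0$, and I would strengthen the renormalisation bounds to get positive drift, which yields $\lim_t X_t\cdot e_i=+\infty$ a.s. I expect the hardest part to be ensuring that the estimates stay uniform over the conditioned (already seen) edges across scales. If this fails, the first thing to try is bounding the number of revisits to seen edges using the drift and ellipticity, and paying $\kappa^{4}$ per revisit.
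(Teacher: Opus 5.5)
Your Step 3 is where the whole difficulty lies, and it is left as a programme rather than an argument. As set up, it does not go through.

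Sznitman's effective criterion is built on quenched box quantities. It exploits the fact that in a static i.i.d.\ environment, disjoint unvisited boxes carry environments independent of each other and of the walk's past. An annealed block bound iterated through the strong Markov property, as you propose, does not propagate. The reason is that each new block starts from the current configuration $\omega_t$, whose conditional law given the past is not $Q$. Explored edges stay frozen for times of order $1/\mu\ge e^{cL}$, far longer than a block crossing at the seed scale, so successive blocks do not see a fresh environment. Hence the claim that the seed estimate ``is propagated exactly as in the effective criterion, with the dynamical refreshing entering only through the negligible term of Step 2'' is unjustified. Likewise ``refreshing only helps'' has no monotonicity behind it in $d\ge 2$.

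Your proposed remedy for uniformity over seen edges also fails in general. Every new block starts adjacent to the explored region. The change-of-measure factor $\kappa^{4\#\{\text{visits to }F\}}$ can only be absorbed if the number of returns to $F$ has exponential tails with rate larger than $4\log\kappa$ (or $8\log\kappa$ after Cauchy--Schwarz). This need not hold, e.g.\ for small $\lambda$ and large $\kappa$.

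The missing idea is that under uniform ellipticity no annealed input is needed, because the static estimate holds for \emph{every} environment. There is $L=L(\lambda,\kappa,d)$ with $\mathbb{P}^{\lambda,0}_{\omega,0}(T_L<T_{-L})\ge 2/3$ for all $\omega\in[\kappa^{-1},\kappa]^{E(\mathbb{Z}^d)}$. This is Lemma~\ref{staticLemma}, proved by the Dirichlet principle: one compares the energy of $x\mapsto\mathbb{P}^{\lambda,0}_{\omega,x}(T_L<T_{-L})$ with that of the explicit one-dimensional profile $x\mapsto (e^{2L\lambda_1}-e^{-2\lambda_1 x\cdot e_1})/(e^{2L\lambda_1}-e^{-2L\lambda_1})$.

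Your own Step 2 coupling (no refresh of seen edges before the exit) then gives $\mathbb{P}^{\lambda,\mu}_{\omega,0}(T_L<T_{-L})\ge 3/5$ for all small $\mu$, uniformly in the starting configuration $\omega$. This uses that the exit time has geometric tails uniformly in $\omega$. Since $(\omega_t,X_t)$ is Markov, apply the strong Markov property at successive exits of slabs of width $2L$. At these times $X\cdot e_i$ makes steps $\pm L$, with conditional probability at least $3/5$ of $+L$ given the whole past. So it dominates a transient walk. This needs one fixed scale, no renormalisation, and no control of the seen edges.

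Your final step is also heavier than needed. By the trichotomy for the i.i.d.\ regeneration increments, $\liminf_t X_t\cdot e_i>-\infty$ with positive probability already forces $\mathbb{E}^{\lambda,\mu}[X_{\tau_1}\cdot e_i]>0$.
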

\begin{cor}
    Let $\lambda>0$ then for $\mu>0$ small enough
    \begin{eqnarray*}
        v_{\ell}(\lambda,\mu) > 0.
    \end{eqnarray*}
\end{cor}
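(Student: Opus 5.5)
The corollary should follow from Theorem \ref{thm1} together with the regeneration representation of the speed in Lemma \ref{regenerativSpeed}. Fix $\lambda>0$ and let $\mu>0$ be small enough that Theorem \ref{thm1} applies. By Lemma \ref{regenerativSpeed}, the limit $v_{\ell}(\lambda,\mu)=\lim_{t\to\infty} X_t\cdot\ell/t$ exists $\P^{\lambda,\mu}$-a.s. It is the deterministic constant $\E^{\lambda,\mu}[X_{\tau_1}\cdot\ell]/\E^{\lambda,\mu}[\tau_1]$, and $\E[\tau_1]\in(0,\infty)$ by Lemma \ref{lemma:ExpTail}. So it suffices to prove $\E^{\lambda,\mu}[X_{\tau_1}\cdot\ell]>0$, and I will do this by ruling out the cases $<0$ and $=0$.

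\textbf{Step 1: positive coordinate increments.} Set $\mathcal{R}=\{e_i : e_i\cdot\ell>0\}$, which is nonempty. Write $Z_k = X_{\tau_{k}}-X_{\tau_{k-1}}$; these are i.i.d.\ with $|Z_k|_1\le N([\tau_{k-1},\tau_k])$, and by Lemma \ref{lemma:ExpTail} they have exponential moments. The SLLN gives
\[
\frac{X_{\tau_n}\cdot e_i}{n}\to \E[X_{\tau_1}\cdot e_i] \quad \text{a.s.}
\]
for each $i$. Theorem \ref{thm1} gives $X_{\tau_n}\cdot e_i\to+\infty$ for each $e_i\in\mathcal{R}$, so $\E[X_{\tau_1}\cdot e_i]\ge 0$. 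Since $\ell=\sum_{e_i\in\mathcal{R}}(e_i\cdot\ell)\,e_i$, we get $\E[X_{\tau_1}\cdot\ell]\ge 0$.

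\textbf{Step 2: ruling out zero mean.} This is the main point. Suppose $\E[X_{\tau_1}\cdot e_i]=0$ for some $e_i\in\mathcal{R}$. Then $S_n=X_{\tau_n}\cdot e_i$ is a mean-zero random walk with i.i.d.\ increments of finite variance. The variance is nonzero, because with positive probability the walk makes exactly one jump, in direction $-e_i$ or $+e_i$, before $\tau_1$, so the increments are non-degenerate. By the Chung--Fuchs theorem (or just the CLT plus the Hewitt--Savage $0$--$1$ law), $S_n$ is recurrent and $\liminf_n S_n=-\infty$ a.s. This contradicts Theorem \ref{thm1}. Hence $\E[X_{\tau_1}\cdot e_i]>0$ for every $e_i\in\mathcal{R}$.

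\textbf{Conclusion.} Combining the coordinates gives $\E[X_{\tau_1}\cdot\ell]=\sum_{e_i\in\mathcal{R}}(e_i\cdot\ell)\,\E[X_{\tau_1}\cdot e_i]>0$, and therefore $v_\ell(\lambda,\mu)>0$. The only nontrivial input is the non-degeneracy of the increments in Step 2, which follows from the simple positive-probability event above.
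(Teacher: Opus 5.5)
Your proof is correct and follows the paper's route. You use transience of $X_{\tau_n}\cdot e_i$ from Theorem~\ref{thm1} to get $\E^{\lambda,\mu}[X_{\tau_1}\cdot e_i]>0$ for each $e_i$ with $e_i\cdot\ell>0$, then plug $\ell=\sum_{e_i\cdot\ell>0}(e_i\cdot\ell)\,e_i$ into the regeneration formula of Lemma~\ref{regenerativSpeed}. The paper only asserts the strict positivity of $\E^{\lambda,\mu}[X_{\tau_1}\cdot e_i]$; your Steps 1--2 supply the justification it omits (the SLLN for the sign, and recurrence of a non-degenerate mean-zero integer walk to rule out zero mean).
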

\begin{proof}
    We use Theorem \ref{thm1} and the existence of the linear speed in Lemma \ref{regenerativSpeed}.\\
    Let $e_i$ with $i \in \{1,...,2d\}$ such that $e_i \cdot \ell >0$, then 
    $(X_{\tau_n}\cdot e_i)_{n\in\mathbb{N}}$ is a one dimensional random walk with $\lim_{n\to\infty}X_{\tau_n}\cdot e_i
    =
    \lim_{t\to\infty}X_t\cdot e_i
    =
    +\infty$. It follows that
    \begin{eqnarray*}
    \mathbb{E}^{\lambda,\mu}\!\left[X_{\tau_1}\cdot e_i\right]>0.
    \end{eqnarray*}
    Using that 
    \begin{eqnarray}
        v_{\ell}(\lambda,\mu)
    = \sum_{i \in \{1,...,2d\}, \, e_i \cdot \ell > 0}
    (\ell \cdot e_i)\frac{\mathbb{E}^{\lambda,\mu}[X_{\tau_1}\cdot e_i]}
    {\mathbb{E}^{\lambda,\mu}[\tau_1]},
    \end{eqnarray}
    we deduce that 
    \[
    v_{\ell}(\lambda,\mu) > 0.\qedhere
    \] 
\end{proof}
The reminder of the section is devoted to the proof of the theorem. We can assume without loss of generality that $i = 1$.

Let $(\tilde{X}_n)_{n \geq 0}$ be the discretized random walk.
We write $H_j$ for $j \in \Z$ be the following hyperplane: $H_{j} = \{x \in \zd: x \cdot e_1 = j\}$, and $T_{j} = \inf\{n \geq 0: \tilde{X}_n \in H_j\}$ for the hitting time of that hyperplane.
\begin{lem}\label{staticLemma}
    Let $\lambda>0$, $\exists\,L\in\mathbb{N}$ depending on $\lambda,\kappa,d$ such that
\begin{eqnarray*}
\P_{\omega,0}^{\lambda,0}(T_L<T_{-L})\ge \frac23.
\qquad \forall\,\omega\in\Omega,
\end{eqnarray*}
\end{lem}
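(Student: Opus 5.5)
The plan is to exploit that for $\mu=0$ the walk is a reversible Markov chain on a fixed weighted graph, and to reduce the statement to a comparison of effective conductances that can be made uniform in $\omega$ by uniform ellipticity. A local drift argument, where one shows that $e^{-\beta \tilde X_n\cdot e_1}$ is a supermartingale, does not work directly. The reason is that at a single site the $e_1$-drift $p^{\lambda,\omega}(x,x+e_1)-p^{\lambda,\omega}(x,x-e_1)$ may be negative when $\omega(\{x,x-e_1\})/\omega(\{x,x+e_1\})$ is close to $\kappa^2$ and $\lambda$ is small. So the argument has to be global in space.

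\textbf{Step 1: reversibility.} For $\mu=0$, set $c(\{x,y\}) \coloneqq \omega(\{x,y\})e^{\lambda (x+y)\cdot\ell}$ and $\pi(x)=\sum_{y\sim x}c(\{x,y\}) = e^{2\lambda x\cdot \ell}W^{\lambda,\ell}(\omega,x)$. Then $\tilde X$ is the random walk on the network $(\zd,c)$, and $h(x)=\P^{\lambda,0}_{\omega,x}(T_L<T_{-L})$ is the unique bounded $c$-harmonic function in the slab $S_L=\{|x\cdot e_1|<L\}$ with boundary values $1$ on $H_L$ and $0$ on $H_{-L}$. I will use the standard bound
\begin{equation*}
\P_{\omega,0}(T_{-L}<T_L)\le \frac{\P_{\omega,0}(T_{-L}<T_0^+)}{\P_{\omega,0}(T_{-L}\wedge T_L<T_0^+)}\le \frac{\mathcal C(0\leftrightarrow H_{-L})}{\mathcal C(0\leftrightarrow H_{L})},
\end{equation*}
with effective conductances computed in the network killed on $H_{\pm L}$.

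\textbf{Step 2: lower bound on $\mathcal C(0\leftrightarrow H_L)$.} I will use Thomson's principle with the unit flow along the straight path $0,e_1,2e_1,\dots,Le_1$. Along it $x\cdot\ell=j\,e_1\cdot\ell$ and $c\ge \kappa^{-1}e^{\lambda(2j+1)e_1\cdot\ell}$. Hence the resistance is at most $\kappa\sum_{j\ge0}e^{-\lambda(2j+1)e_1\cdot \ell}$, a constant depending only on $\lambda,\kappa,\ell$. This gives $\mathcal C(0\leftrightarrow H_L)\ge c_1>0$ uniformly in $\omega$ and $L$.

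\textbf{Step 3: upper bound on $\mathcal C(0\leftrightarrow H_{-L})$.} This is the main obstacle. The naive Nash--Williams cutset between $H_{-L}$ and $H_{-L+1}$ is infinite. Moreover, if $\ell$ has other positive components, conductances near $H_{-L}$ far away in those directions are huge, so the effective conductance to all of $H_{-L}$ need not be small. I would first try to handle this with a Dirichlet test function adapted to $\ell$ rather than to $e_1$. Concretely, I would take $f(x)=1$ on $\{x\cdot\ell\ge -s\}$ and let $f$ decay to $0$ linearly in $x\cdot \ell$ on a window of width $s$. Its energy is bounded by $\sum e^{2\lambda x\cdot \ell}$ over the window, and this is still infinite transversally.

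Because of that, I would instead truncate in space. Up to the exit time of a box $B_R$ with $R=L^2$, the walk can only reach $H_{-L}\cap B_R$, whose boundary cutset has conductance at most $C(d,\kappa)R^{d-1}\max_{x\in H_{-L}\cap B_R}e^{2\lambda x\cdot\ell}$. The exit from $B_R$ before $T_L\wedge T_{-L}$ is controlled separately by a Carne--Varopoulos or heat-kernel bound, together with the fact that $T_L\wedge T_{-L}$ has an exponential tail on scale $L^2$ by uniform ellipticity.

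If $\ell$ has other nonzero components, the exponential factor is not small. In that case I would replace $H_{\pm L}$ by the level sets of $x\cdot\ell$ in the argument, prove the analogous bound there, and then transfer it to hyperplanes orthogonal to $e_1$ by a geometric comparison. I expect this transfer to be where the real work lies.

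\textbf{Step 4: conclusion.} Combining Steps 1--3, $\P_{\omega,0}(T_{-L}<T_L)\le c_1^{-1}C L^{C}e^{-c\lambda L}$ plus the box-exit error, uniformly in $\omega\in\Omega$. Choosing $L=L(\lambda,\kappa,d)$ large makes this at most $1/3$, which is the claim.
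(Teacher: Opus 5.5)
\textbf{Verdict.} There is a genuine gap, and it sits in Step 3, which carries the whole difficulty. Steps 1 and 2 are sound. Step 2 is the Thomson (dual) form of the paper's lower bound $\mathcal{E}(u,\Pi_l)\ge \kappa^{-1}(1-e^{-2\lambda_1})(1-u(0))^2$, which the paper obtains by Cauchy--Schwarz along the row through $0$.

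\textbf{The case $\ell=e_1$.} This is in fact the only case the paper's appendix argument covers: its energy bounds treat $\omega^\lambda(x,y)$ as a function of $x\cdot e_1$ alone. Your truncation to $B_R$ with $R=L^2$ rests on the claim that $T_L\wedge T_{-L}$ has an exponential tail on scale $L^2$ ``by uniform ellipticity''. Uniform ellipticity gives much less. From any point, $H_L$ is reached within $2L$ steps with probability at least $\delta^{2L}$, where $\delta$ is the worst-case probability of a $+e_1$ step. That only yields a time scale $\delta^{-2L}$. Carne--Varopoulos then confines the walk only to $R$ of order $\delta^{-L}$, up to polynomial factors. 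The cutset bound becomes $R^{d-1}e^{-2\lambda L}=(\delta^{-(d-1)}e^{-2\lambda})^L$, which diverges, for instance for all small $\lambda$ (then $\delta<1/(2d)$) or for large $\kappa$. A polynomial exit-time bound holding uniformly in $\omega$ would need an argument of the same nature as the lemma itself, precisely because, as you note, the local $e_1$-drift can be negative.

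\textbf{The missing idea.} The paper uses a device taken from Gantert--Guo--Nagel and argues by contradiction with Dirichlet energies on boxes of transverse size $\tilde L=Le^{\lambda_1 L/d}$. It compares $u$ with the explicit profile $\bar u(x)=(e^{2\lambda_1L}-e^{-2\lambda_1x_1})/(e^{2\lambda_1L}-e^{-2\lambda_1L})$ through a linear cutoff in an annulus of width $\tilde L$. It then picks the box in which $u$ has almost maximal energy, so the cutoff error is $O(L/\tilde L)$ times that same energy and can be absorbed. Since $\mathcal{E}(\bar u,\Pi)\lesssim \tilde L^{d-1}e^{-2\lambda_1 L}\to 0$, one gets $\mathcal{E}(u,\tilde\Pi)\to0$. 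This contradicts the lower bound without any control of times or of transverse spread.

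\textbf{The case where $\ell$ has a second positive coordinate.} The ``transfer'' you postpone cannot be carried out, because the bound you state in Step 4 is false uniformly in $\omega$. Take $d=2$ and $\ell=(e_1+e_2)/\sqrt2$. Let $\gamma$ be the staircase path that repeats the steps $e_2,e_2,e_2,-e_1$. Put conductance $\kappa$ on the edges of the disjoint staircases $\gamma+2ke_1$, $k\in\mathbb{Z}$, and $\kappa^{-1}$ on all other edges.
\begin{itemize}
\item Along a staircase, $x\cdot\ell$ increases by $\sqrt2$ per period. So the walk stays on a staircase, drifting along it, for at least order $\kappa^2e^{-\sqrt2\lambda}$ steps.
\item Every off-staircase site has only edges of conductance $\kappa^{-1}$ and is adjacent to a staircase, so from there the walk re-enters a staircase within $O(1)$ steps.
\item For $\kappa$ large, this periodic environment therefore has a velocity whose $e_1$-component is about minus the stationary forward current along a staircase, hence negative.
\end{itemize}
Consequently $\mathbb{P}^{\lambda,0}_{\omega,0}(T_L<T_{-L})\to 0$ as $L\to\infty$. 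The uniform statement, and the paper's proof of it, should therefore be read with $\ell=e_1$. The paper's constant $\tilde C$ and its claim $E_L<\infty$ silently drop the factor $e^{2\lambda\sum_{i\ge 2}x_i\,(e_i\cdot\ell)}$ coming from transverse positions. $\ell=e_1$ is the case in which your Step 3 has to be completed, and there it needs the box-energy argument above rather than exit-time estimates.
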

\begin{proof}
    The proof of this Lemma is in Appendix~\ref{sect:AppendixHitting}.
\end{proof}

We highlight that crucially, the statement of Lemma~\ref{staticLemma} holds uniformly over all environments $\omega \in \Omega$ and not simply with high probability.

\begin{lem}\label{Lemma 2}
    Let $\lambda>0$ then there exists an $L\in\mathbb{N}$ and a $\mu_0>0$ such that for all $ \mu<\mu_0$ and any starting configuration $\omega\in\Omega$
    \begin{eqnarray*}
    \P^{\lambda,\mu}_{\omega,0}(T_L<T_{-L})\geq \frac{3}{5}
    \end{eqnarray*}
\end{lem}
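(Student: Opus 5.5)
We take $L$ from Lemma~\ref{staticLemma}, so that $\P^{\lambda,0}_{\omega,0}(T_L<T_{-L})\ge 2/3$ for every $\omega\in\Omega$. The idea is a perturbation argument: for small $\mu$, with high probability no edge the walk can use refreshes before the walk exits the slab $\{-L<x\cdot e_1<L\}$. On that event the dynamical walk coincides with the static walk in the frozen environment $\omega_0=\omega$. The difficulty is that the exit time is not bounded: the slab is infinite in the directions orthogonal to $e_1$, and the walk could linger there. So we first truncate in time.

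\emph{Step 1: uniform truncation of the exit time.} We show there is $n_0=n_0(\lambda,\kappa,d,L)$ such that $\P^{\lambda,0}_{\omega,0}(T_L\wedge T_{-L}>n_0)\le 1/60$ for all $\omega$. By uniform ellipticity, from any site the discrete walk steps in direction $e_1$ with probability at least $p_*:=\kappa^{-2}e^{\lambda e_1\cdot\ell}/(2d\,e^{\lambda})>0$. This bound is uniform in $\omega$, and also in time, so it holds for the dynamical walk as well. Hence in every block of $2L$ consecutive steps the walk exits the slab with probability at least $p_*^{2L}$. By the Markov property,
\begin{equation*}
\P(T_L\wedge T_{-L}>2Lk)\le (1-p_*^{2L})^k .
\end{equation*}
Choose $k$ so that this is at most $1/60$, and set $n_0=2Lk$. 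Since the bound is uniform in the environment, it holds under $\P^{\lambda,\mu}_{\omega,0}$ as well as under $\P^{\lambda,0}_{\omega,0}$.

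\emph{Step 2: coupling on a good event.} Build the static walk and the dynamical walk from the same jump clock and the same uniforms $U_n$, with environments $\omega$ (frozen) and $(\omega_t)_{t\ge0}$ started at $\omega$. Let $G$ be the event that, during the first $n_0$ jumps, no edge within graph distance $n_0+1$ of the origin refreshes before the time $T_{n_0}$ of the $n_0$-th jump. On $G$ the transition probabilities agree at every step, so the two paths coincide up to step $n_0$. The number $K$ of such edges is at most $C(d)n_0^d$, a constant. Since $T_{n_0}$ is a sum of $n_0$ unit exponentials, $\P(T_{n_0}>2n_0)$ is small; we make $n_0$ larger if needed so that it is at most $1/60$. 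Then
\begin{equation*}
\P(G^c)\le \tfrac1{60}+K\bigl(1-e^{-2n_0\mu}\bigr),
\end{equation*}
and we choose $\mu_0$ so that $K(1-e^{-2n_0\mu_0})\le 1/60$.

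\emph{Step 3: conclusion.} Combining the steps,
\begin{equation*}
\P^{\lambda,\mu}_{\omega,0}(T_L<T_{-L})\ge \P^{\lambda,0}_{\omega,0}(T_L<T_{-L},\,T_L\le n_0)-\P(G^c)\ge \tfrac23-\tfrac1{60}-\tfrac2{60}=\tfrac{37}{60}>\tfrac35 .
\end{equation*}
The main obstacle is Step 1: because the slab is infinite, the static exit time must be controlled uniformly in $\omega$. The ellipticity-based geometric bound settles this, and it is the reason the constants $L$, $n_0$ and $\mu_0$ do not depend on the starting configuration.
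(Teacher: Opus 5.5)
Your proof is correct and follows the same route as the paper: on an event where no relevant conductance refreshes before the walk exits the slab, the dynamical walk coincides with the static one, and Lemma~\ref{staticLemma} supplies the bound $2/3$. The paper instead conditions on such an event $S$, asserting $\P^{\lambda,\mu}_{\omega,0}(T_L<T_{-L}\mid S)=\P^{\lambda,0}_{\omega,0}(T_L<T_{-L})$ and $\P^{\lambda,\mu}_{\omega,0}(S)\to 1$ without further comment. Your coupling with common uniforms gives the subtractive bound $\P^{\lambda,0}_{\omega,0}(T_L<T_{-L},\,T_L\le n_0)-\P(G^c)$, which is what is really needed, since $S$ depends on the trajectory and conditioning on it reweights the walk. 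Together with your ellipticity-based truncation of the exit time, this makes both steps rigorous and uniform in $\omega$.
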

\begin{proof}
    Let us look at the discretized chain $(\widetilde{X}_n)_{n\in\mathbb{N}}$.

Let
\(
A_n=\{(x,y)\in E^d \text{ s.t. } \exists\,k<n \text{ with } X_k\in\{x,y\}\}.
\)
Then define $S$ to be the following event:
\begin{eqnarray*}
S=\{\,\forall e\in A_{N(T_L\wedge T_{-L})}: \omega(e)\text{ does not refresh in }[0,T_L\wedge T_{-L}]\,\}.
\end{eqnarray*}
So on the event $S$ the random walker behaves like a random walker on static environments until it hits one of the hyperplanes $H_l$ or $H_{-L}$. So using Lemma \ref{staticLemma} we get
\begin{eqnarray*}
\P^{\lambda,\mu}_{\omega,0}(T_L<T_{-L})
&\geq&
\P^{\lambda,\mu}_{\omega,0}(T_L<T_{-L}\cap S)
\\
&=&
\P^{\lambda,\mu}_{\omega,0}(T_L<T_{-L}\mid S)\cdot \P^{\lambda,\mu}_{\omega,0}(S)
\\
&=&
\P^{\lambda,0}_{\omega,0}(T_L<T_{-L})\cdot \P^{\lambda,\mu}_{\omega,0}(S)
\\
&\geq&
\frac23\,\P^{\lambda,\mu}_{\omega,0}(S).
\end{eqnarray*}

But for a fixed $L$ we have
$\lim_{\mu\to0}\P^{\lambda,\mu}_{\omega,0}(S)=1$
so for $\mu$ small enough $\P^{\lambda,\mu}_{\omega,0}(T_L<T_{-L})\geq\frac35.$
\end{proof}
We define 
\begin{equation*}
	T^{(k+1)}=T^{(k)}+(T_L\wedge T_{-L})\circ\theta_{T^{(k)}},\qquad \text{with} \qquad
	T^{(0)}=0\ \text{a.s.}
\end{equation*}
We write $Z_{\lambda} = \sum_{i=1}^{2d}e^{\lambda (e_i \cdot \ell)}$.
\begin{lem}\label{LemmaA} Let $l \in \N$ we have that 
    \begin{equation*}
        T^{(l)}<\infty \text{ a.s. }
    \end{equation*}
\end{lem}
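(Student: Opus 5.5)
By induction on $l$ together with the strong Markov property, it suffices to show that $T_L\wedge T_{-L}<\infty$ a.s., uniformly in the starting configuration. Here the starting configuration means the pair (environment $\omega$, position of the walk). The case $l=0$ is trivial since $T^{(0)}=0$. Suppose $T^{(l)}<\infty$ a.s. The pair $(\omega_{T^{(l)}},\tilde X_{T^{(l)}})$ at the stopping time $T^{(l)}$ is some configuration. The future evolution of the environment is again governed by independent Poisson clocks, by the memoryless property. Hence, by translation invariance, the conditional law of $(T_L\wedge T_{-L})\circ\theta_{T^{(l)}}$ is that of $T_L\wedge T_{-L}$ under $\P^{\lambda,\mu}_{\omega',0}$ for a random $\omega'\in\Omega$. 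So it is enough to prove
\[
\sup_{\omega\in\Omega}\P^{\lambda,\mu}_{\omega,0}(T_L\wedge T_{-L}>n)\xrightarrow[n\to\infty]{}0 .
\]

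To get this uniform bound I will use a forcing argument based on uniform ellipticity. From any site $x$ and at any time, the probability that the next jump goes in direction $e_1$ is at least
\[
p_*:=\frac{e^{\lambda (e_1\cdot\ell)}\kappa^{-1}}{\kappa Z_\lambda}>0 ,
\]
whatever the environment, because every conductance lies in $[\kappa^{-1},\kappa]$ and the tilted weights are bounded accordingly. This holds conditionally on the whole past, including the environment dynamics. Now cut the discrete time axis into blocks of length $2L$. Whatever the history up to the start of a block, if the walk has not yet left the slab $\{|x\cdot e_1|<L\}$, then with conditional probability at least $p_*^{2L}$ the $2L$ steps of the block are all $+e_1$. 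In that case the walk reaches $H_L$ during the block, since it starts from $x\cdot e_1>-L$ and needs fewer than $2L$ steps. Iterating over blocks gives
\[
\P^{\lambda,\mu}_{\omega,0}\bigl(T_L\wedge T_{-L}>2Lk\bigr)\le (1-p_*^{2L})^k ,
\]
uniformly in $\omega$. This geometric (exponential) tail yields finiteness, and in fact moments of all orders. Combined with the first paragraph, it gives $T^{(l)}<\infty$ a.s. for every $l\in\N$.

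The main point needing care is the conditioning step. The lower bound $p_*$ has to hold conditionally on the full past $\sigma$-field, which includes the refreshing clocks and the environment. This is true because, given the current environment at the jump time $T_k$, the jump is chosen according to $p^{\lambda,\omega}_{T_k}$. That choice is bounded below deterministically by ellipticity, so the bound is available for any conditioning. No estimate such as Lemma \ref{Lemma 2} is needed here; that lemma will instead matter later, to compare how often the walk exits through $H_L$ versus $H_{-L}$.
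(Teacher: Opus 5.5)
Your proof is correct and follows the same route as the paper. You reduce, via the Markov property at $T^{(l)}$, to showing $T_L\wedge T_{-L}<\infty$ for every starting environment. Then you use uniform ellipticity to force $2L$ consecutive $+e_1$ steps with probability at least $\bigl(e^{\lambda e_1\cdot\ell}/(\kappa^2 Z_\lambda)\bigr)^{2L}$, which gives a geometric tail. Your block decomposition and explicit conditioning on the full past simply spell out what the paper compresses into ``$N(T_L\wedge T_{-L})$ is dominated by $2L$ times a geometric random variable.''
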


\begin{proof}
    It is enough to show that
    \begin{eqnarray*}
    \P^{\lambda,\mu}_{\omega,0}(T_L\wedge T_{-L}<\infty)=1
    \qquad
    \forall \omega \in \Omega.
    \end{eqnarray*}
    Let $(\widetilde{X}_n)_{n\geq0}$ be the discretization of $(X_t)_{t\geq0}$. Then define
    \begin{eqnarray*}
    A_L=\left\{
    \widetilde{X}_{2L}-\widetilde{X}_0=2L\cdot e_1
    \right\}
    \quad\text{and}\quad
    N(T_L\wedge T_{-L})
    =\#\{\text{jumps up to }T_L\wedge T_{-L}\}.
    \end{eqnarray*}
    We observe that
    \begin{eqnarray*}
    \P^{\lambda,\mu}_{\omega,0}(A_L)
    \geq
    \left(\frac{e^{\lambda e_1 \cdot \ell}}{\kappa^2 Z_{\lambda}}\right)^{2L}
    >0.
    \end{eqnarray*}
    Hence, $N(T_L\wedge T_{-L})$ is dominated by $2L$ times a geometric random variable and
    \begin{eqnarray*}
    \P^{\lambda,\mu}_{\omega,0}(T_L\wedge T_{-L}<\infty)=1
    \qquad
    \forall \omega\in\Omega.
    \end{eqnarray*}
\end{proof}
Now we are ready to complete the proof of Theorem \ref{thm1}.
\begin{proof}[Proof of Theorem \ref{thm1}]
    Define $Y_k=X_{T^{(k)}} \cdot e_1$ and $\omega_k=\omega_{T^{(k)}}$.
    Using Lemma \ref{Lemma 2} and \ref{LemmaA}, we have that $(Y_k)_{k\geq0}$ dominates a random walk $(Z_k)_{k\geq0}$ with jump probabilities $\P(Z_{n+1}=Z_n+L)
    =
    1-\P(Z_{n+1}=Z_n-L)
    =
    \frac35.$
    
    $(Z_n)_{n\geq0}$ is transient to the right so $(Y_n)_{n\geq0}$ is also transient to the right. Using that the jump rate of the continuous random walk $(X_t)_{t\geq0}$ is constant $1$, we obtain
    \begin{eqnarray*}
    \lim_{t\to\infty}X_t \cdot e_1=+\infty
    \qquad
    \text{a.s.}
    \end{eqnarray*}
\end{proof}

\section{Central limit theorems}
\subsection{Annealed CLT}

With the same strategy employed in Lemma \ref{regenerativSpeed} (by e.g.\ running the argument with $X_t \cdot e_i$ for all $i=1, \dots, d$) one can show that 
\begin{equation*}
	\lim_{t \to \infty} \frac{X_t}{t} = \tilde{v}(\lambda, \ell, \mu) = \frac{\mathbb{E}^{\lambda, \mu}[X_{\tau_1}]}{\mathbb{E}^{\lambda, \mu}[\tau_1]}.
\end{equation*}
Observe that $\tilde{v}(\lambda, \ell, \mu)$ is a deterministic $d$-dimensional vector. One can show that the covariance matrix 
\begin{equation*}
\Sigma \coloneqq \mathbb{E}^{\lambda, \mu}\left[(X_{\tau_1} - \mathbb{E}^{\lambda, \mu}[X_{\tau_1}]) (X_{\tau_1} - \mathbb{E}^{\lambda, \mu}[X_{\tau_1}])^t \right],
\end{equation*}
is a $d\times d$ dimensional matrix of full rank. To confirm this last claim, observe that there is positive probability that before $\tau_1$ the random walk jumps only once and with strictly positive probability the jump is in any of the $2d$ possible directions. Hence, there cannot be any collinearity in the covariance matrix.

\begin{thm}[\textit{Annealed CLT}] \label{CLTVBRW} Let $\lambda \geq 0$, $\mu >0$ and $q$ be an arbitrary distribution on $(0, \infty)$. Under the annealed law $\mathbb{P}$ we have
	\begin{eqnarray} 
		\left(\frac{X_{nt}- \tilde{v}(\lambda, \ell, \mu) tn}{\sqrt{n}}\right)_{t `in [0, 1]}  \xrightarrow[t \to \infty]{(d)} (B_t)_{t \in [0, 1]}.
	\end{eqnarray}
Here $(B_t)_{t \in [0, 1]}$ denotes a centred Brownian Motion with covariance matrix $\tilde{\Sigma}\coloneqq \Sigma/\mathbb{E}^{\lambda, \mu}[\tau_1]$ and the convergence holds in Skorokhod $J_1$ topology.
\end{thm}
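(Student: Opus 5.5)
The plan is the standard renewal argument for a functional CLT, built on the i.i.d.\ structure given by the regeneration times $\{\tau_k\}$ from \eqref{eqn:RegTimes}. Set $\xi_k \coloneqq X_{\tau_k} - X_{\tau_{k-1}} - \tilde v(\lambda,\ell,\mu)(\tau_k - \tau_{k-1})$ for $k\ge 1$. These are i.i.d.\ with mean zero, since $\tilde v = \mathbb{E}[X_{\tau_1}]/\mathbb{E}[\tau_1]$. They also have finite exponential moments: $|X_{\tau_1}|\le N(\tau)$, and both $N(\tau)$ and $\tau_1$ have exponential tails by Lemma~\ref{lemma:ExpTail}. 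Donsker's invariance principle then gives
\begin{equation*}
\Big(\tfrac{1}{\sqrt n}\textstyle\sum_{k\le \lfloor ns\rfloor}\xi_k\Big)_{s\ge 0}\Rightarrow (W_s)_{s\ge0}
\end{equation*}
in $J_1$, where $W$ is a Brownian motion with covariance $\Sigma' = \mathbb{E}[\xi_1\xi_1^t]$. Here $\Sigma'$ should be the centred covariance of $X_{\tau_1}-\tilde v\tau_1$. I will make sure the covariance in the statement is read as this one: the displayed $\Sigma$ only accounts for the spatial part, and the correction from the randomness of $\tau_1$ must be kept.

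Next I use the time change. Let $n(t)=\sup\{k:\tau_k\le t\}$ as in Lemma~\ref{regenerativSpeed}. Then
\begin{equation*}
X_{nt}-\tilde v\, nt = \sum_{k\le n(nt)}\xi_k + \big(X_{nt}-X_{\tau_{n(nt)}}\big) - \tilde v\big(nt-\tau_{n(nt)}\big).
\end{equation*}
By the renewal theorem, $\sup_{t\le 1}|n(nt)/n - t/\mathbb{E}[\tau_1]|\to 0$ almost surely. This follows from the SLLN for $\tau_k$ combined with the monotonicity of $n(\cdot)$, which upgrades pointwise convergence to uniform convergence.

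The remainder term is bounded uniformly on $[0,1]$ by $(1+|\tilde v|)\max_{k\le n(n)+1}\big(N([\tau_{k-1},\tau_k])+(\tau_k-\tau_{k-1})\big)$. Using the exponential tails and a union bound over $O(n)$ indices, this maximum is $O(\log n)$ with high probability, so after dividing by $\sqrt n$ it tends to zero in probability uniformly.

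Finally, composition of the Donsker limit with the time change $s\mapsto n(ns)/n$ is continuous at continuous limits (the random time change lemma, as in Billingsley). This gives convergence to $(W_{t/\mathbb{E}[\tau_1]})_{t\in[0,1]}$, which is a Brownian motion with covariance $\Sigma'/\mathbb{E}[\tau_1]$. Full rank is inherited from the observation preceding the theorem.

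There is no real obstacle here. The only delicate points are controlling the uniform remainder, which needs the maximum of $n$ exponentially tailed variables, and applying the random time change lemma correctly in $J_1$.
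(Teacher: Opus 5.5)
Your argument is correct, and it is organised differently from the paper's. The paper proves only the one-time convergence of $(X_t-\tilde v t)/\sqrt t$. It does so through a three-term decomposition with a pointwise Chebyshev bound on $X_t-X_{\tau_{l(t)}}$, states that finite-dimensional distributions follow in the same way, and imports tightness from Sznitman's Theorem 4.1. You instead apply Donsker to the i.i.d.\ renewal--reward increments $\xi_k$ and compose with the time change $s\mapsto n(ns)/n$. This gives the functional statement, tightness included, in one step; the only extra input is your uniform remainder bound, namely that the maximum of $O(n)$ exponentially tailed block sizes is $O(\log n)$.

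More importantly, your reading of the covariance is the right one. The paper centres $X_{\tau_{l(t)}}$ at $\tilde v\, l(t)\mathbb{E}[\tau_1]$ (written $\tilde v\,\mathbb{E}[\tau_{l(t)}]$), which does give $\Sigma=\mathrm{Cov}(X_{\tau_1})$ for that term. It then treats $t-l(t)\mathbb{E}[\tau_1]$ as bounded by $\mathbb{E}[\tau_1]$. That bound holds for the random $\tau_{l(t)}$, but not for $l(t)\mathbb{E}[\tau_1]$: its distance to $t$ is of order $\sqrt t$ and correlated with $X_{\tau_{l(t)}}$. Keeping this term yields exactly your $\Sigma'=\mathbb{E}[(X_{\tau_1}-\tilde v\tau_1)(X_{\tau_1}-\tilde v\tau_1)^t]$. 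This equals $\Sigma$ when $\tilde v=0$ (e.g.\ $\lambda=0$, the case used for the Einstein relation), but not in general. For instance, for $\lambda>0$ and $\mu$ large one has $N(\tau)=1$ with high probability, and $\Sigma'-\Sigma$ is close to $\tilde v\tilde v^t\neq 0$.

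One small imprecision on your side: full rank of $\Sigma'$ is not literally inherited from the remark about $\Sigma$. The same one-jump argument does give it, since on $\{N(\tau)=1\}$ one has $\xi_1=e_i-\tilde v\tau_1$ with $\tau_1$ non-degenerate. In any case, full rank is not needed for the theorem.
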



\begin{proof}
    The proof follows closely the proof of the annealed CLT for the variable speed random walk on dynamical percolation \cite{percolation} and dynamical conductances \cite{couillard}. 
    The $(\tau_n)_{n \geq 0}$ are regeneration times so $X_{\tau_{n}}$ is a sum of $n$ i.i.d. random variables.
    \begin{eqnarray} \label{annealedCLT}
        \frac{X_{\tau_n}- \tilde{v}(\lambda, \ell, \mu) \tau_n}{\sqrt{n}}  \xrightarrow[n \to \infty]{(d)} \mathcal{N}(0, \Sigma)
    \end{eqnarray}
    For $t \geq 0$, let $l(t)= \max\{l \in \N: \tau_l \leq t\}$. $l(t)$ is a renewal process, so by renewal argument we get that
    \begin{eqnarray} \label{eq:renewalVBRW}
        \lim_{t \rightarrow \infty} l(t)/t = \E^{\lambda, \mu}[\tau]^{-1} \quad \text{ a.s}.
    \end{eqnarray} 
    Next we decompose:
    \begin{eqnarray}
        \frac{X_t - \tilde{v}(\lambda, \ell, \mu) t}{\sqrt{t}} = \frac{X_t - X_{\tau_{l(t)}}}{\sqrt{t}} + \frac{X_{\tau_{l(t)}}  - \tilde{v}(\lambda, \ell, \mu) \E^{\lambda, \mu}[\tau_{l(t)}]}{\sqrt{t}} + \frac{\tilde{v}(\lambda, \ell, \mu) (\E^{\lambda, \mu}[\tau_{l(t)}] -t)}{\sqrt{t}}
    \end{eqnarray}
    Now note that $\|X_{t}- X_{\tau_{l(t)}}\| \leq N([\tau_{l(t)},\tau_{l(t)+1}])$, but $N([\tau_{l(t)},\tau_{l(t)+1}])$ has the same law then $N(\tau)$ and so using Lemma \ref{deathbirthchain} it has some exponential moments, For all $\varepsilon >0 $ 
    \begin{eqnarray}
        \P^{\lambda, \mu}(\|X_{t} - X_{\tau_{l(t)}}\| \geq \sqrt{t} \varepsilon) \leq \frac{\E^{\lambda, \mu}[N^2]}{t \varepsilon^2} \xrightarrow[t \to \infty]{} 0,
    \end{eqnarray}
    so
    \begin{eqnarray}
        \frac{X_t - X_{\tau_{l(t)} } }{\sqrt{t}} \xrightarrow[t \to \infty]{(d)} 0.
    \end{eqnarray}
    For the second term we have that 
    \begin{eqnarray*}
        0\leq t - \E^{\lambda, \mu}[\tau_{l(t)}] \leq \E^{\lambda, \mu}[\tau_{l(t) + 1}] - \E^{\lambda, \mu}[\tau_{l(t)}] = \E^{\lambda, \mu}[\tau_{1}],
    \end{eqnarray*}
    so 
    \begin{eqnarray}
        \frac{\tilde{v}(\lambda, \ell, \mu) (\E^{\lambda, \mu}[\tau_{l(t)}] -t)}{\sqrt{t}} \xrightarrow[t \to \infty]{(d)} 0.
    \end{eqnarray}
    Now we decompose
    \begin{eqnarray}
        \frac{X_{\tau_{l(t)}}  - \tilde{v}(\lambda, \ell, \mu) \E^{\lambda, \mu}[\tau_{l(t)}]}{\sqrt{t}} = \frac{X_{\tau_{l(t)}} - \tilde{v}(\lambda, \ell, \mu) \E^{\lambda, \mu}[\tau_{l(t)}]}{\sqrt{l(t)}} \sqrt{\frac{l(t)}{t}}.
    \end{eqnarray}
    Using that by equation \eqref{eq:renewalVBRW} that $\sqrt{\frac{l(t)}{t}} \xrightarrow[t \to \infty]{} \frac{1}{\sqrt{\E[\tau_1]}}$ a.s. 
    Using \eqref{annealedCLT}
    \begin{eqnarray}
        \frac{X_{\tau_{l(t)}} - \tilde{v}(\lambda, \ell, \mu) \E^{\lambda, \mu}[\tau_{l(t)}]}{\sqrt{l(t)}} = \frac{X_{\tau_{l(t)}} -\tilde{v}(\lambda, \ell, \mu) l(t) \E^{\lambda, \mu}[\tau_1 ]}{\sqrt{l(t)}}\xrightarrow[t \to \infty]{(d)} \mathcal{N}\left(0,\Sigma\right) .
    \end{eqnarray}
    Altogether we get 
    \begin{eqnarray}
        \frac{X_t - \tilde{v}(\lambda, \ell, \mu) t}{\sqrt{t}}  \xrightarrow[t \to \infty]{(d)} \mathcal{N}\left(0,\frac{\Sigma}{\E^{\lambda, \mu}[\tau_1]}\right).
    \end{eqnarray}
Observe that obtaining finite dimensional distribution convergence follows the same lines as above using the independence between regeneration intervals. Extending the convergence from marginals to a functional CLT requires a tightness argument which can be found in \cite[Theorem~4.1]{SznitmanCLT}. The proof relies on the existence of second moment of the regeneration times which is guaranteed in our setting by Lemma~\ref{lemma:ExpTail}, we omit the details.
\end{proof}

\subsection{Quenched CLT}

\begin{thm}[\textit{Quenched CLT}] \label{CLTQuenched} Let $\lambda \geq 0$ and $\mu >0$ and $q$ be an arbitrary distribution on $(0, \infty)$. For $\mathbf{P}$-a.e.\ environment $\omega \in \Omega$ we have that, under $P_{\bar{\omega}}^{\lambda}$
	\begin{eqnarray} 
	\left(\frac{X_{nt}- \tilde{v}(\lambda, \ell, \mu) tn}{\sqrt{n}}\right)_{t \in [0, 1]}  \xrightarrow[n \to \infty]{(d)} (B_t)_{t \in [0, 1]}.
\end{eqnarray}
Here $(B_t)_{t \in [0, 1]}$ denotes a centred Brownian Motion with covariance matrix $\tilde{\Sigma}= \Sigma/\mathbb{E}^{\lambda, \mu}[\tau_1]$ and the convergence holds in Skorohod $J_1$ topology.
\end{thm}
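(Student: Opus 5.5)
The plan is to deduce the quenched statement from the annealed CLT, Theorem \ref{CLTVBRW}, through a variance argument in the style of Bolthausen--Sznitman: quenched convergence follows from annealed convergence together with an asymptotic decorrelation of two walks run in the same environment. For bounded Lipschitz $F$ on $D([0,1],\mathbb{R}^d)$, write
\[
B^n_t \coloneqq \frac{X_{nt}-\tilde v(\lambda,\ell,\mu) nt}{\sqrt n}
\qquad\text{and}\qquad
\Phi_n(\bar\omega)\coloneqq E^{\lambda}_{\bar\omega}[F(B^n)].
\]
By Theorem \ref{CLTVBRW}, $\mathbf{E}[\Phi_n]\to E[F(B)]$. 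It therefore suffices to show
\[
\sum_k \mathbf{Var}\big(\Phi_{\lfloor b^k\rfloor}\big)<\infty \quad\text{for every } b>1 .
\]
Borel--Cantelli then gives almost sure convergence along the geometric subsequences $\lfloor b^k\rfloor$. To pass to all $n$, I would interpolate between consecutive terms of the subsequence using the modulus of continuity of the paths (the annealed tightness already available), let $b\downarrow 1$, and take a countable convergence-determining family of $F$.

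For the variance, write $\mathbf{Var}(\Phi_n)=\mathbb{E}\otimes[F(B^{n,1})F(B^{n,2})]-\mathbb{E}[F(B^n)]^2$. Here $X^1$ and $X^2$ are two independent walks (independent Poisson clocks and independent jump uniforms) in the same dynamical environment $\bar\omega$, both started at $0$. The goal is to couple this pair with two walks in independent environments, up to an error $O(n^{-\delta})$, which is summable along geometric sequences. The key point is that the environment is i.i.d.\ in space and refreshes at rate $\mu$. Consequently the two walks interact only through edges both of them have seen, and each such edge forgets its state after an $\mathrm{Exp}(\mu)$ time. I would use a joint infected-set construction as in \eqref{eqn:InfectedSet}, with separate infection marks for each walk. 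The two walks can then be realised with independent environments except on the event that one walk visits an edge still carrying an infection mark from the other. This requires the two walks to be at the same location within roughly $O(\log n)$ time units of each other, because of the exponential tails in Lemma \ref{lemma:ExpTail} and Lemma \ref{deathbirthchain}.

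The main obstacle is controlling the amount of such space-time intersection. I would first try the following estimate: the expected number of times $s\le n$ at which $\|X^1_s-X^2_s\|\le C\log n$ is at most $n^{1-\delta}$ in $d\ge 2$. Combining this with $O(1)$-time decoupling after each meeting yields a total-variation error of $o(1)$ only. That would require a more refined argument, namely that the difference $X^1-X^2$ sampled at joint regeneration times behaves like a centred random walk with nondegenerate covariance (the covariance is full rank, as shown before Theorem \ref{CLTVBRW}). Its local time near the origin is then $O(\sqrt n)$ in $d=1$ and $O(\log n)$ in $d=2$. Since the meetings only perturb an $O(1)$ number of steps each, the perturbation of $B^n$ is bounded by $n^{-1/2}$ times the local time. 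Using that $F$ is Lipschitz, this gives $\mathbf{Var}(\Phi_n)\le C n^{-1/4}$, say, even in $d=1$, which is summable along geometric $n$.

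The delicate points are twofold. One is constructing joint regeneration times for the pair, which should follow from Lemma \ref{deathbirthchain} applied to the union of the two infected sets. The other is a local limit type bound for the difference walk, which is not Markov on its own; here I would use the joint regenerations to reduce it to an i.i.d.\ increment walk, perturbed only while the walks are close.
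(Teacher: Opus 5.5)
Your reduction to the summability of $\mathbf{Var}(\Phi_{\lfloor b^k\rfloor})$ via Bolthausen--Sznitman matches the paper. Your variance estimate, however, breaks down in $d=1$, which the theorem covers.

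\paragraph{Where the estimate fails.} You bound the path perturbation caused by meetings by $n^{-1/2}$ times the local time $K_n$ of the difference walk near the origin. You also note yourself that $K_n$ is of order $\sqrt n$ in $d=1$. The resulting bound $\mathbb{E}[\min(2, Cn^{-1/2}K_n)]$ is therefore of order one, not $n^{-1/4}$, because $K_n/\sqrt n$ does not vanish in probability. The problem is structural. Coupling the same-environment pair with an independent-environment pair controls $\mathbb{E}^{\mathrm{same}}[F(B^{n,1})F(B^{n,2})]-\mathbb{E}^{\mathrm{ind}}[F(B^{n,1})F(B^{n,2})]$ only to first order in the path perturbation. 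To get decay from it in $d=1$, the errors produced at different meetings would have to cancel, i.e.\ be conditionally centred. Nothing in your argument gives this, and it is not automatic. The law of one walk's increment over a joint regeneration interval differs between the two pairings, because the joint regeneration times themselves do. Moreover, conditionally on the joint past, the other walk's observations carry information about the environment the first walk is about to meet.

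\paragraph{The missing idea.} You need a second-order bound, in which each local influence enters squared. The paper writes $\mathbf{Var}(E^{\lambda}_{\bar\omega}[F(\{Z\}_0^n)\mathds{1}\{E_n\}])$ as a sum of squared Doob martingale increments $\Delta_{z,k}$ along the space--time filtration $\widetilde{\mathcal F}_{k,z}$. Only the part of the quenched expectation on $\{X \text{ sees } z,k\}$ contributes to $\Delta_{z,k}$. That part would contribute nothing if $X$ were replaced by the cut-and-glued path $\widetilde X$, whose rescaled path is within $C\log^2 n/\sqrt n$ of that of $X$ on a high-probability event. Hence $|\Delta_{z,k}|$ is at most $\frac{C\log^2 n}{\sqrt n}$ times a conditional expectation of $P^\lambda_{\bar\omega}(X\text{ sees }z,k)$, plus a negligible term. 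Squaring gives $\mathbf{Var}\le\frac{C\log^4 n}{n}$ times the expected number of joint sightings by two walks in the same environment. That number is $O(n^{1/2+\varepsilon})$ in every dimension (Proposition~\ref{prop:Crossings}), so $\mathbf{Var}\le Cn^{-1/2+\varepsilon}$ and the $\sqrt n$ intersections in $d=1$ are harmless. This route needs only a crude non-intersection estimate, with no local-time or local limit bounds for the non-homogeneous difference chain.

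In $d\ge 2$, where $K_n$ is polylogarithmic, your first-order coupling is plausible once you control the spatial and temporal offsets that accumulate after each discrepancy. As it stands, though, it does not prove the statement in $d=1$.
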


Some results proved here could be derived by applying the theorems of \cite{ZeitouniQuenched} (in $d>7$), by modifying the approaches of \cite{BMP1, BMP3, BMP2} and \cite{RAS}. Our approach follows \cite{ZeitouniQuenched} and extends the strategy to all dimensions under our strong mixing properties, which we believe could be somewhat relaxed.

To prove this theorem we employ a classical strategy of \cite{BolthSznit}. The section is devoted to the preparations of the proof, which is completed at the end of the section.

We define the sigma-algebra
\begin{equation}\label{eqn:TimeSigmaAlgebra}
	\widetilde{\mathcal{F}}_{n, k} \coloneqq \sigma\left\{ \omega_s(e): 0\leq s \leq k \text{ and } e \in E(\zd \cap B_1(0, n)) \right\}.
\end{equation}
Note that
\begin{equation*}
	\widetilde{\mathcal{F}}_{n, k}  = \sigma\left( \widetilde{\mathcal{F}}_{n, k-1} \cup \bigcup_{e \in B_1(0, n)} \sigma\left\{ \omega_s(e): k-1\leq s \leq k\right\} \right).
\end{equation*}
Let us consider the vertices of $\mathbb{Z}^d$ in the lexicographical ordering (for $x \in \mathbb{Z}^d$ let $p(x)$ denote the predecessor of $x$ in such ordering). We denote
\begin{equation}\label{eqn:TimeSpaceSigmaAlgebra}
	\widetilde{\mathcal{F}}_{n, k, x} \coloneqq \sigma \left( \widetilde{\mathcal{F}}_{n, k-1} \cup \bigcup_{y \in B_1(0, n), y \preceq x} \sigma\left\{ \omega_s(e): k-1\leq s \leq k \text{ and } e \sim y \right\}\right).
\end{equation}
We also set the convention $\widetilde{\mathcal{F}}_{n, k, p(0)} \equiv \widetilde{\mathcal{F}}_{n,k-1}$. 

For the $\sigma$-algebra $\widetilde{\mathcal{F}}_{m, k, x}$ let $\mathbf{E}_{k, x}^{(m)} [\cdot] = \mathbf{E}[\cdot \mid \widetilde{\mathcal{F}}_{m, k, x}]$. Let $F \colon C([0, 1], \mathbb{R}^d) \to \mathbb{R}$ be Lipschitz and bounded. We assume, without loss of generality, that both the Lipschitz constant and the supremum norm are $1$. $\{X\}_0^n$ serves as a shortcut for the path of the random walk between time $0$ and $n$ and $\{Z\}_0^n$ for the polygonal interpolation of the process
\begin{equation}\label{eqn:Scaling}
    \left\{ \frac{X_{nt} - \tilde{v}(\lambda, \ell, \mu)nt}{\sqrt{n}} \right\}_{t \in [0, 1]}.
\end{equation}

Let us introduce the event 
\begin{equation}
	E_n \coloneqq \{T_n \le 2n\}.
\end{equation}
It is easy to see that $\mathbf{P}$-almost surely we have that for some $c>0$
\begin{equation*}
P_{\bar{\omega}}^\lambda\left(T_n > 2n\right) \le e^{-cn}.
\end{equation*}
From the boundedness of $F$ we can deduce that
\begin{equation}\label{eqn:Other}
	\mathbf{Var}\left( E_{\bar{\omega}}^{\lambda}\left[ F(\{Z\}_0^n) \right] \right) \le \mathbf{Var}\left( E_{\bar{\omega}}^{\lambda}\left[ F(\{Z\}_0^n) \mathds{1}\{E_n\}\right] \right) + Ce^{-cn}.
\end{equation}
Furthermore, we observe that $E_{\bar{\omega}}^{\lambda}\left[ F(\{Z\}_0^n) \mathds{1}\{E_n\}\right]$ is $\widetilde{\mathcal{F}}_{2n}$ measurable. From now on, to shorten the notation, we fix the convention $\widetilde{\mathcal{F}}_{k, z} \equiv \widetilde{\mathcal{F}}_{2n, k, z}$ and similarly $\mathbf{E}_{k, z} \equiv \mathbf{E}_{k, z}^{(2n)}$.

\begin{lem}\label{lemma:Doob}
    We have that
    \begin{equation*}
        \mathbf{Var}\left( E_{\bar{\omega}}^{\lambda}\left[ F(\{Z\}_0^n)\mathds{1}\{E_n\}\right] \right) = \sum_{k = 1}^{2n} \sum_{z} \mathbf{E}[(\Delta_{z, k})^2],
    \end{equation*}
where we set
\begin{equation*}
    \Delta_{z, k} \coloneqq \left(\mathbf{E}_{k, z} \left[ E_{\bar{\omega}}^{\lambda} \left[ F(\{Z\}_0^n) \mathds{1}\{E_n\}\right]\right] -\mathbf{E}_{k, p(z)} \left[ E_{\bar{\omega}}^{\lambda} \left[ F(\{Z\}_0^n)\mathds{1}\{E_n\} \right]\right]\right).
\end{equation*}
\end{lem}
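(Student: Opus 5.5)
This is the standard martingale-difference decomposition of the variance, so the plan is to telescope along the filtration introduced above. Write $G \coloneqq E_{\bar{\omega}}^{\lambda}[F(\{Z\}_0^n)\mathds{1}\{E_n\}]$. As observed before the statement, $G$ is bounded (by $1$) and measurable with respect to the environment in the ball $B_1(0,2n)$ up to time $2n$. On the event $E_n$ the walk stays within distance $2n$ of the origin, and only the clock-ring times up to $2n$ matter. I would first check this measurability claim carefully, since the jump rule only reads $\omega_{T_k}$ on edges adjacent to the current position. Hence $G$ is $\widetilde{\mathcal{F}}_{2n,2n}$-measurable, so $\mathbf{E}[G \mid \widetilde{\mathcal{F}}_{2n,2n}] = G$.

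Next I order the $\sigma$-algebras into a single increasing chain. Start from $\widetilde{\mathcal{F}}_{2n,0}$, which is generated by the initial configuration $\omega_0$ on $B_1(0,2n)$. For each $k=1,\dots,2n$, run through the finitely many $z \in B_1(0,2n)$ in lexicographic order, giving $\widetilde{\mathcal{F}}_{k,z}$. The convention $\widetilde{\mathcal{F}}_{k,p(0)}=\widetilde{\mathcal{F}}_{2n,k-1}$, read with $0$ denoting the first vertex of the ball in that order, and the fact that the last vertex yields $\widetilde{\mathcal{F}}_{2n,k}$, glue consecutive time layers together, since by definition $\widetilde{\mathcal{F}}_{2n,k}$ is generated by $\widetilde{\mathcal{F}}_{2n,k-1}$ and all edges in the ball over $[k-1,k]$. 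This yields a finite filtration. The terms $M_{k,z} \coloneqq \mathbf{E}_{k,z}[G]$ form a Doob martingale along it, and $G - \mathbf{E}[G\mid\widetilde{\mathcal{F}}_{2n,0}] = \sum_{k=1}^{2n}\sum_z \Delta_{z,k}$ is a telescoping sum.

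The increments $\Delta_{z,k}$ are square integrable and orthogonal in $L^2(\mathbf{P})$. Indeed, for an earlier index $(k',z')$, the increment $\Delta_{z',k'}$ is $\widetilde{\mathcal{F}}_{k,p(z)}$-measurable and $\mathbf{E}_{k,p(z)}[\Delta_{z,k}]=0$. Expanding the square of the sum therefore kills the cross terms and gives $\mathbf{E}[(G-\mathbf{E}[G\mid\widetilde{\mathcal{F}}_{2n,0}])^2]=\sum_{k,z}\mathbf{E}[\Delta_{z,k}^2]$.

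The remaining point is the time-$0$ contribution $\mathbf{Var}(\mathbf{E}[G\mid \widetilde{\mathcal{F}}_{2n,0}])$. I would handle it either by reading the statement with the initial layer included as $k=0$ (equivalently, by absorbing $\omega_0$ into the first layer), or by noting that the formula holds with $\mathbf{Var}$ replaced by the conditional variance given $\omega_0$. I expect this bookkeeping of the base $\sigma$-algebra to be the only real obstacle, since everything else is the routine Pythagorean identity for martingale differences. The natural fix is to treat $\widetilde{\mathcal{F}}_{2n,0}$ as containing the initial layer as its first step in the chain, starting from the trivial $\sigma$-algebra, so that $\mathbf{E}[G\mid\text{trivial}]=\mathbf{E}[G]$. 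Then $\mathbf{Var}(G)=\mathbf{E}[(G-\mathbf{E}G)^2]$ equals the full sum of squared increments.
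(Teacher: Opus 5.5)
Your proposal is correct and follows the same route as the paper, whose proof simply invokes the Doob martingale decomposition along the filtration $(\widetilde{\mathcal{F}}_{k,z})$, with integrability coming from the boundedness of $F$. Your bookkeeping remark is well founded: $\widetilde{\mathcal{F}}_{2n,0}$ is generated by $\omega_0$ on the ball and is not trivial, so the identity as literally stated omits the term $\mathbf{Var}\bigl(\mathbf{E}[G\mid\widetilde{\mathcal{F}}_{2n,0}]\bigr)$, and absorbing $\omega_0$ into the $k=1$ layer (starting the chain from the trivial $\sigma$-algebra) is the right repair, one that the paper's one-line proof glosses over.
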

\begin{proof}
    The lemma follows in a standard way from the Doob's martingale construction. We observe that the necessary integrability condition follows from the boundedness of $F$.
\end{proof}

We set the notations $\mathcal{V}_z \coloneqq \{x \in \mathbb{Z}^d \colon x \sim z\}$ and recall that $\mathcal{E}_z = \{e \in E(\mathbb{Z}^d) \colon e \sim z\}$. Moreover,
\begin{equation*}
    \{X \text{ sees }z, k \} \coloneqq \{\exists s \in  [k-1, k) \colon X_s \in \mathcal{V}_z\}\cup  \left\{ \exists s \ge k \colon X_s \in \mathcal{V}_z \text{ and } R(e, s) = R(e, k) \text{ for some } e \in \mathcal{E}_z \right\} .
\end{equation*}
We recall that $R(e, s)$ is the value of the Poisson process governing the re-sampling of the conductances associated to the edge $e$ at time $s>0$. This means that, in words, $\{X \text{ sees }z, k \}$ is the event that either the walk is at the neighbourhood of $z$ during the time interval $[k-1, k)$ or after time $k$, the random walk evaluates again the edge $e$ without its value being re-sampled.

\begin{lem}\label{lem:See}
    We have that
    \begin{align*}
        \mathbf{E}_{k, z} &\left[ E_{\bar{\omega}}^{\lambda} \left[ F(\{Z\}_0^n)\mathds{1}\{E_n\} \right]\right] -\mathbf{E}_{k, p(z)} \left[ E_{\bar{\omega}}^{\lambda} \left[ F(\{Z\}_0^n)\mathds{1}\{E_n\} \right]\right] \\
        &= \mathbf{E}_{k, z} \left[ E_{\bar{\omega}}^{\lambda} \left[ F(\{Z\}_0^n)\mathds{1}\{E_n\}\mathds{1}\{X \text{ sees }z, k \} \right]\right] -\mathbf{E}_{k, p(z)} \left[ E_{\bar{\omega}}^{\lambda} \left[ F(\{Z\}_0^n)\mathds{1}\{E_n\}\mathds{1}\{X \text{ sees }z, k \} \right]\right].
    \end{align*}
\end{lem}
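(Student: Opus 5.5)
Write $G \coloneqq F(\{Z\}_0^n)\mathds{1}\{E_n\}$ and $\mathcal{S} \coloneqq \{X \text{ sees } z,k\}$. We decompose $G = G\mathds{1}_{\mathcal{S}} + G\mathds{1}_{\mathcal{S}^c}$. By linearity of the conditional expectations, it then suffices to prove
\begin{equation*}
\mathbf{E}_{k,z}\left[E^{\lambda}_{\bar\omega}[G\mathds{1}_{\mathcal{S}^c}]\right] = \mathbf{E}_{k,p(z)}\left[E^{\lambda}_{\bar\omega}[G\mathds{1}_{\mathcal{S}^c}]\right].
\end{equation*}
The $\sigma$-algebras differ only in the information $\mathcal{G}_z \coloneqq \sigma\{\omega_s(e): s\in[k-1,k],\ e\in\mathcal{E}_z\}$. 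Indeed, $\widetilde{\mathcal{F}}_{k,z} = \sigma(\widetilde{\mathcal{F}}_{k,p(z)}\cup\mathcal{G}_z)$, where edges already revealed at earlier sites contribute nothing new. So the claim is that $E^{\lambda}_{\bar\omega}[G\mathds{1}_{\mathcal{S}^c}]$ carries no dependence on $\mathcal{G}_z$ beyond what $\widetilde{\mathcal{F}}_{k,p(z)}$ already contains.

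\textbf{Resampling coupling.} I realise this with a resampling argument. Let $\bar\omega'$ be the environment obtained from $\bar\omega$ as follows. On the edges $e\in\mathcal{E}_z$ not already in $\widetilde{\mathcal{F}}_{k,p(z)}$, replace the values during $[k-1,k)$, together with the values after time $k$ up to and including the first refresh epoch after $k$ (i.e. the whole segment of $\omega_\cdot(e)$ that still "remembers" the conductance at time $k$), by an independent copy. The copy has the same conditional law given $\widetilde{\mathcal{F}}_{k,p(z)}$. Since the refresh mechanism is a Poisson process with i.i.d.\ marks, this gives $\bar\omega' \overset{d}{=} \bar\omega$ conditionally on $\widetilde{\mathcal{F}}_{k,p(z)}$. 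Moreover, $\bar\omega'$ is independent of $\mathcal{G}_z$ given $\widetilde{\mathcal{F}}_{k,p(z)}$. Consequently
\begin{equation*}
\mathbf{E}_{k,z}\left[E^{\lambda}_{\bar\omega'}[G\mathds{1}_{\mathcal{S}^c}]\right] = \mathbf{E}_{k,p(z)}\left[E^{\lambda}_{\bar\omega}[G\mathds{1}_{\mathcal{S}^c}]\right].
\end{equation*}

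\textbf{Pathwise identity.} We couple the walks under $P^\lambda_{\bar\omega}$ and $P^\lambda_{\bar\omega'}$ using the same jump clock $\mathcal{P}$ and the same uniforms $\{U_n\}$, as in the proof of Proposition~\ref{prop:Asympt}. On $\mathcal{S}^c$, computed for the first walk, that walk never reads a conductance in $\mathcal{E}_z$ while its value belongs to the resampled segment. This holds because it is never in $\mathcal{V}_z$ during $[k-1,k)$, and any later visit to $\mathcal{V}_z$ occurs after each edge of $\mathcal{E}_z$ has refreshed. Hence every transition probability used coincides under $\bar\omega$ and $\bar\omega'$, and the two trajectories agree. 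Since $\mathcal{S}$ depends only on the trajectory and the refresh clocks, which we keep common to both environments, we get $\mathcal{S}^c(\bar\omega) = \mathcal{S}^c(\bar\omega')$. Therefore $G\mathds{1}_{\mathcal{S}^c}$ is pathwise identical, and so
\begin{equation*}
E^{\lambda}_{\bar\omega}[G\mathds{1}_{\mathcal{S}^c}] = E^{\lambda}_{\bar\omega'}[G\mathds{1}_{\mathcal{S}^c}].
\end{equation*}
Combining this with the previous display yields the claim.

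\textbf{Main obstacle.} The main obstacle is the bookkeeping in the pathwise identity. One has to check that the event $\mathcal{S}$ is exactly the event that the walk queries one of the resampled values. Transition probabilities at a site $x$ depend on edges adjacent to $x$, so the relevant positions are the $x \in \mathcal{V}_z$ (and $z$ itself, which I would include in $\mathcal{V}_z$ if needed). One also has to check that the refresh epochs are kept common while the marks are resampled, so that $\mathcal{S}$ is invariant under the swap. I would handle this by working with the graphical representation: Poisson refresh clocks plus i.i.d.\ marks, with only the marks on the segment being resampled.
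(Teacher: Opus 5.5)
Your proposal is correct and follows the same idea as the paper's proof: split along $\{X \text{ sees } z,k\}$ and show that the contribution of the complement is insensitive to the extra information in $\widetilde{\mathcal{F}}_{k,z}$. The paper asserts this for indicators of paths with prescribed jump times and extends by a monotone class argument; your resampling coupling with common walk randomness proves it directly for bounded $G$, and it makes explicit that the conductance revealed at time $k$ persists until the next refresh (which is why the event has its second clause). One bookkeeping correction: resample both the refresh epochs and the marks on $[k-1,k]$, and share only the clock increments after time $k$, which is all $\{X \text{ sees } z,k\}$ depends on. If the epochs in $[k-1,k]$ were kept common, as your last paragraph suggests, $\bar\omega'$ would still carry information contained in $\mathcal{G}_z$, and the claimed conditional independence given $\widetilde{\mathcal{F}}_{k,p(z)}$ would in general fail.
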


\begin{proof}
	Let $F(\{Z\}_0^n)$ be the indicator function that $\{X\}_0^n$ follows a $m$-long path $\pi_1, \dots, \pi_m$ and $t_1, \dots, t_m$ are jump times. Then, we observe that 
	\begin{equation*}
		E_{\bar{\omega}}^{\lambda} \left[ F(\{Z\}_0^n)\mathds{1}\{E_n\}\mathds{1}\{X \text{ sees }z, k \}^c \right]
	\end{equation*}
	does not depend on the conductances $\{\omega_{s}(e)\}_{s \in [k-1, k)}$ for $e \in \mathcal{E}_z$. Hence,
	\begin{equation*}
		\mathbf{E}_{k, z}\left[E_{\bar{\omega}}^{\lambda} \left[ F(\{Z\}_0^n)\mathds{1}\{E_n\}\mathds{1}\{X \text{ sees }z, k \}^c \right]\right] = \mathbf{E}_{k, p(z)}\left[E_{\bar{\omega}}^{\lambda} \left[ F(\{Z\}_0^n)\mathds{1}\{E_n\}\mathds{1}\{X \text{ sees }z, k \}^c \right]\right],
	\end{equation*}
	yielding the result for such $F$. For general $F$ the result follows from a standard monotone class argument.
\end{proof}
Let us introduce the random variables
\begin{equation*}
    T_{\mathrm{trs}}(n) \coloneqq \inf\{ s \ge 0 \colon E(B_1(0, n)) \text{ has been completely re-sampled}\}.
\end{equation*}
Furthermore, we set $T_{\mathrm{trs}}^0(n) = 0$ and for $j \ge 1$ $T_{\mathrm{trs}}^{j}(n) = T_{\mathrm{trs}}(n) \circ \theta_{T_{\mathrm{trs}}^{j-1}(n)} + T_{\mathrm{trs}}^{j-1}(n)$.

\begin{lem}\label{lemma:Fast}
    For all $M>0$ there exist constants $C_1, C>0$ such that
    \begin{equation*}
        \mathbf{P}\left(\sup_{k = 1, \dots, n^2} \left(T_{\mathrm{trs}}^{k}(n)  - T_{\mathrm{trs}}^{k-1}(n)\right) \ge C_1\log(n) \right) \le Cn^{-M}.
    \end{equation*}
\end{lem}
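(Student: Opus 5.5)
The plan is to reduce the statement to a tail bound for a single complete-resampling time $T_{\mathrm{trs}}(n)$, followed by a union bound over the $n^2$ increments.

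\textbf{Step 1: structure of the increments.} The family $\{R(e,\cdot)\}_e$ consists of independent Poisson processes of rate $\mu$, and $T_{\mathrm{trs}}^{k-1}(n)$ is a stopping time for the filtration they generate. By the strong Markov property, the increments $T_{\mathrm{trs}}^{k}(n)-T_{\mathrm{trs}}^{k-1}(n)$, $k\ge 1$, are therefore i.i.d.\ with the law of $T_{\mathrm{trs}}(n)$. The union bound then gives
\begin{equation*}
\mathbf{P}\Bigl(\sup_{k\le n^2}\bigl(T_{\mathrm{trs}}^{k}(n)-T_{\mathrm{trs}}^{k-1}(n)\bigr)\ge C_1\log n\Bigr)\le n^2\,\mathbf{P}\bigl(T_{\mathrm{trs}}(n)\ge C_1\log n\bigr).
\end{equation*}
In fact independence is not needed for this step; equality of the marginal laws suffices.

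\textbf{Step 2: tail of one resampling time.} Write $\mathcal{N}_n := |E(B_1(0,n))|$, and note that $\mathcal{N}_n\le c_d n^d$. The time $T_{\mathrm{trs}}(n)$ is the maximum of $\mathcal{N}_n$ independent $\mathrm{Exp}(\mu)$ first-ring times, one for each edge. A second union bound yields
\begin{equation*}
\mathbf{P}\bigl(T_{\mathrm{trs}}(n)\ge C_1\log n\bigr)\le c_d n^d e^{-\mu C_1\log n}=c_d n^{d-\mu C_1}.
\end{equation*}

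\textbf{Step 3: choice of constants.} Combining the two steps, the probability in the statement is at most $c_d\, n^{d+2-\mu C_1}$. Choosing $C_1=(M+d+2)/\mu$ and $C=c_d$ gives the claim. The constant $C_1$ depends on $\mu$, $d$ and $M$, which is consistent with the statement. For small $n$, where $\log n$ may be $0$ or small, one can simply enlarge $C$.

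The only point requiring care is Step 1: the definition of $T_{\mathrm{trs}}^{j}$ through the shift $\theta$ must indeed make each increment a fresh copy of $T_{\mathrm{trs}}(n)$. This follows from the memorylessness of the Poisson clocks: after $T_{\mathrm{trs}}^{k-1}(n)$, each edge still has to wait a fresh $\mathrm{Exp}(\mu)$ time before its next ring. I do not expect any genuine obstacle beyond this.
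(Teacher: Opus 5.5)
Your proposal is correct and follows essentially the same route as the paper. Both reduce to a single $T_{\mathrm{trs}}(n)$ via identically distributed increments, union bound over the $O(n^d)$ edges of $B_1(0,n)$ using $\mathbf{P}(\mathrm{Exp}(\mu)\ge C_1\log n)=n^{-\mu C_1}$, and finish with a union bound over the $n^2$ increments, with $C_1$ chosen so that this single-edge probability is at most $n^{-(M+d+2)}$.
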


\begin{proof}
    We observe that the random variables $\{T_{\mathrm{trs}}^{k}(n)  - T_{\mathrm{trs}}^{k-1}(n)\}_{k \ge 1}$ are i.i.d. Hence, we only need to find an upper bound for the probability 
    \begin{equation*}
        \mathbf{P}\left(T_{\mathrm{trs}}^{1}(n) \ge C_1\log(n) \right),
    \end{equation*}
    and then we can adjust $C_1$. Since the clocks associated to each edge are exponentially distributed of parameter $\mu$ we obtain by union bound that 
    \begin{equation*}
        \mathbf{P}\left(T_{\mathrm{trs}}^{1}(n) \ge C_1\log(n) \right) \le 2^d n^{d} \mathbf{P}\left( \exp(\mu) \ge C_1\log(n) \right).
    \end{equation*}
    It follows easily that one can choose $C_1 = C_1(d, \mu, M)$ such that 
    \begin{equation*}
        \mathbf{P}\left( \exp(\mu) \ge C_1\log(n) \right) \le Cn^{-(M + d + 2)}.
    \end{equation*}
    The result follows by taking another union bound over $k = 1, \dots, n^2$. 
\end{proof}
We denote
\begin{equation*}
    \mathrm{Fast}(n) \coloneqq \left\{\sup_{k = 1, \dots, n^2} \left(T_{\mathrm{trs}}^{k}(n)  - T_{\mathrm{trs}}^{k-1}(n)\right) \le C_1\log(n)\right\}
\end{equation*}

Let us define
\begin{equation*}
    \tau_{k}^+ \coloneqq \inf\{\tau_k \colon \tau_k \ge T_{\mathrm{trs}}(n) \circ \theta_{k}\},
\end{equation*}
in words, this is the first regeneration time that happens after all the conductances in $B_1(0, n)$ have been re-sampled at least once after time $k$.

We also introduce the event, for $C_*>1$
\begin{equation*}
	\mathrm{Jump}_n(C_*) \coloneqq \{\tau_{n^2} \ge C_*^{-1}n^{2}\}\cap\{ \sup_{i = 1, \dots, n^2} \tau_{i} - \tau_{i-1} \le C_* \log(n)\}.
\end{equation*}
\begin{lem}\label{lemma:Jump}
	For any $M>0$ there exists $C_2>1$ large enough 
	\begin{equation*}
		\mathbb{P}_0^{\lambda, \mu}(\mathrm{Jump}_n(C_2)^c) \le Cn^{-M}.
	\end{equation*}
	Hence, almost surely for all $n$ large enough the event $\mathrm{Jump}_n(C_2)$ happens.
\end{lem}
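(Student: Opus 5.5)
We split $\mathrm{Jump}_n(C_2)^c$ into the two events in its definition and bound each by a union bound or a large deviation estimate. Both rest on Lemma~\ref{lemma:ExpTail}: the increments $\tau_i-\tau_{i-1}$, $i\ge 1$, are i.i.d.\ under $\mathbb{P}_0^{\lambda,\mu}$, distributed as $\tau_1$, and satisfy $\mathbb{E}[e^{c\tau_1}]<\infty$ for some $c=c(\mu)>0$. The event itself does not involve the bias, so the constants depend only on $\mu$ and $d$.

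\textbf{Maximal increment.} By the union bound and the exponential Chebyshev inequality,
\begin{equation*}
\mathbb{P}_0^{\lambda,\mu}\Big(\sup_{i=1,\dots,n^2}(\tau_i-\tau_{i-1})> C_*\log n\Big)\le n^2\,\mathbb{E}[e^{c\tau_1}]\,e^{-cC_*\log n}=\mathbb{E}[e^{c\tau_1}]\,n^{2-cC_*}.
\end{equation*}
This is at most $Cn^{-M}$ as soon as $C_*\ge (M+2)/c$.

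\textbf{Lower bound on $\tau_{n^2}$.} We have $\tau_{n^2}=\sum_{i=1}^{n^2}(\tau_i-\tau_{i-1})$, a sum of $n^2$ i.i.d.\ nonnegative variables with mean $\mathbb{E}[\tau_1]>0$. In fact $\tau_1$ is at least the first jump time of the walk, which is $\mathrm{exp}(1)$, since the infected set must first become nonempty. We apply the lower-tail Chernoff bound: for $\theta>0$,
\begin{equation*}
\mathbb{P}\big(\tau_{n^2}< C_*^{-1}n^2\big)\le e^{\theta C_*^{-1}n^2}\,\mathbb{E}[e^{-\theta\tau_1}]^{n^2}=\exp\Big(-n^2\big(-\log\mathbb{E}[e^{-\theta\tau_1}]-\theta C_*^{-1}\big)\Big).
\end{equation*}
Fix $\theta=1$. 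Since $\tau_1>0$ a.s., we have $\mathbb{E}[e^{-\tau_1}]<1$, so $a:=-\log\mathbb{E}[e^{-\tau_1}]>0$. Choosing $C_*>2/a$ gives the bound $e^{-an^2/2}$, which is far smaller than $n^{-M}$. No exponential moment is needed here, only nonnegativity.

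\textbf{Conclusion.} Take $C_2=\max\{(M+2)/c,\,2/a,\,2\}$. Adding the two bounds gives $\mathbb{P}_0^{\lambda,\mu}(\mathrm{Jump}_n(C_2)^c)\le Cn^{-M}$. For the almost sure statement, choose $M=2$, so that $\sum_n n^{-2}<\infty$. Borel--Cantelli then shows that almost surely $\mathrm{Jump}_n(C_2)^c$ occurs only finitely often, that is, $\mathrm{Jump}_n(C_2)$ holds for all $n$ large enough.

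There is no genuine obstacle. The only point to check is that the i.i.d.\ structure and the exponential moment from Lemma~\ref{lemma:ExpTail} apply to every increment, including the first; this holds because $\tau_0=0$ and the infected set starts empty.
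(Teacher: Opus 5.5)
Your proposal is correct and follows the same route as the paper. You bound the lower tail of $\tau_{n^2}$ by a Chernoff argument: the paper takes its rate from the fact that $\tau_1$ stochastically dominates an $\mathrm{Exp}(1)$ variable, while you only need $\mathbb{E}[e^{-\tau_1}]<1$. You control the maximal increment by a union bound with the exponential moment from Lemma~\ref{lemma:ExpTail}, and conclude with Borel--Cantelli taking $M>1$, exactly as the paper does.
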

\begin{proof}
	The proof follows from a large deviation statement, union bound and Borel-Cantelli argument. 
	
	Let us begin with the lower bound. We know that $\tau_{n^2} = \sum_{i=1}^n \tau_i - \tau_{i-1}$. Moreover, by construction, $\tau_1$ stochastically dominates an exponential of of mean $1$. We deduce that for all $k \ge 1$
	\begin{equation*}
		\mathbb{P}^{\lambda, \mu}\left(\tau_k \le k/2\right) \le e^{-I(1/2)k},
	\end{equation*}
	for $I(1/2)>0$ (this follows from standard Chernoff bound). Which readily implies that if $C_*^{-1} \le 1/2$ then 
	\begin{equation*}
		\mathbb{P}^{\lambda, \mu}\left(\tau_{n^2} \ge C_*^{-1}n^{2}\right) \le Ce^{-cn^2} \le Cn^{-M}.
	\end{equation*}
	Furthermore, we know that $\tau_1$ admits exponential moments of order $c>0$. Then, clearly
	\begin{equation*}
		\mathbb{P}^{\lambda, \mu}\left(\sup_{i = 1, \dots, n^2} \tau_{i} - \tau_{i-1} \ge C_* \log(n)\right) \le n^2 \mathbb{P}^{\lambda, \mu}\left(\tau_1 \ge C_* \log(n)\right) \le n^2e^{-cC_*\log(n)} \le Cn^{-M}.
	\end{equation*}
	By choosing $C_*=C_2>0$ large enough. The result follows by the first Borel-Cantelli lemma.
\end{proof}

For two paths $\pi_1$ and $\pi_2$ of length $m$ and $k$ respectively, let $\pi_1 \circ \pi_2$ denote their concatenation, i.e.\ for $ 0 \le s \le m+k$ we set
\begin{equation}\label{eqn:ConcatenationPaths}
    \pi_1 \circ \pi_2(s) \coloneqq 
    \begin{cases}
    \pi_1(s) \quad &\text{if } s \le m,\\
    \pi_1(m) + \pi_{2}(s - m) \quad &\text{if } s \in [ m, k+m ].
    \end{cases}
\end{equation}
Let $\{X\}_{a}^{b}$ denote the trajectory of the random walk between times $a$ and $b$, for non-negative real numbers $a\leq b$. We introduce the cut-and-glued path
\begin{equation*}
	\{\widetilde{X}\}_0^n \coloneqq \{X\}_0^{k-1} \circ \{X - X_{\tau_{k}^+}\}_{\tau_{k}^+}^{n + k - \tau_{k}^+}. 
\end{equation*}
Then, let $\{\widetilde{Z}\}_0^n$ be the polygonal interpolant obtained by plugging the path $\{\widetilde{X}\}_0^n$ into \eqref{eqn:Scaling}. Hence, we can now write the approximating quantity
\begin{equation*}
    F(\{\widetilde{Z}\}_0^n\mathds{1}\{E_n\}).
\end{equation*}

\begin{lem}
    $\mathbf{P}$-a.s. we have that
    \begin{align*}
        \mathbf{E}_{k, z} \left[ E_{\bar{\omega}}^{\lambda} \left[ F(\{\widetilde{Z}\}_0^n) \mathds{1}\{E_n\}\mathds{1}\{X \text{ sees }z,k\} \right]\right] =\mathbf{E}_{k, p(z)} \left[ E_{\bar{\omega}}^{\lambda} \left[ F(\{\widetilde{Z}\}_0^n)\mathds{1}\{E_n\} \mathds{1}\{X \text{ sees }z,k\}\right]\right].
    \end{align*}
\end{lem}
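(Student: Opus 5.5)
The plan is to show that $E_{\bar{\omega}}^{\lambda}\bigl[F(\{\widetilde{Z}\}_0^n)\mathds{1}\{E_n\}\mathds{1}\{X \text{ sees }z,k\}\bigr]$, viewed as a function of the environment, does not depend on the block $\mathcal{B}_{z,k}\coloneqq\{\omega_s(e): s\in[k-1,k),\ e\in\mathcal{E}_z\}$. This block is exactly the information that separates $\widetilde{\mathcal{F}}_{k,z}$ from $\widetilde{\mathcal{F}}_{k,p(z)}$. Once independence is established, the two conditional expectations coincide. Indeed, by the product structure of $\mathbf{P}$ (independent Poisson clocks and i.i.d.\ resamplings per edge), conditioning additionally on an independent block of which the integrand is free changes nothing. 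This is the same mechanism as in Lemma \ref{lem:See}. As there, I will first take $F$ to be an indicator of the cut-and-glued path following a prescribed skeleton with prescribed jump times, and then pass to general bounded Lipschitz $F$ by a monotone class argument.

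The key steps are as follows.

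\textbf{Step 1.} Decompose the cut-and-glued path $\{\widetilde{X}\}_0^n$ into its two pieces. The first piece, $\{X\}_0^{k-1}$, is determined by the jump clock, the uniforms driving the jump choices, and conductances at times $<k-1$. It is therefore $\widetilde{\mathcal{F}}_{k-1}$-measurable jointly with the walk's own randomness, and it never sees $\mathcal{B}_{z,k}$.

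\textbf{Step 2.} Handle the second piece, $\{X-X_{\tau_k^+}\}$, which starts at $\tau_k^+$. By definition, $\tau_k^+$ is a regeneration time occurring after every edge of $B_1(0,n)$ (in particular every $e\in\mathcal{E}_z$) has been resampled at least once after time $k$. At a regeneration time the infected set is empty, so the future increments of the walk are driven only by fresh clocks and fresh resamplings. These are independent of $\mathcal{B}_{z,k}$, in the sense that the values $\omega_s(e)$ for $s\in[k-1,k)$ have all been overwritten by the time they could be inspected. The law of the second piece is hence a functional of the environment after the full refresh.

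\textbf{Step 3.} Show that $\tau_k^+$ itself does not depend on $\mathcal{B}_{z,k}$. The times $T_{\mathrm{trs}}(n)\circ\theta_k$ and the regeneration times are defined through the Poisson clocks and the infected-set dynamics, not through the conductance values. The only way the values in $\mathcal{B}_{z,k}$ enter is through the jump probabilities during $[k-1,k)$, via the walk's location. I would therefore couple two environments that agree outside $\mathcal{B}_{z,k}$, using the same clocks and uniforms. The paths may differ on $[k-1,\tau_k^+]$, but that segment is excised by the cut-and-glue. The event $E_n=\{T_n\le 2n\}$ needs care here, so I would likewise read it on the glued path, or note that it lies outside the excised window up to an error absorbed elsewhere.

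\textbf{Step 4.} Treat the indicator $\mathds{1}\{X \text{ sees }z,k\}$. This is the main obstacle, because it is evaluated on the true path, which does pass through the window $[k-1,k)$. My first attempt would be to integrate it out: write the quantity as a sum over the position at time $k-1$ and the excised segment, and observe that summing over all continuations inside the window gives total mass one. The glued functional is independent of the window, so the only surviving dependence is through $P(\text{sees})$.

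If that dependence does not vanish, I would interpret the intended statement as the \emph{difference} of the unseen and seen parts, using $1-\mathds{1}\{\text{sees}\}$ together with Lemma \ref{lem:See}. The invariance would then follow from the invariance of the full glued quantity minus the unseen part, which is itself invariant by the argument of Lemma \ref{lem:See}. Finally, I would verify that the monotone class extension preserves $\mathbf{P}$-a.s.\ equality.
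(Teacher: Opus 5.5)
Your outline has the same skeleton as the paper's proof. The piece $\{X\}_0^{k-1}$ uses only $\widetilde{\mathcal F}_{k-1}$, the piece after $\tau_k^+$ starts at a regeneration that follows a complete refresh of the ball, and a monotone class argument finishes. However, the goal you set yourself is false and Step 3 does not work.

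The quenched expectation $E^{\lambda}_{\bar\omega}[F(\{\widetilde Z\}_0^n)\mathds{1}\{E_n\}\mathds{1}\{X \text{ sees } z,k\}]$ is \emph{not} a function of $\bar\omega$ that ignores $\mathcal B_{z,k}$. On $\{X\text{ sees }z,k\}$ the walk may jump out of some $y\in\mathcal V_z$ during $[k-1,k)$ and so read $\omega_s(\{y,z\})$; hence $X_{\tau_k^+}$ depends on the block. In your coupling (same clocks, same uniforms, same environment after time $k$) the two walks start their second pieces at different sites and read different conductances afterwards. Their relative paths $\{X-X_{\tau_k^+}\}$ therefore differ, and excising $[k-1,\tau_k^+]$ does not remove the dependence on the starting point.

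What is true is a statement about conditional laws, valid only after integrating out the environment after time $k$, which $\mathbf E_{k,z}$ does. This is what the paper's first observation (``independent of $\widetilde{\mathcal F}_{k,z}$'') is used for. The conductances read after $\tau_k^+$ have all been refreshed after time $k$ and after they were last read. By regeneration and space-time stationarity of $\mathbf P$, the conditional law of the relative path after $\tau_k^+$, given everything up to $\tau_k^+$, is then the same for every value of $X_{\tau_k^+}$ and of the block. Your remark that $\tau_k^+$ is driven by clocks only is useful here, but the argument has to be at this conditional level, not pathwise.

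Likewise, the lemma is an exact identity, so no error for $E_n$ can be ``absorbed elsewhere''. The paper simply uses that $\mathds{1}\{E_n\}$ does not involve the environment at $z$.

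For the indicator, you are missing the paper's third observation: $\mathds{1}\{X\text{ sees }z,k\}$ is decided before any block value is read, hence does not depend on $\mathcal B_{z,k}$. An edge of $\mathcal E_z$ is read only by a jump out of $z$ or out of some $y\in\mathcal V_z$. A jump out of $z$ lands in $\mathcal V_z$ whatever the conductances are. Being at $y\in\mathcal V_z$ during $[k-1,k)$, or at a time $s\ge k$ before $\{y,z\}$ has been refreshed, already realizes the event. So up to the instant the event is realized, the path has used no block value. Your ``total mass one'' computation only reduces the question to the quenched probability of seeing, and stops there.

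Your fallback, on the other hand, is a correct alternative to that observation. It proves the statement as written; it is not a reinterpretation. The seen part equals the full glued quantity minus the unseen part. On $\{X\text{ sees }z,k\}^c$ no block value is ever read, so the argument of Lemma~\ref{lem:See} applies verbatim with $\widetilde Z$ in place of $Z$. The full glued quantity carries no indicator and is handled by Steps 1--3. This route spares you any analysis of the seen event, whereas the paper's route needs the structural fact above, so make it your main line. It still rests on the full glued quantity being invariant, which requires the conditional (not pathwise) argument described above.
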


\begin{proof}
	Let $\widetilde{G}_j, j = 1, 2$ here denote an arbitrary measurable function of (distinct portions of) the trajectory. Observe that
	\begin{enumerate}
		\item  The trajectory $ \{X - X_{\tau_{k}^+}\}_{\tau_{k}^+}^{n + k - \tau_{k}^+}$ is independent of the value taken by the environment before time $k$. Hence, 
		\[E_{\bar{\omega}}^{\lambda}\left[ \widetilde{G}_1\left( \{X - X_{\tau_{k}^+}\}_{\tau_{k}^+}^{n + k - \tau_{k}^+}\right)\right]\] 
		is independent of $\widetilde{\mathcal{F}}_{k, z}$ for all $z \in B_1(0, n)$. Thus, its value is the same under $\mathbf{E}_{k, z}$ and $\mathbf{E}_{k, p(z)}$.
		\item The trajectory $\{X\}_0^{k-1}$ is generated by only knowing the environment information contained in $\widetilde{\mathcal{F}}_{k-1}$. Hence, 
		\[E_{\bar{\omega}}^{\lambda} \left[ \widetilde{G}_2(\{X\}_0^{k-1})\right]\]
		the same under both $\widetilde{\mathcal{F}}_{k, z}$ and $\widetilde{\mathcal{F}}_{k, p(z)}$ as it only depends on $\widetilde{\mathcal{F}}_{k-1}$.
		\item The indicator function $\mathds{1}\{X \text{ sees }z,k\}$ and its quenched expectation do not depend on the environment at $\mathcal{E}_z$ in the time interval $[k-1, k]$.
	\end{enumerate} 
	We also notice that we can write $F\left(\{\widetilde{Z}\}_0^{n}\right)$ as an increasing limit of linear combinations of functions of the type
	\begin{equation*}
		\widetilde{G}_2(\{X\}_0^{k-1}) \widetilde{G}_1\left(\{X - X_{\tau_{k}^+}\}_{\tau_{k}^+}^{n + k -\tau_{k}^+}\right).
	\end{equation*}
	We observe that $\mathds{1}\{E_n\}$ does not depend on the environment at $z$. We deduce the result by taking first expectations w.r.t.\ $E_0^\omega$ and then applying the enumerated observations to deduce the equality under $\mathbf{E}_{k, z}$ and $\mathbf{E}_{k, p(z)}$.
\end{proof}

\begin{lem}\label{lemma:Lipschitz}
    For all $M>0$ there exists $C>0$ such that $\mathbf{P}$-a.s.
    \begin{equation*}
    	\begin{split}
        \Bigg|\mathbf{E}_{k, z} \bigg[ E_{\bar{\omega}}^{\lambda} &\left[ F(\{Z\}_0^n)\mathds{1}\{E_n\} \mathds{1}\{X \text{ sees }z,k\} \right]\bigg]- \mathbf{E}_{k, z} \left[ E_{\bar{\omega}}^{\lambda} \left[ F(\{\widetilde{Z}\}_0^n)\mathds{1}\{E_n\} \mathds{1}\{X \text{ sees }z,k\} \right]\right]\Bigg| \\& \le \frac{C\log(n)^2}{\sqrt{n}}\mathbf{E}_{k, z}\left[ E_{\bar{\omega}}^{\lambda} \left[ \mathds{1}\{X \text{ sees }z,k\} \right]\right] + Cn^{-M}.
    \end{split}
    \end{equation*}
    The same fact holds under $\mathbf{E}_{k, p(z)}$ on both sides.
\end{lem}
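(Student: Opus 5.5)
The plan is to control the difference pathwise on a good event and to absorb its complement into the $Cn^{-M}$ error. Since $F$ is bounded by $1$ and $1$-Lipschitz in the supremum norm, we have pointwise
\[
\bigl|F(\{Z\}_0^n)-F(\{\widetilde Z\}_0^n)\bigr|\,\mathds{1}\{E_n\}\mathds{1}\{X \text{ sees }z,k\}\le \Bigl(\min\bigl\{2,\ \sup_{t\in[0,1]}|Z_t-\widetilde Z_t|\bigr\}\Bigr)\mathds{1}\{X \text{ sees }z,k\}.
\]
It therefore suffices to show that $\sup_t|Z_t-\widetilde Z_t|\le C\log(n)^2/\sqrt{n}$ on the event $G_n\coloneqq \mathrm{Fast}(n)\cap\mathrm{Jump}_n(C_2)$, and that the contribution of $G_n^c$ is at most $Cn^{-M}$. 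The bound on $G_n^c$ should hold after taking $\mathbf{E}_{k,z}[E^\lambda_{\bar\omega}[\cdot]]$, that is, also in conditional form, which is where some care is needed.

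For the good event, compare the two paths. By construction, $\{\widetilde X\}_0^n$ coincides with $\{X\}_0^n$ up to time $k-1$. After that it is the path of $X$ from $\tau_k^+$ onward, translated so that it starts at $X_{k-1}$, and time-shifted by $\tau_k^+-(k-1)$. On $\mathrm{Fast}(n)$ the full resampling of $B_1(0,n)$ after time $k$ takes at most $C_1\log n$. On $\mathrm{Jump}_n(C_2)$ the next regeneration follows within $C_2\log n$. Hence $\tau_k^+-(k-1)\le C\log n$. The spatial displacement is bounded by the number of jumps in an interval of length $C\log n$. This is at most $C'\log n$ except on an event of probability $n^{-M-2}$, by a Poisson tail bound combined with a union bound over $k\le 2n$. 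The resulting time shift of order $\log n$ changes the unscaled path by at most the number of jumps in a window of length $C\log n$, uniformly over windows in $[0,2n]$. The drift correction $\tilde v\cdot(\text{time shift})$ contributes another $O(\log n)$. Dividing by $\sqrt n$ gives $\sup_t|Z_t-\widetilde Z_t|\le C\log(n)^2/\sqrt n$; we allow a factor $\log^2$ to absorb the uniform window bound. Polygonal interpolation does not increase this discrepancy.

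On the bad event, we write the difference as at most $2\,\mathds{1}\{G_n^c\cup J_n^c\}$, where $J_n$ is the event that the jump count is controlled. The key point is that the conditioning $\mathbf{E}_{k,z}$ only reveals environment values, so it does not affect the Poisson clocks of the walk or the refreshing clocks. The event $\mathrm{Fast}(n)$ and the jump-count event depend only on these clocks. They are therefore independent of $\widetilde{\mathcal F}_{k,z}$, and their probabilities of order $n^{-M}$ carry over to the conditional expectation. The event $\mathrm{Jump}_n(C_2)$ concerns regeneration times. These are defined from the infected-set process, which depends on the walk's clocks and the refresh clocks, and also on the walk's positions only through the count of edges seen. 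This count is at most $2d$ per jump regardless of the environment, so the infected-set size is dominated by the chain of Lemma~\ref{deathbirthchain} uniformly in the environment. We can thus rerun the proof of Lemma~\ref{lemma:Jump} with bounds uniform over the quenched environment, or alternatively use a dominating environment-free chain.

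The main obstacle is this last uniformity: Lemma~\ref{lemma:Jump} is stated under the annealed law, while we need the bound after conditioning on $\widetilde{\mathcal F}_{k,z}$. The first thing I would try is the domination argument. The regeneration structure is a function of the clocks and the jumps, with the removal rates driven by an environment-free mechanism, so the $n^{-M}$ tail holds for every fixed environment history.
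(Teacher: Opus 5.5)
Your good-event estimate is fine. The drift term $\tilde v nt$ is actually identical in $Z$ and $\widetilde Z$, and your control of jump counts over windows of length $O(\log n)$ spells out a step the paper leaves implicit. The jump-count event depends only on the walk's clock $\mathcal P$, so it is indeed independent of the environment.

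\textbf{The gap is in your treatment of $G_n^c$, where $G_n=\mathrm{Fast}(n)\cap\mathrm{Jump}_n(C_2)$.} It is not true that conditioning on $\widetilde{\mathcal F}_{k,z}$ leaves the refreshing clocks untouched.
\begin{itemize}
\item The refreshing clocks are part of the environment law $\mathbf{P}$.
\item The trajectories $s\mapsto\omega_s(e)$ on $[0,k-1]$ reveal them. For non-atomic $q$, the refresh times are a.s.\ exactly the jump times of these trajectories.
\item The regeneration structure depends on exactly this information, because the copy $e_{i,1}$ leaves the infected set precisely when $e_i$ is refreshed.
\end{itemize}
So $|I_t|$ is the chain of Lemma~\ref{deathbirthchain} only after averaging over the refresh clocks, not for a fixed environment history. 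Your domination ``for every fixed environment history'' therefore fails.

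Here is a concrete failure. Take non-atomic $q$ and the $\widetilde{\mathcal F}_{k,z}$-measurable event that no edge incident to the origin is refreshed in $[0,k-1]$; it has probability $e^{-2d\mu(k-1)}>0$. On this event, the copies added at the first jump stay in $I_t$ until time $k-1$. Hence $\tau_1\ge k-1$ with conditional probability at least $1-e^{-(k-1)}$. For $k-1>C_2\log n$ this gives $\mathbf{E}_{k,z}[P^\lambda_{\bar\omega}(\mathrm{Jump}_n(C_2)^c)]\ge 1-e^{-(k-1)}$, not $O(n^{-M})$. Similarly, revealing that some edge of $B_1(0,n)$ was not refreshed during $[0,k-1]$, with $k-1>C_1\log n$, makes $\mathrm{Fast}(n)$ fail outright.

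\textbf{What the paper does instead.} Since no environment-uniform bound exists, the $\mathbf{P}$-a.s.\ in the statement has to be obtained another way. The paper uses only the averaged identity $\mathbf{E}\bigl[\mathbf{E}_{k,z}[P^\lambda_{\bar\omega}(G_n^c)]\bigr]=\mathbb{P}^{\lambda,\mu}(G_n^c)\le Cn^{-2M}$, from the tower property and Lemmas~\ref{lemma:Fast} and~\ref{lemma:Jump}. It then applies Markov's inequality, a union bound over the $O(n^{d+1})$ pairs $(k,z)$, and the first Borel--Cantelli lemma. This gives that $\mathbf{P}$-a.s., for all large $n$, the conditional probability of $G_n^c$ is at most $n^{-M}$ simultaneously for all $k,z$.

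\textbf{How a uniform argument could be repaired.} You would have to replace $G_n$ by an event involving only the dynamics after time $k$.
\begin{itemize}
\item $T_{\mathrm{trs}}(n)\circ\theta_k$ depends only on refresh clocks on $(k,\infty)$, so it is independent of $\widetilde{\mathcal F}_{k,z}$.
\item After time $k$, under $\mathbf{E}_{k,z}[P^\lambda_{\bar\omega}(\cdot)]$, the infected-set size should evolve as the chain of Lemma~\ref{deathbirthchain}, started from $|I_k|\le 2d\,|\mathcal P\cap[0,k]|$.
\end{itemize}
You would then still need to bound the emptying time of that chain from a starting state of order $n$, which your sketch does not do.
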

\begin{proof}
	Firstly we observe that by boundedness of $F$
	\begin{equation*}
	\mathbf{E}_{k, z} \bigg[ E_{\bar{\omega}}^{\lambda} \left[ F(\{Z\}_0^n)\mathds{1}\{E_n\} \mathds{1}\{X \text{ sees }z,k\} \mathds{1}\{\mathrm{Fast}(n) \cap \mathrm{Jump}_n(C_2)\}^c \right]\bigg] \le \mathbf{E}_{k, z} \bigg[ P_{\bar{\omega}}^{\lambda}(\{\mathrm{Fast}(n) \cap \mathrm{Jump}_n(C_2)\}^c)\bigg].
\end{equation*}
	Then by Lemma\ref{lemma:Fast} and Lemma\ref{lemma:Jump} we obtain that 
	\begin{equation*}
		\mathbf{E}\left[\mathbf{E}_{k, z} \left[ P_{\bar{\omega}}^{\lambda}(\{\mathrm{Fast}(n) \cap \mathrm{Jump}_n(C_2)\}^c)\right]\right] \le Cn^{-2M},
	\end{equation*}
	Markov's inequality and the first Borel-Cantelli lemma and the triangle inequality yield the $Cn^{-M}$ term in the statement.
	
    We observe that on the event $\mathrm{Fast}(n) \cap \mathrm{Jump}_n(C_2)$
    \begin{equation*}
        \left\| \{Z\}_0^n - \{\widetilde{Z}\}_0^n \right\|_\infty \le \frac{C_1C_2\log(n)^2}{\sqrt{n}}.
    \end{equation*}
    Indeed, the difference between the two paths (seen as continuous functions by the interpolation) is at most of size $C_1C_2\log(n)^2$ and the scaling by $\sqrt{n}$ yields the claim.

    The result follows by applying the Lipschitz property of $F$.
\end{proof}

\begin{lem}\label{lem:VarRed}
    For all $M>1$, there exists $C>0$, such that
    \begin{equation*}
        \mathbf{Var}\left( E_{\bar{\omega}}^{\lambda}\left[ F(\{Z\}_0^n)\mathds{1}\{E_n\}\right] \right) \le \sum_{k = 1}^{2n} \sum_{z \in B_1(0, n)} \frac{C\log(n)^4}{n}\mathbf{E}\left[ E_{\bar{\omega}}^{\lambda}[\mathds{1}\{X \text{ sees }z,k\}]^2 \right] + Cn^{-M}.
    \end{equation*}
\end{lem}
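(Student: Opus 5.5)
Starting from Lemma~\ref{lemma:Doob}, the plan is to bound each martingale increment $\mathbf{E}[(\Delta_{z,k})^2]$ separately and then sum over $k\le 2n$ and $z\in B_1(0,n)$. The increments with $z\notin B_1(0,n)$ vanish, since the functional is $\widetilde{\mathcal{F}}_{2n}$-measurable. The first step uses Lemma~\ref{lem:See}: only the part of the functional carrying the indicator $\mathds{1}\{X\text{ sees }z,k\}$ contributes. So, writing $G\coloneqq F(\{Z\}_0^n)\mathds{1}\{E_n\}\mathds{1}\{X\text{ sees }z,k\}$ and $\widetilde G$ for the same expression with $\widetilde Z$ in place of $Z$, we have
\begin{equation*}
\Delta_{z,k}=\mathbf{E}_{k,z}[E^\lambda_{\bar\omega}G]-\mathbf{E}_{k,p(z)}[E^\lambda_{\bar\omega}G].
\end{equation*}
Next we add and subtract $\mathbf{E}_{k,z}[E^\lambda_{\bar\omega}\widetilde G]$ and $\mathbf{E}_{k,p(z)}[E^\lambda_{\bar\omega}\widetilde G]$. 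By the lemma preceding Lemma~\ref{lemma:Lipschitz}, these two cut-and-glued terms are equal, so they cancel.

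The second step applies Lemma~\ref{lemma:Lipschitz} twice, once under $\mathbf{E}_{k,z}$ and once under $\mathbf{E}_{k,p(z)}$. Using $(a+b)^2\le 2a^2+2b^2$, this gives
\begin{equation*}
(\Delta_{z,k})^2\le \frac{C\log(n)^4}{n}\Big(\mathbf{E}_{k,z}\big[E^\lambda_{\bar\omega}[\mathds{1}\{X\text{ sees }z,k\}]\big]^2+\mathbf{E}_{k,p(z)}\big[E^\lambda_{\bar\omega}[\mathds{1}\{X\text{ sees }z,k\}]\big]^2\Big)+Cn^{-2M}.
\end{equation*}
Taking $\mathbf{E}$ and applying conditional Jensen, $(\mathbf{E}_{k,z}[Y])^2\le \mathbf{E}_{k,z}[Y^2]$, together with the tower property, each term becomes at most $\mathbf{E}[E^\lambda_{\bar\omega}[\mathds{1}\{X\text{ sees }z,k\}]^2]$.

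The last step is the summation. There are $2n\cdot|B_1(0,n)|\le Cn^{d+1}$ pairs $(k,z)$, so the error terms add up to $Cn^{d+1-2M}$. Since $M$ in Lemma~\ref{lemma:Lipschitz} is arbitrary, we apply that lemma with $M' = M+d+1$ in place of $M$, which makes this total at most $Cn^{-M}$. Together with the identity of Lemma~\ref{lemma:Doob}, this yields the claimed bound.

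The main obstacle I expect is that the $Cn^{-M}$ error in Lemma~\ref{lemma:Lipschitz} holds only $\mathbf{P}$-a.s. for large $n$, via Borel--Cantelli, rather than as a deterministic bound. Inside $\mathbf{E}[(\Delta_{z,k})^2]$ we need it in expectation. To handle this, I would go back to the proof of Lemma~\ref{lemma:Lipschitz} and use the intermediate bound before Markov/Borel--Cantelli: the bad contribution is at most $\mathbf{E}_{k,z}[P^\lambda_{\bar\omega}((\mathrm{Fast}(n)\cap\mathrm{Jump}_n(C_2))^c)]$. Since this quantity is at most $1$, its square is at most itself, and its expectation is at most $Cn^{-2M}$ by Lemmas~\ref{lemma:Fast} and~\ref{lemma:Jump}. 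With this, the error term can be controlled in mean square directly, and the summation step goes through unchanged.
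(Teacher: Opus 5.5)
Your proposal is correct and follows the same route as the paper: the Doob decomposition of Lemma~\ref{lemma:Doob}, reduction to the ``sees'' terms via Lemma~\ref{lem:See}, cancellation of the cut-and-glued terms, Lemma~\ref{lemma:Lipschitz} applied under both $\mathbf{E}_{k,z}$ and $\mathbf{E}_{k,p(z)}$, and then conditional Jensen with the tower property and $(a+b)^2\le 2a^2+2b^2$. Your explicit handling of the error terms fills in a step the paper's proof glosses over: you sum over the $O(n^{d+1})$ pairs with an enlarged $M$, and you control $\mathbf{E}_{k,z}[P^\lambda_{\bar\omega}((\mathrm{Fast}(n)\cap\mathrm{Jump}_n(C_2))^c)]$ in mean square using that it is bounded by $1$, instead of relying on the only almost-sure, eventual bound stated in Lemma~\ref{lemma:Lipschitz}.
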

\begin{proof}
    Plugging in the result of Lemma~\ref{lemma:Lipschitz} into the decomposition of Lemma~\ref{lemma:Doob} and using Lemma~\ref{lem:See} we see that we need to bound
    \begin{equation*}
        \mathbf{E}\left[ \left(\frac{2C\log(n)^2}{\sqrt{n}}\mathbf{E}_{k, z}\left[ E_{\bar{\omega}}^{\lambda} \left[ \mathds{1}\{X \text{ sees }z,k\} \right]\right] +  \frac{2C\log(n)^2}{\sqrt{n}}\mathbf{E}_{k, p(z)}\left[ E_{\bar{\omega}}^{\lambda} \left[ \mathds{1}\{X \text{ sees }z,k\} \right]\right]\right)^2\right]
    \end{equation*}
    By Jensen's inequality, for both $\mathbf{E}_{k, z}$ and $\mathbf{E}_{k, p(z)}$ (we only write for the former), we observe that 
    \begin{align*}
        \mathbf{E}\left[ \left(\frac{2C\log(n)^2}{\sqrt{n}}\mathbf{E}_{k, z}\left[ E_{\bar{\omega}}^{\lambda} \left[ \mathds{1}\{X \text{ sees }z,k\} \right]\right]\right)^2\right] &\le \frac{4C^2 \log(n)^4}{n}\mathbf{E}\left[ \mathbf{E}_{k, z}\left[ E_{\bar{\omega}}^{\lambda} \left[ \mathds{1}\{X \text{ sees }z,k\} \right]^2\right]\right]\\
        & = \frac{4C^2 \log(n)^4}{n}\mathbf{E}\left[ E_{\bar{\omega}}^{\lambda} \left[ \mathds{1}\{X \text{ sees }z,k\} \right]^2\right],
    \end{align*}
    where we applied the tower property in the final equality. Applying the standard fact $(a+b)^2 \le 2a^2 + 2 b^2$ (and possibly adjusting the constant $C$) yields the result.
\end{proof}

Let $X^{(1)}, X^{(2)}$ be two copies of the random walk $X$. We consider them, firstly, under the joint quenched measure defined in the following way. For $x, y \in \mathbb{Z}^d$ and two measurable sets $A, B$
\begin{equation*}
	P^{\lambda, \bar{\omega}}_{x, y} \left( X^{(1)}_\cdot \in A, X^{(2)}_\cdot \in B \right)  \coloneqq P^{\lambda}_{\bar{\omega}, x} (X^{(1)}_\cdot \in A) \times P^{\lambda}_{\bar{\omega}, y} (X^{(2)}_\cdot \in B).
\end{equation*}
We define $E_{x, y}^{\lambda, \bar{\omega}}$ to be the corresponding expectation. Then, taking expectation w.r.t.\ the environment law (including the dynamics) $\mathbf{P}$, we define the annealed law of two walks evolving independently in the same environment as $\mathbb{P}_{x, y}^{\lambda, \mu}(\cdot) \coloneqq \mathbf{E}[P^{\lambda, \bar{\omega}}_{x, y}(\cdot)]$

We introduce the \emph{joint regeneration times}, let $\sigma_0 = 0$ and for $i \ge 0$
\begin{equation}\label{eqn:RegTimesCommon}
    \sigma_{i + 1} \coloneqq \inf\left\{ t > \sigma_i \colon I^{(1)}_{t} \cup I^{(2)}_{t} = \varnothing, I^{(1)}_{s} \cup I^{(2)}_{s} \neq \varnothing \text{ for some } s \in (\sigma_i, t)\right\}.
\end{equation}
The fact that the $\sigma_i - \sigma_{i-1}, i \ge 1$ are well defined and with exponential tails follows from the same strategy we used in Lemma~\ref{lemma:ExpTail}. Moreover, we can define the equivalent for $\sigma_i$ of the event $\mathrm{Jump}_n(C_2)$ and prove the equivalent of Lemma~\ref{lemma:Fast}. We denote this event by $\mathrm{Jump}^*_n(C_2)$

Let $A, B \subset \mathbb{Z}^d$, we set the notation
\begin{equation*}
    \mathrm{dist}\left(A , B \right) \coloneqq \inf_{x \in A, y \in B} \|x-y\|.
\end{equation*}

\begin{lem}\label{lem:ToCrossings}
 We obtain
    \begin{align*}
        \sum_{k = 1}^{2n} \sum_{z \in B_1(0, n)} \mathbf{E}\left[ E_{\bar{\omega}}^{\lambda}[\mathds{1}\{X \text{ sees }z,k\}]^2 \right] &\le C\log(n)^{c} \sum_{j = 1}^{2n} \mathbf{E}\left[ E_{0, 0}^{\lambda, \bar{\omega}}[\mathds{1}\{\mathrm{dist}\left(\{X^{(1)}\}_{\sigma_j}^{\sigma_{j+1}} , \{X^{(2)}\}_{\sigma_j}^{\sigma_{j+1}} \right) \le 1\}]\right] \\ 
        &+ Cn^{-M}.
    \end{align*} 
\end{lem}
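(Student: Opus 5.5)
The plan is to write the square of the quenched probability as a probability for two independent walks in the same environment, and then localize the ``seeing'' events to joint regeneration intervals. First, for fixed $k$ and $z$,
\[
E_{\bar{\omega}}^{\lambda}[\mathds{1}\{X \text{ sees }z,k\}]^2 = P^{\lambda,\bar{\omega}}_{0,0}\left(X^{(1)} \text{ sees } z,k,\ X^{(2)} \text{ sees } z,k\right).
\]
Taking $\mathbf{E}$ turns this into the annealed two-walk law $\mathbb{P}^{\lambda,\mu}_{0,0}$. The double sum over $(k,z)$ is then the expected number of pairs $(k,z)$ that are seen by both walks.

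Next I localize ``$X$ sees $z,k$'' in time. The walk either visits $\mathcal{V}_z$ during $[k-1,k)$, or it revisits $\mathcal{V}_z$ later, before some edge of $\mathcal{E}_z$ refreshes. I will work on the good event $\mathrm{Jump}^*_n(C_2)\cap \mathrm{Fast}(n)$, together with the analogous event that no edge clock near the walk waits longer than $C\log n$. Its complement has probability $O(n^{-M})$ by Lemma~\ref{lemma:Fast} and Lemma~\ref{lemma:Jump}, in their joint-regeneration versions, and since the double sum has only polynomially many terms this contributes the $Cn^{-M}$. On the good event, a seeing event at $(k,z)$ forces both walks to be in $\mathcal{V}_z$ at times within $C\log n$ of $k$. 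So both walks pass within distance $1$ of $z$ inside a window of $O(\log n)$ joint regeneration intervals $[\sigma_j,\sigma_{j+1}]$ around $k$.

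The key structural fact is that at a joint regeneration time both infected sets are empty. This means neither walk has any remaining dependence on previously seen edges, so one can pass from ``both walks near $z$ around time $k$'' to ``the two paths come within distance $\le 1$ of each other during a common interval $[\sigma_j,\sigma_{j+1}]$.'' Here I would split according to whether the two visits fall in the same joint regeneration interval or in intervals at most $C\log n$ apart. In the latter case I would use that interval lengths are at most $C\log n$ and displacement per interval is at most $C\log n$. Then, summing over $z$, each index $j$ is charged by at most $C\log(n)^{c}$ pairs $(k,z)$: there are $O(\log n)$ values of $k$, and $O(\log(n)^d)$ values of $z$ within distance $C\log n$ of the trajectory piece. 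Since $\sigma_j\ge k/C$ roughly, only $j\le 2n$ or so matter, after adjusting constants using the lower bound on $\sigma_{n^2}$.

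The main obstacle is this reduction from ``both walks near $z$ at nearby times'' to ``the paths come within distance $1$ in the same joint interval.'' The two visits may happen at different times, in different joint intervals. Bounding that by a same-interval crossing requires using the memory of the environment between them, which persists until the edges of $\mathcal{E}_z$ refresh. I would handle it by enlarging the notion of interval to the concatenation of $O(\log n)$ consecutive joint intervals, and then applying the Markov property at $\sigma_j$ together with a union bound to replace it by the single-interval quantity at the cost of another $\log(n)^c$ factor.
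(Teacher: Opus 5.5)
The gap is at the step you yourself flag as the main obstacle. The idea you are missing is the one the paper's proof rests on: a given \emph{version} of an edge (its value between two consecutive refreshes) can be read by the two walks only inside a single joint regeneration interval. Indeed, $I^{(1)}_{\sigma_j}\cup I^{(2)}_{\sigma_j}=\varnothing$ forces every edge read by either walk before $\sigma_j$ to have been re-sampled after its last reading and before $\sigma_j$. So suppose walk $1$ reads a version of an edge at time $s_1<\sigma_j$ and walk $2$ reads the same version at $s_2\ge\sigma_j$. Then that edge refreshed in $(s_1,\sigma_j]$, contradicting that the version is still alive at $s_2$. Hence both readings lie in one interval $[\sigma_j,\sigma_{j+1}]$, and both walks sit at endpoints of that edge during it; this is the event $\mathrm{dist}\le 1$. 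The cross-interval case therefore never occurs. What remains is the count of at most $C\log(n)^2$ pairs $(k,z)$ per interval on $\mathrm{Fast}(n)\cap\mathrm{Jump}^*_n(C_2)$, plus the $n^{-M}$ error, and you have both. You do note that the infected sets are empty at $\sigma_j$, but you extract only ``no remaining dependence'' from it, not this exclusion.

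Your proposed substitute does not close the gap as sketched. Suppose walk $1$ is in $\mathcal{V}_z$ during interval $i_1$ and walk $2$ during $i_2\neq i_1$, with $|i_1-i_2|\le C\log n$. Then in any common interval the two path pieces are only guaranteed to be within distance $O(\log n)$. So the cross terms of your union bound are not contained in any same-interval event with $\mathrm{dist}\le 1$. The Markov property at a single $\sigma_j$ does not convert them at polylogarithmic cost. From distance $r$ of order $\log n$, the walks come within distance $1$ in the next interval only with probability $e^{-cr}=n^{-c'}$, since the number of jumps per interval has exponential tails. Getting a $\log(n)^c$ loss would need a genuine hitting estimate for the difference walk over many intervals, together with a charging argument, and neither is in the proposal. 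What your localization honestly yields is the inequality with $\mathrm{dist}\le C\log n$ on the right, which is a different, weaker statement.

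Your worry may really be that, with the unit time-slab increments $\widetilde{\mathcal{F}}_{k,z}$, the two walks could read different edges of $\mathcal{E}_z$, or different versions of them, during $[k-1,k)$; the paper's short proof passes over this point. If so, the remedy is to run the Doob decomposition over versions of single edges. The relevant event then becomes that both walks read the same version of the same edge, and the argument of the first paragraph applies directly.
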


\begin{proof}
     We observe that
    \begin{equation*}
        \mathbf{E}\left[ E_{\bar{\omega}}^{\lambda} \left[ \mathds{1}\{X \text{ sees }z,k\} \right]^2\right] = \mathbf{E}\left[ E_{0, 0}^{\lambda, \bar{\omega}} \left[ \mathds{1}\{X^{(1)} \text{ sees }z,k\} \mathds{1}\{X^{(2)} \text{ sees }z,k\} \right]\right].
    \end{equation*}
    Observe that, by definition, a version of an edge can be ``seen'' only in one (joint) regeneration period. Furthermore, a version of an edge can be seen by both walks during a regeneration period only if the event
    \begin{equation*}
        \left\{\mathrm{dist}\left(\{X^{(1)}\}_{\sigma_j}^{\sigma_{j+1}} , \{X^{(2)}\}_{\sigma_j}^{\sigma_{j+1}} \right) \le 1\right\}
    \end{equation*}
    happens. Further, we observe that on the event $\mathrm{Fast}(n)\cap\mathrm{Jump}^*_n(C_2)$ there are at most $C\log(n)^2$ meetings in each regeneration period. Hence, 
    \begin{align*}
        \sum_{k = 1}^{2n} \sum_{z \in B_1(0, n)}&\mathbf{E}\left[ E_{\bar{\omega}}^{\lambda}[\mathds{1}\{X \text{ sees }z,k\}]^2 \right]\\
        &\le \sum_{j = 1}^{2n} \sum_{z \in B_1(0, n)}\mathbf{E}\left[ E_{0, 0}^{\lambda, \bar{\omega}}[\mathds{1}\{\{X^{(1)}\}_{\sigma_j}^{\sigma_{j+1}} \text{ sees }z\} \mathds{1}\{\{X^{(2)}\}_{\sigma_j}^{\sigma_{j+1}} \text{ sees }z\}] \right]\\
        &\le C\log(n)^2  \sum_{j = 1}^{2n} \mathbf{E}\left[ E_{0, 0}^{\lambda, \bar{\omega}}[\mathds{1}\{\mathrm{dist}\left(\{X^{(1)}\}_{\sigma_j}^{\sigma_{j+1}} , \{X^{(2)}\}_{\sigma_j}^{\sigma_{j+1}} \right) \le 1\}\}]\right].
    \end{align*}
    Finally, since 
    \begin{equation*}
    	\mathbb{P}_0^{\lambda, \mu}\left(\{\mathrm{Fast}(n)\cap\mathrm{Jump}^*_n(C_2)\}^c\right) \le Cn^{-M},
    \end{equation*}
    we obtain
    \begin{equation*}
    	\sum_{k = 1}^{2n} \sum_{z \in B_1(0, n)}\mathbf{E}\left[ E_{\bar{\omega}}^{\lambda}[\mathds{1}\{\mathrm{Fast}(n)\cap\mathrm{Jump}_n^*(C_2)\}^c]^2 \right] \le Cn^{-M+d+2}.
    \end{equation*}
    Since we can choose $M$ arbitrarily large we deduce the result.
\end{proof}

Let $\mathbb{P}_x^{\lambda, \mu}\times\mathbb{P}_y^{\lambda, \mu}$ denote the (annealed) law induced by two random walks evolving in independent environments. We denote expectations and variances with respect to this measure in the obvious way.

Let us introduce the random quantity
\begin{equation*}
	\mathrm{Col} \coloneqq \inf \left\{ i \ge 1 \colon N(\sigma_{i} - \sigma_{i-1})+1 \ge \| X^{(1)}_{\sigma_{i-1}} -X^{(2)}_{\sigma_{i-1}} \|  \right\}.
\end{equation*}

\begin{lem}\label{lem:StayingFar}
    For any $\varepsilon>0$, there exists $\Lambda>0$, $c>0$ such that uniformly over all $x, y \in \mathbb{Z}^d$ with $\|x-y\| \ge \Lambda$ 
    \begin{equation*}
        \mathbb{P}_x^{\lambda, \mu}\times\mathbb{P}_y^{\lambda, \mu}\left( \mathrm{Col} \ge n \right) \ge \frac{c}{n^{1/2+ \varepsilon}}.
    \end{equation*}
\end{lem}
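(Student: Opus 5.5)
We reduce the statement to a one-dimensional (or $d$-dimensional) random walk estimate for the difference of the two walks sampled at the joint regeneration times. Under $\mathbb{P}_x^{\lambda,\mu}\times\mathbb{P}_y^{\lambda,\mu}$ the two walks evolve in independent environments. The joint regeneration times $\sigma_i$ can then be built from the union of the two infected sets, and the increments $\{(X^{(1)}_{\sigma_i}-X^{(1)}_{\sigma_{i-1}},\,X^{(2)}_{\sigma_i}-X^{(2)}_{\sigma_{i-1}},\,N(\sigma_i-\sigma_{i-1}))\}_{i\ge1}$ are i.i.d. Their law does not depend on $x,y$, by translation invariance and independence of the environments. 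Hence $D_i\coloneqq X^{(1)}_{\sigma_i}-X^{(2)}_{\sigma_i}$ is a random walk in $\mathbb{Z}^d$ started at $x-y$ with i.i.d. increments $\xi_i$. The increments are symmetric in law (swap the two walks) and hence centred. They have exponential moments, by the analogue of Lemma~\ref{lemma:ExpTail} for $\sigma$ together with $\|\xi_i\|\le N(\sigma_i-\sigma_{i-1})$. Writing $N_i\coloneqq N(\sigma_i-\sigma_{i-1})$, we have the inclusion
\begin{equation*}
\{\mathrm{Col}\ge n\}\supseteq\{\|D_{i}\|> N_{i+1}+1 \text{ for all } 0\le i\le n-2\}.
\end{equation*}

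It suffices to work with a single coordinate. Project onto the first coordinate: $D^1_i=D_i\cdot e_1$ is a centred one-dimensional random walk with exponential moments and positive variance, the latter by the same argument as for the full rank of $\Sigma$. By symmetry we may assume $(x-y)\cdot e_1\ge \Lambda/\sqrt d$, using a coordinate where $|x-y|$ is large. Since $\|D_i\|\ge |D^1_i|$, it is enough to show
\begin{equation*}
\mathbb{P}\big(D^1_i> N_{i+1}+1\ \forall i\le n-2\big)\ge c\,n^{-1/2-\varepsilon}.
\end{equation*}
The obstacle is that $N_{i+1}$ is correlated with the step $\xi_{i+1}$, although it is independent of $D^1_i$. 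We handle this with a "moving barrier" argument. Fix $K=K(\varepsilon)\log n$ large. By a union bound and exponential tails, $\mathbb{P}(\max_{i\le n}N_i> K)\le n^{-10}$. On the complement we only need $D^1_i>K+1$ for all $i\le n$. The probability that a centred walk with finite variance started at $\Lambda'$ stays above level $K+1$ for $n$ steps is, by the classical persistence estimate (ballot theorem / Spitzer–Rogozin, e.g. via Kozlov's or Doney's result $\mathbb{P}_z(\min_{i\le n}S_i>0)\asymp V(z)n^{-1/2}$ with renewal function $V$), at least $c\,\min(1,(\Lambda'-K)/\sqrt n)\,n^{-1/2}$. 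This only helps when $\Lambda'>K$, which fails for fixed $\Lambda$, so we first push the walk away.

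Hence the two-stage plan, which is where the $n^{\varepsilon}$ loss comes from. In the first stage, with probability at least $e^{-c'K}=n^{-c'K(\varepsilon)/\log n\cdot\log n}$, the walk starting from $\Lambda'\ge\Lambda$ makes $m=\lceil 2K/\delta\rceil$ consecutive steps each of size at least $\delta>0$ to the right, while never dropping below $N+1$. The step law has positive probability of $\xi\cdot e_1\ge\delta$ with $N\le N_0$, since there is positive probability of a single jump per regeneration in a given direction; choose $\Lambda\ge N_0+2$. The cost is $p^{m}=n^{-C/\delta\cdot\log(1/p)}$, and this is the main difficulty: it must be at most $n^{-\varepsilon}$. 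To achieve this, I would instead let the barrier grow slowly, requiring $D^1_i>C_0\log(i+2)$ rather than a fixed $K$. The persistence probability above a barrier $g(i)=C_0\log(i+2)$ for a centred walk with exponential moments is still $\ge c\, i^{-1/2}$ up to a factor $i^{-\varepsilon}$; this is a standard curved-boundary result (Novikov, or Denisov–Sakhanenko–Wachtel for boundaries $g(i)=o(\sqrt i)$). Meanwhile $\mathbb{P}(N_{i+1}> C_0\log(i+2)-1 \text{ for some } i)$ is small uniformly once $\Lambda$, hence the starting gap, is large, by summing exponential tails $\sum_i (i+2)^{-cC_0}$. Combining the two via a union bound, with the uniform-in-start persistence bound subtracting the summable bad event, gives $\mathbb{P}(\mathrm{Col}\ge n)\ge c\,n^{-1/2-\varepsilon}$ uniformly for $\|x-y\|\ge\Lambda$. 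If the curved-boundary citation is unavailable, I would fall back to a direct argument: after reaching height $\sqrt n$, which costs order $n^{-1/2}$ by the ballot estimate, the barrier $\log n$ is negligible and Donsker/KMT controls the rest.
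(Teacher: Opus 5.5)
Your architecture is the same as the paper's: the difference walk at joint regeneration times, a slowly growing barrier, logarithmic control of the $N_i$, and a persistence estimate. The steps leading up to the last one are fine. $D_i$ is a centred walk with i.i.d.\ symmetric increments and exponential moments, and your inclusion for $\{\mathrm{Col}\ge n\}$ is in fact an equality. Projecting on one coordinate is legitimate, as is reducing to the minimal start by monotonicity in the starting point. With $g(i)=C_0\log(i+2)$ and $G_n\coloneqq\{D^1_i>g(i)\ \forall i\le n-2\}$, a curved-boundary persistence estimate gives $\mathbb{P}(G_n)\ge c\,n^{-1/2}$.

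\textbf{The gap is the final union bound.} The bad event $B\coloneqq\{\exists i\colon N_{i+1}>g(i)-1\}$ does not depend on $\Lambda$ at all. It is made small only by taking $C_0$ large, and its probability never decays in $n$: already $\mathbb{P}(N_1>g(0)-1)$ is a fixed positive number. Hence
\[
\mathbb{P}(G_n\cap B^c)\ge\mathbb{P}(G_n)-\mathbb{P}(B)\ge c\,n^{-1/2}-\delta(C_0),
\]
and the right-hand side is negative for large $n$. What you actually need is $\mathbb{P}(G_n\cap B)\le\frac12\mathbb{P}(G_n)$, i.e.\ that large $N$'s are rare \emph{on} the persistence event. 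That requires decoupling, not subtraction.

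\textbf{Missing ingredient.} Use two facts: $N_{i+1}$ is independent of $\mathcal{F}_{\sigma_i}$, and $(\xi,N)$ has the same law as $(-\xi,N)$ (swap the two walks). There are two ways to exploit this.

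First option: condition on $A\coloneqq\bigcap_i\{N_{i+1}\le g(i)-1\}$, which has probability at least $1/2$ for $C_0$ large. The increments stay independent and centred, with uniformly bounded exponential moments and variances bounded below. Then $\mathbb{P}(G_n\cap A)=\mathbb{P}(A)\,\mathbb{P}(G_n\mid A)$. You still need persistence above $g$ for independent but non-identically distributed centred increments, which is the setting of Denisov--Sakhanenko--Wachtel that you already invoke.

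Second option: bound each $\mathbb{P}(G_n\cap\{N_{i+1}>g(i)-1\})$ via the Markov property at $\sigma_{i+1}$. Combine it with $|\xi_{i+1}|\le N_{i+1}$, the upper bound $\mathbb{P}_z(\text{stay positive for } m \text{ steps})\le C(1+z)m^{-1/2}$, and $\mathbb{E}_z[S_i;\,\text{positive up to } i]\le z+C$. Summing over $i$ gives at most $C(1+z_0)\,\delta(C_0)\,n^{-1/2}$. This is a small fraction of $\mathbb{P}(G_n)$ once $C_0$ is fixed large and then $\Lambda$ (hence $z_0$) is chosen large.

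For comparison, the paper keeps $D$ in $\mathbb{Z}^d$ and uses the polynomial barrier $\|D_j\|\ge j^{\eta}$ together with $N_j\le U\vee C\log j$. It delegates exactly this joint estimate to Lemma~4.3 of Berger--Zeitouni, so this decoupling is the step your write-up still has to supply itself.
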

\begin{proof}
    For any $M>0$, we observe that there exists $U > 0$ and $C>0$ such that for all $n \ge 1$
    \begin{equation*}
        \mathbb{P}_x^{\lambda, \mu}\times\mathbb{P}_y^{\lambda, \mu}\left( N(\sigma_{n} - \sigma_{n-1}) + 1 \ge U \vee C\log(n) \right) \le \frac{1}{n^{M}},
    \end{equation*}
    where $U$ is only required for $n$ small.
    On the other hand, for $\eta \in (0, 1/2)$, let us introduce the event
    \begin{equation*}
        \left\{ \| X^{(1)}_{\sigma_n} - X^{(2)}_{\sigma_n} - (x - y)  \| \ge n^{\eta} \right\}.
    \end{equation*}
    We observe that on the intersection  
    \begin{equation*}
        \left\{ \| X^{(1)}_{\sigma_n} - X^{(2)}_{\sigma_n} - (x - y)  \| \ge n^{\eta} \right\} \cap \left\{ N(\sigma_{n} - \sigma_{n-1}) \le U \vee C\log(n) \right\},
    \end{equation*}
    there must exist some $n_0 > 0$ such that the event
    \begin{equation*}
        \left\{ \forall n \ge n_0 \colon N(\sigma_{n} - \sigma_{n-1}) + 1 \le \| X^{(1)}_{\sigma_{n-1}} -X^{(2)}_{\sigma_{n-1}} - (x - y) \| \right\},
    \end{equation*}
    happens. But then, we can choose $\Lambda>0$ large enough such that
    \begin{equation*}
        \left\{ \forall n \ge 1 \colon N(\sigma_{n} - \sigma_{n-1}) + 1 \le \| X^{(1)}_{\sigma_{n-1}} -X^{(2)}_{\sigma_{n-1}} \| \right\},
    \end{equation*}
    happens whenever $\|x- y\| \ge \Lambda $ and $X_0^{(1)} = x$ and $X_0^{(2)} = y$. Then proving the statement reduces to proving that
    \begin{equation*}
        \mathbb{P}_0^{\lambda, \mu}\times\mathbb{P}_0^{\lambda, \mu}\left( \| X^{(1)}_{\sigma_j} - X^{(2)}_{\sigma_j}\| \ge j^{\eta}, N(\sigma_{j} - \sigma_{j-1}) \le U \vee C\log(j), \text{ for all } j \in [1, n] \right) \ge \frac{c}{n^{1/2+\varepsilon}}.
    \end{equation*}
    Observe that $\{X^{(1)}_{\sigma_j} - X^{(2)}_{\sigma_j}\}$ is a mean zero random walk on $\mathbb{Z}^d$. A direct application of Lemma~4.3 of \cite{Berger_Zeitouni} yields the desired result.
\end{proof}

\begin{lem}\label{lem:ProductJoint}
    For all $x, y \in \mathbb{Z}^d$ with $\|x-y\| \ge \Lambda$ 
    \begin{equation*}
        \mathbb{P}_{x, y}^{\lambda, \mu}\left( \mathrm{Col} \ge n \right) = \mathbb{P}^{\lambda, \mu}_x\times\mathbb{P}^{\lambda, \mu}_y\left( \mathrm{Col} \ge n \right).
    \end{equation*}
\end{lem}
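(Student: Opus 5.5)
We prove the identity by a coupling built from a graphical construction of the environment. We show that, under both measures, the two trajectories observed up to the joint regeneration time $\sigma_{n-1}$ have the same law on $\{\mathrm{Col}\ge n\}$, together with the jump counts $N(\sigma_i-\sigma_{i-1})$ for $i\le n-1$. The event $\{\mathrm{Col} \ge n\}=\bigcap_{i=1}^{n-1}\{N(\sigma_i-\sigma_{i-1})+1<\|X^{(1)}_{\sigma_{i-1}}-X^{(2)}_{\sigma_{i-1}}\|\}$ is measurable with respect to exactly this data, so the lemma follows. Throughout we interpret $N(\sigma_i-\sigma_{i-1})$ as the total number of jumps of both walks in the $i$-th period; otherwise we use the maximum of the two counts, and the argument is unchanged. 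Under the product measure $\mathbb{P}_x\times\mathbb{P}_y$ we define $\sigma_i$ by the same formula \eqref{eqn:RegTimesCommon}, using the two infected sets, each driven by its own environment.

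\emph{Step 1: graphical construction.} We realise the environment by independent Poisson clocks $R(e,\cdot)$ and i.i.d.\ $q$-marks attached to each ring, together with the copy mechanism for the infected sets from Section~\ref{sect:Prelim}. For the product measure we take two independent copies of this space-time field. For the joint measure we take a single field, with independent walk clocks and uniforms for each walk. The only environment information walk $X^{(j)}$ uses during a period is the clock-and-mark data of the edges in $\mathcal{E}(z)$ for the sites $z$ it visits.

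\emph{Step 2: one period on the good event.} Suppose that at time $\sigma_{i-1}$ the walks are at distance $D$. Then $I^{(1)}_{\sigma_{i-1}}\cup I^{(2)}_{\sigma_{i-1}}=\varnothing$, so every edge seen before has been refreshed and is again an independent $q$-sample. Suppose also that $N(\sigma_i-\sigma_{i-1})+1<D$. By the triangle inequality, every site visited by $X^{(1)}$ during $[\sigma_{i-1},\sigma_i]$ is then at distance at least $2$ from every site visited by $X^{(2)}$. Hence the edge sets $\bigcup\mathcal{E}(z)$ explored by the two walks in this period are disjoint. On such disjoint sets, the restriction of one space-time field and two independent copies of it have the same law, because the field is a product over edges.

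The delicate point is that the event $\{N(\sigma_i-\sigma_{i-1})+1<D\}$ is not known in advance. We therefore argue path by path. Fix a pair of finite jump sequences (paths, jump times, and the resulting period length) compatible with the inequality. The probability of this pair, given the state at $\sigma_{i-1}$, is a product over the edges it explores of factors involving only clock and mark data. Since the two edge sets are disjoint, this probability is the same under both constructions. This identification covers the walk moves, the infected-set dynamics, and the time at which both infected sets empty, hence $\sigma_i$.

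\emph{Step 3: induction over periods.} By the strong Markov property at $\sigma_{i-1}$ and the regeneration property (the future depends on the past only through $X^{(1)}_{\sigma_{i-1}}-X^{(2)}_{\sigma_{i-1}}$, up to a translation), Step~2 shows the following. For every bounded functional $G$ of the two trajectories on $[0,\sigma_i]$,
\[
\mathbb{E}_{x,y}\bigl[G\,\mathds{1}\{\mathrm{Col}>i\}\bigr]=\mathbb{E}_x\times\mathbb{E}_y\bigl[G\,\mathds{1}\{\mathrm{Col}>i\}\bigr].
\]
We prove this by induction on $i$, starting from the trivial case $i=0$. Taking $i=n-1$ and $G\equiv1$ gives the lemma. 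The hypothesis $\|x-y\|\ge\Lambda$ is not needed for the identity itself; it only makes both sides comparable to the bound in Lemma~\ref{lem:StayingFar}.

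The main obstacle is Step~2. One must check carefully that the joint regeneration times and the copy bookkeeping of the infected sets have identical laws in the two constructions on the good event, even though $\sigma_i$ is a functional of both walks. The path-by-path factorisation over disjoint edge sets is the tool for this.
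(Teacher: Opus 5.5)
Your proposal is correct and follows the paper's own route. The paper argues that on $\{\mathrm{Col}\ge n\}$ the conductance versions seen by the two walks are disjoint, within a period by the jump-count condition and across periods because everything seen is refreshed at each $\sigma_i$, and that independence then gives the identity. You make that three-line sketch rigorous with the graphical construction, the path-by-path factorisation over disjoint edge sets, and the induction over periods.

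One correction concerns your aside that the argument is unchanged if $N(\sigma_i-\sigma_{i-1})$ is the maximum of the two jump counts. That is false: with $N_1=N_2=3$ and distance $5$ at $\sigma_{i-1}$ (so $\max(N_1,N_2)+1<5$ holds), the walks can examine edges at adjacent sites. Their $\mathcal{E}$-sets then overlap, so disjointness genuinely needs a bound on $N_1+N_2$, such as your sum interpretation.
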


\begin{proof} 
    For any point $x \in \mathbb{Z}^d$ let us introduce $\mathcal{E}_x \coloneqq \{e \in E(\mathbb{Z}^d) \colon e \sim x\}$, i.e.\ the set of edge incident to $x$. For a path $\pi = \pi(T_i)_{i=0}^n$ (with corresponding jump times) we denote 
    \begin{equation*}
        \mathcal{E}_\pi \coloneqq \{c(e, t) \colon \exists i =1, \dots, n \text{ s.t. } c(e, T_i) \sim \pi(T_i)\}.
    \end{equation*}
    We observe that the event $\{\mathrm{Col} \ge n\}$ is measurable with respect to the sigma field
    \begin{equation*}
        \mathcal{G} \coloneqq \sigma \left\{\{X^{(1)}\}_{0}^{\sigma_n} \cup \{X^{(2)}\}_{0}^{\sigma_n} \cup \mathcal{E}_{\{X^{(1)}\}_{0}^{\sigma_n}} \cup \mathcal{E}_{\{X^{(2)}\}_{0}^{\sigma_n}} \right\}.
    \end{equation*}
    On the event $\{\mathrm{Col} \ge n\}$ the conductances seen by the two random walks are disjoint, indeed, this is a consequence of our definition of regeneration times. The disjointedness and independence of the conductances implies that
    \begin{equation*}
        \mathbb{P}_{x, y}^{\lambda, \mu}\left( \mathrm{Col} \ge n \right) = \mathbb{P}^{\lambda, \mu}_x\times\mathbb{P}^{\lambda, \mu}_y\left( \mathrm{Col} \ge n \right),
    \end{equation*}
    as required.
\end{proof}

\begin{lem}\label{lem:Lambda}

    For any $\Lambda>0$ there exists $\rho = \rho(\Lambda)>0$ all $x, y \in \mathbb{Z}^d$  
    \begin{equation*}
        \mathbb{P}_{x, y}^{\lambda, \mu}\left( \| X^{(1)}_{\sigma_{1}} -X^{(2)}_{\sigma_{1}} \| \ge \Lambda \right) \ge \rho >0 .
    \end{equation*}
\end{lem}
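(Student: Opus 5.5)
The plan is to exhibit one explicit scenario, whose probability is bounded below uniformly in $(x,y)$, in which the two walks end up at distance at least $\Lambda$ at the first joint regeneration time $\sigma_1$. The source of uniformity is that we only ever use the jump clocks of the two walks, the refreshing clocks of the infected edges, and lower bounds on single-step transition probabilities. We take the latter over a set of conductance values of positive $q$-mass, to avoid any ellipticity assumption.

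First fix a compact interval $[a,b]\subset(0,\infty)$ with $q([a,b])\ge 1/2$. Choose a direction, say $e_1$, and set $m=\Lambda$. Consider first the case $(x-y)\cdot e_1\ge 0$. The scenario is the following.
\begin{itemize}
\item[(i)] Walk $X^{(1)}$ performs exactly $m$ jumps, all in direction $+e_1$, and walk $X^{(2)}$ performs exactly $m$ jumps, all in direction $-e_1$. All these jumps occur in a time window $[0,1]$.
\item[(ii)] Each conductance consulted at each jump lies in $[a,b]$.
\item[(iii)] Afterwards neither walk jumps again until all infected edge copies have been removed.
\end{itemize}
On this event $\sigma_1$ is the time at which the infected set $I^{(1)}\cup I^{(2)}$ empties. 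At that time $X^{(1)}_{\sigma_1}-X^{(2)}_{\sigma_1}=x-y+2me_1$, so the distance is at least $2m\ge\Lambda$. If $(x-y)\cdot e_1<0$, swap the roles of the two walks.

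Next, lower-bound each ingredient.
\begin{itemize}
\item[(a)] The two independent rate-one Poisson clocks each have exactly $m$ rings in $[0,1]$ with probability $(e^{-1}/m!)^2$.
\item[(b)] At each of the $2m$ jumps, at most $2d$ edges adjacent to the current site are involved. The walks may share edges when $x,y$ are close, but this only helps or is harmless. If an edge has not been seen before, its value is a fresh $q$-sample independent of the past, so it lies in $[a,b]$ with probability at least $1/2$. If the edge has been seen already, it was in $[a,b]$ at the previous look. To avoid tracking refreshes in between, we instead require that no refresh of any involved edge occurs during $[0,1]$. This has probability at least $e^{-\mu\cdot 4dm}$, since at most $4dm$ distinct edges are involved.
\item[(c)] Given (b), the jump in the prescribed direction has probability at least $p_*:=a e^{-\lambda}/(2d\,b\,e^{\lambda})>0$, since the tilt factors lie in $[e^{-\lambda},e^{\lambda}]$.
\item[(d)] After time $1$ the infected set contains at most $2\cdot 2d(m+1)$ copies. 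The probability that all are removed before either walk jumps again is bounded below by a positive constant $c(m,d,\mu)$. For instance, all removal clocks ring within one unit of time while the jump clocks stay silent, which happens with probability at least $(1-e^{-\mu})^{4d(m+1)}e^{-2}$, with some care since removals happen at total rate $|I_t|\mu$.
\end{itemize}

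Multiplying these bounds via the Markov property at the jump times gives
\[
\rho(\Lambda)\ \ge\ \Bigl(\tfrac{e^{-1}}{m!}\Bigr)^{2}\,2^{-4dm}\,e^{-4d\mu m}\,p_*^{2m}\,c(m,d,\mu)>0,
\]
and this bound is independent of $x,y$ and of the starting environment distribution.

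The main obstacle is the bookkeeping with the infected-set dynamics. It must be checked that the removal of an edge copy $e_{i,1}$ refreshes the environment. That refresh happens after the walks have stopped moving in the scenario, so it does not affect the jump probabilities. It must also be checked that conditioning on the conductance values in (b) is compatible with the joint annealed law $\mathbb{P}^{\lambda,\mu}_{x,y}$. I would handle this by revealing the environment sequentially at jump times, and by noting that both walks read the same environment. The requirement ``no refresh during $[0,1]$'' forces consistency, so shared edges cause no additional difficulty.
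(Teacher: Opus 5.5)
Your proposal is correct and is the same argument as the paper's: force both walks along fixed paths, keep every consulted conductance in a compact interval of positive $q$-mass, and use that the Poisson clocks have positive densities. You carry it out with more care than the paper: the fixed-length opposite straight paths give uniformity in $x,y$, your no-refresh requirement on $[0,1]$ keeps every first copy $e_{i,1}$ infected so $\sigma_1$ cannot occur before the walks have separated, and step (d) empties the infected set before any further jump.

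The only inaccuracy is your closing remark that the bound is independent of the starting environment distribution. Step (b) uses that the initial conductances are i.i.d.\ $q$; this holds under $\mathbb{P}^{\lambda,\mu}_{x,y}$, which is all the lemma needs. For an arbitrary deterministic start you would instead have to let the involved edges refresh before the walks move.
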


\begin{proof}
    We construct an event on which the two random walks get far away from each other. In particular, let us fix a constant $\kappa > 1$, moreover, let us fix two nearest neighbor paths $\pi^{(1)}(i)_{i = 1}^{\ell}$ and $\pi^{(2)}(i)_{i = 1}^{\ell}$ chosen (in some minimal way) such that $\|\pi^{(1)}(\ell) - \pi^{(2)}(\ell)\| \ge \Lambda$. We ask the random walk to follow these paths and encounter at the jump point only conductances of size $c(e) \in [\kappa^{-1}, \kappa]$. It follows from the fact that the jumps of Poisson processes admit density that
    \begin{equation*}
        \mathbb{P}_{x, y}^{\lambda, \mu}\left( \{ X^{(1)}\}_0^{\sigma_1} = \pi^{(1)}(\ell),  \{X^{(2)}\}_{0}^{\sigma_{1}} = \pi^{(2)}(\ell) \right) \ge \rho>0.
    \end{equation*}
    This concludes the proof. We highlight that the probability $\rho$ decays exponentially in $\ell$ and hence in $\Lambda$, however, since we only aim to show the result for finite $\Lambda$ this is sufficient for our purpose.
\end{proof}

	Let us define the random variable 
\begin{equation*}
	J^* \coloneqq \inf\{j \ge 1 \colon \mathrm{dist}\left(\{X^{(1)}\}_{\sigma_j}^{\sigma_{j+1}} , \{X^{(2)}\}_{\sigma_j}^{\sigma_{j+1}} \right) \le 1\}.
\end{equation*}

\begin{cor}\label{cor:Useful}
	For any $x, y \in \mathbb{Z}^d$ and any $\varepsilon>0$ there exists a constant $c>0$ such that
	\begin{equation*}
		\mathbb{P}_{x, y}^{\lambda, \mu} \left(J^*\ge n \right) \ge \frac{c}{n^{1/2 + \varepsilon}}.
	\end{equation*}
\end{cor}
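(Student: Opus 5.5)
The plan is to chain Lemma~\ref{lem:Lambda}, Lemma~\ref{lem:ProductJoint} and Lemma~\ref{lem:StayingFar} through the strong Markov property at the first joint regeneration time $\sigma_1$. Fix $\varepsilon>0$ and let $\Lambda$ and $c$ be as in Lemma~\ref{lem:StayingFar}. The key observation is that $\{\mathrm{Col}\ge n\}$ implies that the two walks stay at distance more than $1$ during each of the first $n$ joint regeneration periods. Indeed, on $\{\mathrm{Col} > i\}$ we have $\|X^{(1)}_{\sigma_{i-1}}-X^{(2)}_{\sigma_{i-1}}\| > N(\sigma_i-\sigma_{i-1})+1$, and each walk moves at most $N(\sigma_i-\sigma_{i-1})$ steps in that period. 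Here I read $N$ as the total number of jumps of both walks; if it only counts one walk, I would adjust $\mathrm{Col}$ by a factor $2$, which does not affect Lemma~\ref{lem:StayingFar}. Hence the distance between the trajectory pieces on $[\sigma_{i-1},\sigma_i]$ stays larger than $1$. So, after shifting indices by one,
\begin{equation*}
\{J^*\ge n+1\}\supseteq \{\mathrm{Col}\circ\theta_{\sigma_1} \ge n\}.
\end{equation*}
Strictly speaking $J^*$ starts at $j=1$, so the period $[\sigma_0,\sigma_1]$ is not tested at all, which only helps.

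Next I condition on the state at time $\sigma_1$. At a joint regeneration time both infected sets are empty. Therefore the future of the pair $(X^{(1)},X^{(2)})$, together with the unseen environment, is distributed as a fresh pair started from $(X^{(1)}_{\sigma_1},X^{(2)}_{\sigma_1})$ in a stationary environment drawn from $Q$. This is the same renewal property already used for the single-walk regeneration times. I would therefore write
\begin{equation*}
\mathbb{P}^{\lambda,\mu}_{x,y}(J^*\ge n+1)\ \ge\ \mathbb{E}^{\lambda,\mu}_{x,y}\Big[\mathds{1}\{\|X^{(1)}_{\sigma_1}-X^{(2)}_{\sigma_1}\|\ge\Lambda\}\,\mathbb{P}^{\lambda,\mu}_{X^{(1)}_{\sigma_1},X^{(2)}_{\sigma_1}}(\mathrm{Col}\ge n)\Big].
\end{equation*}
On the indicator event, Lemma~\ref{lem:ProductJoint} converts the inner probability into the product law $\mathbb{P}_{x'}\times\mathbb{P}_{y'}(\mathrm{Col}\ge n)$, and Lemma~\ref{lem:StayingFar} bounds this below by $c n^{-1/2-\varepsilon}$ uniformly in the new starting points. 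Lemma~\ref{lem:Lambda} bounds the probability of the indicator event below by $\rho(\Lambda)>0$. Altogether this gives $\mathbb{P}_{x,y}(J^*\ge n+1)\ge \rho c\, n^{-1/2-\varepsilon}$, and replacing $n+1$ by $n$ only changes the constant.

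The main obstacle is justifying the regeneration (strong Markov) property at $\sigma_1$ for the joint process of two walks in the same environment. One must check that, since both infected sets are empty, every conductance copy ever examined has been refreshed, so the environment at time $\sigma_1$ is again $Q$-distributed and independent of the past. I would argue this exactly as in the construction of the infected set \eqref{eqn:InfectedSet}, applied to the union $I^{(1)}\cup I^{(2)}$, together with the memorylessness of the Poisson clocks. A secondary check is the inclusion between $\{\mathrm{Col}\ge n\}$ and the no-near-approach event, including the convention $\mathrm{dist}\le1$ and whether $N$ counts the jumps of both walks; the factor-$2$ adjustment above handles the latter.
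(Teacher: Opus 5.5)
Your proposal is correct and follows the paper's proof essentially verbatim. It uses the strong Markov property at $\sigma_1$, Lemma~\ref{lem:Lambda} for the first factor, and Lemmas~\ref{lem:ProductJoint} and~\ref{lem:StayingFar} for the infimum over starting pairs at distance at least $\Lambda$, via the inclusion of $\{\mathrm{Col}\ge n\}$ into a no-close-approach event. One harmless index slip: $\{\mathrm{Col}\ge n\}$ only controls the first $n-1$ periods, so the correct inclusion is $\{J^*\ge n\}\supseteq\{\mathrm{Col}\circ\theta_{\sigma_1}\ge n\}$, which gives the stated bound directly without re-indexing.
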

\begin{proof}
	Observe that by the strong Markov property we obtain
	\begin{equation*}
		\mathbb{P}_{x, y}^{\lambda, \mu} \left(J^* \ge n \right) \ge \mathbb{P}_{x, y}^{\lambda, \mu}\left( \| X^{(1)}_{\sigma_{1}} -X^{(2)}_{\sigma_{1}} \| \ge \Lambda \right) \inf_{v, w \in \mathbb{Z}^d \colon \|v-w\| \ge \Lambda}\mathbb{P}_{v, w}^{\lambda, \mu}\left( \mathrm{Col} \ge n \right).
	\end{equation*}
	Here, we observe that $\{\mathrm{Col} \ge n\} \subseteq \{\inf\{j \ge 2 \colon \mathrm{dist}\left(\{X^{(1)}\}_{\sigma_j}^{\sigma_{j+1}} , \{X^{(2)}\}_{\sigma_j}^{\sigma_{j+1}} \right) \le 1\} \ge n\}$. 
	
	The quantitative estimate follows by plugging in the r.h.s.\ the results of Lemma~\ref{lem:StayingFar}, Lemma~\ref{lem:ProductJoint} and Lemma~\ref{lem:Lambda}.
\end{proof}

\begin{prop}\label{prop:Crossings}
	For all $d \ge 1$, for all $\varepsilon>0$ there exists $C = C(d, \varepsilon) > 0$ such that
	\begin{equation*}
		\mathbf{E}\left[ E_{0, 0}^{\lambda, \bar{\omega}}\left[\sum_{j = 1}^{2n}\mathds{1}\{\mathrm{dist}\left(\{X^{(1)}\}_{\sigma_j}^{\sigma_{j+1}} , \{X^{(2)}\}_{\sigma_j}^{\sigma_{j+1}} \right) \le 1\}\right]\right] \le Cn^{1/2+ \varepsilon}.
	\end{equation*}
\end{prop}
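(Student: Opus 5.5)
We bound the expected number of joint regeneration periods in which the two walks come within distance $1$. The tool is a geometric-trials argument built on Corollary~\ref{cor:Useful}. Write
\[
\mathcal{N}_n \coloneqq \sum_{j=1}^{2n}\mathds{1}\{\mathrm{dist}(\{X^{(1)}\}_{\sigma_j}^{\sigma_{j+1}},\{X^{(2)}\}_{\sigma_j}^{\sigma_{j+1}})\le 1\}.
\]
The annealed two-walk law $\mathbb{P}^{\lambda,\mu}_{0,0}$ has a strong Markov (renewal) structure at the joint regeneration times $\sigma_j$. Indeed, at $\sigma_j$ both infected sets are empty, so the future of $(X^{(1)},X^{(2)},\omega)$ after $\sigma_j$ is distributed as $\mathbb{P}^{\lambda,\mu}_{v,w}$ with $(v,w)=(X^{(1)}_{\sigma_j},X^{(2)}_{\sigma_j})$. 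This holds because all seen edges have been refreshed and the environment is fresh and i.i.d.\ $Q$.

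\textbf{Step 1: decomposition into excursions.} Define successive "meeting" indices $J_0^*=0$ and $J_{i+1}^* \coloneqq \inf\{j> J_i^* : \text{the periods } j \text{ meet}\}$. Then
\[
\mathbf{E}[E^{\lambda,\bar\omega}_{0,0}[\mathcal{N}_n]]=\mathbb{E}^{\lambda,\mu}_{0,0}[\mathcal{N}_n] \le 1+\sum_{i\ge1}\mathbb{P}^{\lambda,\mu}_{0,0}(J_i^*\le 2n).
\]
At the regeneration time $\sigma_{J_i^*+1}$ following a meeting period, the process restarts from some pair $(v,w)$. By Corollary~\ref{cor:Useful} applied from $(v,w)$, the next gap $J_{i+1}^*-J_i^*$ exceeds $2n$ with probability at least $p_n\coloneqq c(2n)^{-1/2-\varepsilon}$. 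The key point is that this bound is uniform in $(v,w)$. In the proof of Corollary~\ref{cor:Useful}, the constant comes from Lemma~\ref{lem:Lambda}, where $\rho(\Lambda)$ is uniform in $x,y$, and from Lemma~\ref{lem:StayingFar} together with Lemma~\ref{lem:ProductJoint}, which are uniform over $\|v-w\|\ge\Lambda$. We will note explicitly that $c$ does not depend on $x,y$.

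\textbf{Step 2: geometric domination.} Each excursion independently, conditionally on the past, has probability at least $p_n$ of being longer than $2n$. Hence
\[
\mathbb{P}^{\lambda,\mu}_{0,0}(J_i^*\le 2n)\le (1-p_n)^{i-1},
\]
since all of the first $i-1$ gaps must be shorter than $2n$. Summing over $i$ gives
\[
\mathbb{E}^{\lambda,\mu}_{0,0}[\mathcal{N}_n]\le 1+p_n^{-1}\le C n^{1/2+\varepsilon}.
\]
Alternatively, one can phrase this with the count $K$ of meetings before the first long gap, which is stochastically dominated by a geometric variable of parameter $p_n$. The meetings counted in $\mathcal{N}_n$ then number at most $K+1$, because a gap of length $\ge 2n$ ends the window.

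\textbf{Main obstacle.} The delicate point is the restart in Step 1. The meeting period $j=J_i^*$ itself is not a clean regeneration start, so we apply Corollary~\ref{cor:Useful} from time $\sigma_{J_i^*+1}$. This costs at most one extra counted period per excursion, which is harmless. We also need the joint regeneration times to be genuine renewal times for the pair under the annealed measure. This follows from the infected-set construction (both sets empty means no edge information is retained) exactly as for $\tau_i$. Everything else is a straightforward union/geometric bound.
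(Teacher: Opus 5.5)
Your proposal is correct and follows essentially the paper's argument: the paper's indices $\psi_k$ are your excursions restarted at the stopping time $\sigma_{J_i^*+1}$, each containing at most two meeting periods. The geometric domination uses the same uniform-in-the-starting-pair form of Corollary~\ref{cor:Useful} together with the strong Markov property at the joint regeneration times. One caveat, which your closing remark already anticipates: Step~2 must be run on these skip-one excursions, not on gaps between consecutive meeting indices, because walks sitting at the same site at $\sigma_{J_i^*+1}$ meet again in the next period with probability one.

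The only difference is cosmetic. You bound the expectation directly by summing geometric tails, which gives $O(p_n^{-1})=O(n^{1/2+\varepsilon})$. The paper instead first shows that the count exceeds $2n^{1/2+2\varepsilon}$ with probability at most $e^{-cn^{\varepsilon}}$, and then splits the expectation.
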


\begin{proof}
	Let us define the increasing sequence of random indices, $\psi_0 = 0$ and then $\psi_{k+1} = J^* \circ \theta_{\sigma_{\psi_k}+1, \sigma_{\psi_k}+1} + \psi_k$.
	
	Observe that for all $k\ge 0$ in the time interval $[\sigma_{\psi_k}, \sigma_{\psi_{k+1}}]$ there are at most two $j \ge 0$ such that $\mathrm{dist}\left(\{X^{(1)}\}_{\sigma_j}^{\sigma_{j+1}} , \{X^{(2)}\}_{\sigma_j}^{\sigma_{j+1}} \right) \le 1$. We deduce that
	\begin{equation*}
		\sum_{j = 1}^{2n}\mathds{1}\{\mathrm{dist}\left(\{X^{(1)}\}_{\sigma_j}^{\sigma_{j+1}} , \{X^{(2)}\}_{\sigma_j}^{\sigma_{j+1}} \right) \le 1\} \le 2 \sup\{j \ge 0 \colon \psi_j \le 2n \}.
	\end{equation*}
	Note that at every $\psi_k, k \ge 0$ we have by Corollary~\ref{cor:Useful} that, uniformly over $X^{(1)}_{\sigma_{\psi_k}}, X^{(2)}_{\sigma_{\psi_k}}$
	\begin{equation*}
		\mathbb{P}^{\lambda, \mu}_{X^{(1)}_{\sigma_{\psi_k}}, X^{(2)}_{\sigma_{\psi_k}}} \left( \psi_{k+1} - \psi_k \ge 2n \right) \ge \frac{c}{n^{1/2 + \varepsilon}}.
	\end{equation*}
	Note that on the event $\psi_{k+1} - \psi_k \ge 2n$ we have $\sup\{j \ge 0 \colon \psi_j \le 2n \} \le k$.
	Hence, by the independence induced by the strong Markov property we obtain
	\begin{align*}
		\mathbb{P}_{0, 0}^{\lambda, \mu}\left(\sum_{j = 1}^{2n}\mathds{1}\{\mathrm{dist}\left(\{X^{(1)}\}_{\sigma_j}^{\sigma_{j+1}} , \{X^{(2)}\}_{\sigma_j}^{\sigma_{j+1}} \right) \le 1\} \ge 2 n^{1/2 + 2\varepsilon}\right) &\le \left(1 - \frac{c}{n^{1/2 + \varepsilon}}\right)^{n^{1/2 + 2\varepsilon}}\\
		& \le e^{-cn^\varepsilon}.
	\end{align*}
	Finally, we obtain
	\begin{align*}
		\mathbf{E}&\left[ E_{0, 0}^{\lambda, \bar{\omega}}\left[\sum_{j = 1}^{2n}\mathds{1}\{\mathrm{dist}\left(\{X^{(1)}\}_{\sigma_j}^{\sigma_{j+1}} , \{X^{(2)}\}_{\sigma_j}^{\sigma_{j+1}} \right) \le 1\}\right]\right] =
			\mathbb{E}_{0, 0}^{\lambda, \mu}\left[\sum_{j = 1}^{2n}\mathds{1}\{\mathrm{dist}\left(\{X^{(1)}\}_{\sigma_j}^{\sigma_{j+1}} , \{X^{(2)}\}_{\sigma_j}^{\sigma_{j+1}} \right) \le 1\}\right]\\
			&\le n^{1/2 + 2\varepsilon} + n\mathbb{P}^{\lambda, \mu}_{0, 0}\left(\sum_{j = 1}^{2n}\mathds{1}\{\mathrm{dist}\left(\{X^{(1)}\}_{\sigma_j}^{\sigma_{j+1}} , \{X^{(2)}\}_{\sigma_j}^{\sigma_{j+1}} \right) \le 1\} \ge n^{1/2 + 2\varepsilon}\right)\\
			& \le n^{1/2 + 2 \varepsilon} + n e^{-cn^\varepsilon}\\
			& \le Cn^{1/2 + 2\varepsilon}.
	\end{align*}
The proof is concluded as $\varepsilon> 0$ is arbitrary.
\end{proof}

\begin{proof}[Proof of Theorem~\ref{CLTQuenched}]
	Putting together \eqref{eqn:Other}, Lemma~\ref{lem:VarRed}, Lemma~\ref{lem:ToCrossings} and Proposition~\ref{prop:Crossings} we obtain
	\begin{equation*}
	\mathbf{Var}\left( E_{\bar{\omega}}^{\lambda}\left[ F(\{Z\}_0^n)\right] \right) \le Cn^{-c}.
	\end{equation*}
	This is nowadays a classical condition that, together with the annealed functional CLT Theorem~\ref{CLTVBRW} implies the quenched functional CLT, see \cite[Lemma~4.1]{Szn_serie}.
\end{proof}
\section{Einstein relation does not hold}\label{sec:Einstein}
In the following we write $d^{\lambda, \ell}(\omega,x) = \frac{\sum_{i=1}^d (\ell \cdot e_i) \left(e^{\lambda(\ell \cdot e_i)} \omega(x , x +e_i) - e^{-\lambda (\ell \cdot e_i)} \omega(x , x -e_i)\right) }{W^{\lambda}(\omega,x)}$ for the quenched local displacement in direction $\ell$ at $x$ in the environment $\omega$.\\
We also allow for $\lambda$ to be negative. If $\lambda <0$ we get the same process as a CBRW with bias $|\lambda|$ in direction $-\ell$. This allows us to look at the derivative at $0$ instead of the right derivative at $0$.
\begin{lem}
For any $\mu > 0$ we have 
\[\frac{d}{d\lambda} v_{\ell}(\lambda,\mu) \Big|_{\lambda = 0}  = \frac{1}{\E^{0,\mu}[\tau]} \left(\E^{0,\mu}[(X_{\tau} \cdot \ell)^2] - \E^{0,\mu}\Big[(X_{\tau} \cdot \ell) \sum_{k=1}^{N(\tau)} d^{0,\ell}(\omega_k,X_{k-1})\Big]\right).\]
\end{lem}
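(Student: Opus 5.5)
We will differentiate the regenerative formula of Lemma \ref{regenerativSpeed}, $v_\ell(\lambda,\mu)=\E^{\lambda,\mu}[X_{\tau}\cdot\ell]/\E^{\lambda,\mu}[\tau]$, at $\lambda=0$. The key structural fact is that the law of $\tau$ (and of $N(\tau)$) does not depend on $\lambda$. Indeed, the infected set $I_t$ grows only at the jump times of the rate-one clock $\mathcal P$, by the edges adjacent to the current position. Its cardinality increases by exactly $2d$ (or by the number of new copies, which is always $2d$), and it shrinks at rate $\mu|I_t|$. Hence $|I_t|$, and therefore $\tau$ and $N(\tau)$, are functions of $\mathcal P$ and of the removal clocks only. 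In particular $\E^{\lambda,\mu}[\tau]=\E^{0,\mu}[\tau]$, and it suffices to compute $\frac{d}{d\lambda}\E^{\lambda,\mu}[X_\tau\cdot\ell]\big|_{\lambda=0}$. At $\lambda=0$ the walk is symmetric in law, since reflecting the environment maps the annealed law to itself, so $\E^{0,\mu}[X_\tau\cdot\ell]=0$ and $v_\ell(0,\mu)=0$.

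To differentiate we use a Girsanov / change-of-measure argument on the jump skeleton. Conditionally on the full environment evolution, the clocks, and the infected-set bookkeeping, the $\lambda$-walk up to $N(\tau)$ differs from the $0$-walk only in its jump probabilities. With $x_{k-1}$ the position before the $k$-th jump, $\Delta_k=X_k-X_{k-1}$, and $\omega_k$ the environment at the $k$-th jump time, the Radon--Nikodym density on the path up to $\tau$ is
\[
\mathcal D_\lambda=\prod_{k=1}^{N(\tau)}\frac{e^{\lambda \Delta_k\cdot\ell}\,W^{0}(\omega_k,X_{k-1})}{W^{\lambda}(\omega_k,X_{k-1})}.
\]
This uses that $\tau$ is a stopping time for a filtration not depending on $\lambda$. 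The environment refresh rule via copies $e_{i,1}$ is also $\lambda$-independent, so the joint law of environment and clocks is unaffected. Thus $\E^{\lambda,\mu}[X_\tau\cdot\ell]=\E^{0,\mu}[(X_\tau\cdot\ell)\mathcal D_\lambda]$. Computing
\[
\partial_\lambda \log\mathcal D_\lambda\big|_{\lambda=0}=\sum_{k=1}^{N(\tau)}\bigl(\Delta_k\cdot\ell-d^{0,\ell}(\omega_k,X_{k-1})\bigr),
\]
using $\partial_\lambda\log W^\lambda|_{0}=d^{0,\ell}$, and noting $\sum_k\Delta_k\cdot\ell=X_\tau\cdot\ell$, gives exactly the bracket in the statement after dividing by $\E^{0,\mu}[\tau]$.

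The main obstacle is justifying the interchange of derivative and expectation. We plan to use dominated convergence on the difference quotient $(\mathcal D_\lambda-1)/\lambda$. Each factor of $\mathcal D_\lambda$ lies in $[e^{-2|\lambda|},e^{2|\lambda|}]$, since $W^\lambda/W^0\in[e^{-|\lambda|},e^{|\lambda|}]$ uniformly in $\omega$ with no ellipticity needed. Therefore $|\partial_\lambda\mathcal D_\lambda|\le 2N(\tau)e^{2|\lambda|N(\tau)}$, and
\[
|X_\tau\cdot\ell|\,\bigl|(\mathcal D_\lambda-1)/\lambda\bigr|\le 2N(\tau)^2e^{2|\lambda|N(\tau)}.
\]
By Lemma \ref{lemma:ExpTail}, $N(\tau)$ has exponential moments of some order $c>0$, which gives an integrable dominating function for $|\lambda|<c/4$. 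Differentiability of the quotient in $\lambda$ then follows, the denominator being constant. Finally, negative $\lambda$ is handled by the identification with bias in direction $-\ell$ noted above, so the two-sided derivative exists.
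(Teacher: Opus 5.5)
Your proposal is correct and follows essentially the paper's proof: the same likelihood ratio $\prod_{k} e^{\lambda\Delta_k\cdot\ell}W^{0}/W^{\lambda}$, the same bound $2N(\tau)e^{2|\lambda|N(\tau)}$ obtained from $W^\lambda/W^0\in[e^{-|\lambda|},e^{|\lambda|}]$, and the exponential moments of $N(\tau)$. The only difference is organisational: you apply dominated convergence once to the difference quotient, whereas the paper splits over $\{N(\tau)=n\}$ and over paths and then uses the Leibniz rule together with uniform convergence of the series, which rests on the same estimate.

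One point of phrasing needs fixing: in the infected-set coupling the refresh times depend on the walk's path. So you should condition on the primitive non-jump randomness (walk clock, removal clocks and choices, refresh values), with $\omega_k$ a function of it and of $X_0,\dots,X_{k-1}$, not on the environment evolution and the infected sets themselves. With that conditioning the density is exactly your $\mathcal{D}_\lambda$.
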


\begin{proof}
In the following, we denote by $(\tilde{X}_k)_{k \in \mathbb{N}}$ the discrete embedded chain in $(X_t)_{t \geq 0}$ and the corresponding environments will be denoted by $(\tilde{\omega}_k)_{k \in \N}$.
First we recall that $\P$-a.s. $v(\lambda,\mu) =\frac{\E^{\lambda, \mu}[X_{\tau}\cdot \ell]}{\E^{\lambda, \mu}[\tau]} $ and that $\E^{\lambda, \mu}[\tau]$ does not depend on $\lambda$.\\
We want to compute $\frac{d}{d\lambda} \E^{\lambda,\mu}[X_{\tau}\cdot \ell] $. Note that
\begin{equation}
  \E^{\lambda,\mu}[X_{\tau}\cdot \ell] = \sum_{n = 1}^{\infty}\E^{\lambda,\mu}[X_{\tau}\cdot \ell  \mathds{1}_{N(\tau) = n}].
\end{equation}
Hence, we first show that we can exchange the derivative operation with the infinite sum and with the integral (expectation). We need to show that there exists $\lambda_0>0$ such that 
\begin{enumerate}
    \item $\sum_{n = 1}^{\infty} \frac{d}{d\lambda}\E^{\lambda,\mu}[X_{\tau}\cdot \ell  \mathds{1}_{N(\tau) = n}]$ converges uniformly over $\lambda \in [-\lambda_0, \lambda_0]$. This implies that we can swap the series and the derivative.
    \item For each $n \in \mathbb{N}$  we can apply the Leibniz integral rule on $(-\lambda_0,\lambda_0)$ to swap the derivative and the expectation. 
\end{enumerate}
We start by showing the second point by choosing a (positive) function which does depend on $n$, say $g(\lambda, n)$. Then we show that such function can be chosen such that for any $\varepsilon>0$ and uniformly over $\lambda \in [-\lambda_0, \lambda_0]$ there exists $m>0$ such that
\begin{equation*}
    \sum_{n = m}^{\infty} \frac{d}{d\lambda}\E^{\lambda,\mu}[g(\lambda, n)  \mathds{1}_{N(\tau) = n}] \le \varepsilon.
\end{equation*}
Let $\mathcal{P}_n$ be the set of all the paths of length $n$ starting in $0$. Then $|\mathcal{P}_n| < \infty$, and 
\begin{eqnarray*}
    \frac{d}{d\lambda} \E^{\lambda,\mu}[X_{\tau}\cdot \ell \mathds{1}_{N(\tau) = n}] &=& \sum_{p \in \mathcal{P}_n} \frac{d}{d\lambda} \E^{\lambda, \mu}[X_{\tau}\cdot \ell \mathds{1}_{N(\tau) = n}\mathds{1}_{(\tilde{X}_0,...,\tilde{X}_n) = p} ]\\
    &=&\sum_{p \in \mathcal{P}_n} \frac{d}{d\lambda} p_n\cdot \ell \E^{0,\mu}\left[ e^{\lambda p_n \cdot e_1} \prod_{k = 1}^{n} \frac{W^0(\tilde{\omega}_k,p_{k-1})}{W^{\lambda}(\tilde{\omega}_k,p_{k-1})}\;\mathds{1}_{N(\tau) = n}\mathds{1}_{(\tilde{X}_0,...,\tilde{X}_n) = p}  \right]\\
    &=&\sum_{p \in \mathcal{P}_n} p_n\cdot \ell \frac{d}{d\lambda} \E^{0,\mu}\left[ e^{\lambda p_n \cdot \ell + \sum_{k = 1}^{n} \ln\left(\frac{W^0(\tilde{\omega}_k,p_{k-1})}{W^{\lambda}(\tilde{\omega}_k,p_{k-1})}\right) }\;\mathds{1}_{N(\tau) = n}\mathds{1}_{(X_0,...,X_n) = p}  \right].
\end{eqnarray*}
Next we compute $\frac{d}{d\lambda}e^{\lambda p_n \cdot \ell + \sum_{k = 1}^{n} \ln\left(\frac{W^0(\tilde{\omega}_k,p_{k-1})}{W^{\lambda}(\tilde{\omega}_k,p_{k-1})}\right) }$
\begin{eqnarray*}
    &&\frac{d}{d\lambda}e^{\lambda p_n \cdot \ell + \sum_{k = 1}^{n} \ln\left(\frac{W^0(\tilde{\omega}_k,p_{k-1})}{W^{\lambda}(\tilde{\omega}_k,p_{k-1})}\right) } \\
    &&= \left(p_n \cdot \ell - \sum_{k = 1}^{n} \frac{\sum_{i=1}^d (\ell \cdot e_i) \left(e^{\lambda(\ell \cdot e_i)} \tilde{\omega}_k(p_{k-1} , p_{k-1} +e_i) - e^{-\lambda (\ell \cdot e_i)} \tilde{\omega}_k(p_{k-1} , p_{k-1} -e_i)\right) }{W^{\lambda}(\tilde{\omega}_k,p_{k-1})} \right)\\
    && \quad e^{\lambda p_n \cdot \ell + \sum_{k = 1}^{n} \ln\left(\frac{W^0(\tilde{\omega}_k,p_{k-1})}{W^{\lambda}(\tilde{\omega}_k,p_{k-1})}\right) }.
\end{eqnarray*}
We observe that for any $x \in \mathbb{Z}^d$ we have that 
\begin{equation*}
    W^0(\tilde{\omega}_k,p_{k-1})e^{-|\lambda|} \le W^\lambda(\tilde{\omega}_k,p_{k-1}) \le W^0(\tilde{\omega}_k,p_{k-1})e^{|\lambda|}.
\end{equation*}
Which in turn implies
\begin{eqnarray*}
    \Big|\frac{d}{d\lambda}e^{\lambda p_n \cdot \ell + \sum_{k = 1}^{n} \ln\left(\frac{W^0(\tilde{\omega}_k,p_{k-1})}{W^{\lambda}(\tilde{\omega}_k,p_{k-1})}\right) } \Big| \leq (p_n \cdot \ell + n ) e^{|\lambda| p_n \cdot \ell +  n |\lambda| } \le 2n e^{2|\lambda| n}.
\end{eqnarray*}
The quantity on the right-hand side is bounded above for $|\lambda| \in [0, 1]$ by $2n e^{2 n}$ which is bounded (for fixed $n$) and thus integrable.

A fortiori, we have shown that 
\begin{align*}
    \sum_{n = m}^\infty \frac{d}{d\lambda} \E^{\lambda,\mu}[X_{\tau}\cdot e_1  \mathds{1}_{N(\tau) = n}] &\le \sum_{n = m}^\infty \mathbb{E}^{0, \mu}\left[ 2n^2 e^{2|\lambda| n} \mathds{1}\{N(\tau) = n\}\right] \\
    & \le \mathbb{E}^{0,\mu}\left[ 2N(\tau)^2 e^{2|\lambda| N(\tau)} \mathds{1}\{N(\tau) \ge m\}\right].
\end{align*}
By applying twice the Cauchy-Schwarz inequality (note also that $N(\tau)$ is integer valued) we deduce that
\begin{align*}
    \mathbb{E}^{0,\mu}\left[ 2N(\tau)^2 e^{2\lambda N(\tau)} \mathds{1}\{N(\tau) \ge m\}\right] \le 16 \mathbb{E}^{0,\mu}\left[ N(\tau)^8\right] \mathbb{E}^{0,\mu}\left[e^{16|\lambda| N(\tau)} \right] \mathbb{P}^{0,\mu}\left( N(\tau) \ge m\right)^{1/2}.
\end{align*}
However, we know that there exists $\rho > 0$ depending only on $\mu$ (this is the crucial point of the argument) such that 
\begin{equation*}
    \mathbb{E}^{0,\mu}\left[e^{\rho N(\tau)} \right] < \infty.
\end{equation*}
Hence, there exists a constant $C$ uniform in $|\lambda|\in [0, \rho/16]$ such that 
\begin{equation*}
    16 \mathbb{E}^{0,\mu}\left[ N(\tau)^8\right] \mathbb{E}^{0,\mu}\left[e^{16|\lambda N(\tau)|} \right] \le C.
\end{equation*}
However, this implies that one can choose $m>0$ depending only on $\varepsilon$ and not on $\lambda \in [-\rho/16, \rho/16]$ such that 
\begin{equation*}
    \sum_{n = m}^\infty \frac{d}{d\lambda} \E^{\lambda,\mu}[X_{\tau}\cdot \ell  \mathds{1}_{N(\tau) = n}] \le \varepsilon,
\end{equation*}
as desired.

The computations above justify the fact that for every $\lambda \in [-\lambda_0, \lambda_0]$ (with $\lambda_0 = \rho/16$) we have that
\begin{align*}
    &\frac{d}{d\lambda} v_{\ell}(\lambda,\mu)\\ & = \frac{1}{\E^{0,\mu}[\tau]} \sum_{n = 1}^\infty \sum_{p \in \mathcal{P}_n} p_n \cdot \ell \mathbb{E}^{0, \mu} \Bigg[ \left(p_n \cdot \ell - \sum_{k = 1}^{n} \frac{\sum_{i=1}^d (\ell \cdot e_i) \left(e^{\lambda(\ell \cdot e_i)} \tilde{\omega}_k(p_{k-1} , p_{k-1} +e_i) - e^{-\lambda (\ell \cdot e_i)} \tilde{\omega}_k(p_{k-1} , p_{k-1} -e_i)\right) }{W^{\lambda}(\tilde{\omega}_k,p_{k-1})} \right)\\
    & \quad \cdot e^{\lambda p_n \cdot \ell + \sum_{k = 1}^{n} \ln\left(\frac{W^0(\tilde{\omega}_k,p_{k-1})}{W^{\lambda}(\tilde{\omega}_k,p_{k-1})}\right) } \mathds{1}\{N(\tau) = n\} \mathds{1}\{(\tilde{X}_0,...,\tilde{X}_n) = p\}\Bigg].
\end{align*}
And finally, plugging in $\lambda=0$ we obtain
\begin{equation*}
	 \frac{d}{d\lambda} v_{\ell}(\lambda,\mu) \vert_{\lambda=0}=\frac{1}{\E^{0,\mu}[\tau]} \left(\E^{0,\mu}[(X_{\tau} \cdot \ell)^2] - \E^{0,\mu}[X_{\tau} \cdot \ell \cdot \sum_{k=1}^{N(\tau)} d^{0,\ell}(\tilde{\omega}_k,\tilde{X}_{k-1})]\right).
\end{equation*}
\end{proof}
    \begin{rmk}
        Note that the same argument can be done at any point $\lambda_0$ different from $0$ as the derivative in $\lambda$ still can be upper-bounded by an integrable quantity
        \begin{eqnarray*}
        \Big|\frac{d}{d\lambda}e^{(\lambda-\lambda_0) p_n \cdot \ell + \sum_{k = 1}^{n} \ln\left(\frac{W^{\lambda_0}(\tilde{\omega}_k,p_{k-1})}{W^{\lambda}(\tilde{\omega}_k,p_{k-1})}\right) } \Big| \leq (|1 - \lambda_0| p_n \cdot \ell + n ) e^{(\lambda -\lambda_0) p_n \cdot \ell +  n (\lambda -\lambda_0) } \le 2n e^{2(\lambda -\lambda_0) n}.
    \end{eqnarray*}
    This implies that the function $v_{\ell}(\lambda,\mu)$ is differentiable.
\end{rmk} 

\begin{thm}
    There exists $\mu >0$ such that 
    \[\frac{d}{d\lambda} v(\lambda,\mu) \Big|_{\lambda = 0}  \neq \frac{\E^{0, \mu}[(X_{\tau} \cdot \ell)^2]}{\E^{0, \mu}[\tau]} \quad \P\text{-a.s.}\]
    Using the CLT (Theorem \ref{annealedCLT}), we deduce that the classical statement of the Einstein relation does not hold.
\end{thm}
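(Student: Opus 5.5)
By the preceding Lemma, the derivative of the speed at $\lambda=0$ equals $\E^{0,\mu}[(X_\tau\cdot\ell)^2]/\E^{0,\mu}[\tau]$ minus the correction
\[
K(\mu)\coloneqq \frac{1}{\E^{0,\mu}[\tau]}\,\E^{0,\mu}\Big[(X_\tau\cdot\ell)\sum_{k=1}^{N(\tau)} d^{0,\ell}(\tilde\omega_k,\tilde X_{k-1})\Big].
\]
So it suffices to exhibit one $\mu>0$ (and some law $q$) with $K(\mu)\neq 0$. In the reversible or static case this term vanishes by a symmetry argument. We instead expect it to be strictly positive: at $\lambda=0$ the local drift $d^{0,\ell}(\omega,x)$ at a site is positively correlated with the step actually taken there, because the step is sampled from the very conductances that define the drift. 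The plan is to take $\mu$ large, where the environment refreshes between essentially every jump, and compute $K(\mu)$ to leading order. We assume $q$ is non-degenerate, since for a deterministic conductance one has $d^{0,\ell}\equiv 0$.

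\textbf{Step 1: expansion in the increments.} Write $X_\tau\cdot\ell=\sum_{j=1}^{N(\tau)}\xi_j$ with $\xi_j=(\tilde X_j-\tilde X_{j-1})\cdot\ell$, and set $d_k\coloneqq d^{0,\ell}(\tilde\omega_k,\tilde X_{k-1})$. Then
\[
\E^{0,\mu}[(X_\tau\cdot\ell)\textstyle\sum_k d_k]=\E^{0,\mu}\big[\sum_{j,k} \xi_j d_k\big].
\]
Conditionally on the environment at the jump time, $d_k$ is exactly $E[\xi_k\mid \tilde\omega_k,\tilde X_{k-1}]$. The diagonal terms therefore give $\E[\sum_k d_k^2]$, which is strictly positive as soon as $q$ is non-degenerate: with positive probability the $2d$ conductances around the walker are unequal, so that $d^{0,\ell}\neq 0$. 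For the off-diagonal terms with $j>k$, we would like $\E[\xi_j\mid\mathcal F_{j-1}]=d_j$. This holds by the jump rule, which gives $\E[\xi_jd_k]=\E[d_jd_k]$.

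\textbf{Step 2: the regime $\mu\to\infty$.} When $\mu$ is large, the regeneration structure simplifies. With probability tending to one (as in the proof of Theorem~\ref{thm:SecondMain}), all edges adjacent to the walker refresh before each jump, and the infected set empties shortly after the first jump. Hence $N(\tau)=1$ with probability $1-O(1/\mu)$. On that event the sum reduces to the single term $\xi_1 d_1$, whose expectation is $\E[d_1^2]=\mathbf E_Q[d^{0,\ell}(\omega,0)^2]>0$ with $\omega\sim Q$ fresh. The remaining contributions are bounded by $\E[N(\tau)^2\mathds 1\{N(\tau)\ge2\}]$, and we need this to go to zero as $\mu\to\infty$. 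To show it, we would revisit the birth–death chain of Lemma~\ref{deathbirthchain}, where the death rate grows with $\mu$, to get exponential moments of $N(\tau)$ uniform in large $\mu$ together with $\P(N(\tau)\ge2)\to0$. Meanwhile $\E[\tau]$ stays bounded below by $1$ in expectation, since the walk must first jump. Therefore $K(\mu)\ge c>0$ for $\mu$ large, which gives the theorem.

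\textbf{Main obstacle and conclusion.} The main obstacle is the uniform-in-$\mu$ control of $N(\tau)$ and the precise large-$\mu$ description of the first regeneration. In particular, the walk may jump again before the edges seen at the first jump have all been removed from the infected set, and one has to check that this happens with probability $O(1/\mu)$ and that its contribution is dominated. If that estimate is delicate, the fallback is to avoid asymptotics and show $\E[\sum_{j\ge k}\xi_jd_k]\ge \E[\sum_k d_k^2]-\big|\E[\sum_{j>k}d_jd_k]\big|$, bounding the cross terms through the correlation of reused edges by $C/\mu$. Finally, the annealed CLT identifies $\E^{0,\mu}[(X_\tau\cdot\ell)^2]/\E^{0,\mu}[\tau]$ as $\ell^t\tilde\Sigma\ell$, the diffusivity in direction $\ell$ at $\lambda=0$, where $\tilde v=0$ by symmetry. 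So $K(\mu)\neq0$ shows that the Einstein relation fails.
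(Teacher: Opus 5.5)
Your proposal is correct and follows essentially the paper's own route. Take $\mu$ large and split on $\{N(\tau)=1\}$, which is independent of $(\tilde X_1,\tilde\omega_1)$; this gives the main term $\P(N(\tau)=1)\,\E[d^{0,\ell}(\tilde\omega_1,0)^2]>0$, and the rest is bounded by $\E^{0,\mu}[N(\tau)^2\mathds{1}\{N(\tau)\ge 2\}]$. You are more careful than the paper on two points it leaves implicit: this error term needs control uniform in $\mu$ (the paper only invokes $\E^{0,\mu}[N(\tau)^2]<\infty$; a monotone coupling in $\mu$ of the birth--death chain counting $|I_t|$ suffices), and $q$ must be non-degenerate.
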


\begin{proof}
    It is enough to show that for some $\mu >0$, $\E^0[X_{\tau} \cdot \ell \cdot \sum_{k=1}^{N(\tau)} d^{0,\ell}(\tilde{\omega}_k,\tilde{X}_{k-1})] \neq 0$.\\
    Let $R_1= \{N(\tau) = 1\}$ then event that the random walk does exactly one step in the regeneration time.
    Let again $(\tilde X_k)_{k\geq 0}$ be the discretized random walk and $(\tilde \omega_k)_{k \geq 0}$ the discretized environment.
    \begin{eqnarray*}
        \E^{0, \mu}\Bigg[X_{\tau} \cdot \ell \cdot \sum_{k=1}^{N(\tau)} d^{0,\ell}(\tilde{\omega}_k,\tilde{X}_{k-1})\Bigg] &=&  \E^{0, \mu}\left[\tilde X_1 \cdot \ell \cdot  d^{0,\ell}(\tilde \omega_1,\tilde X_{0}) \mathds{1}_{R_1}\right] + \E^{0, \mu}\Bigg[X_{\tau} \cdot \ell \cdot \sum_{k=1}^{N(\tau)} d^{0,\ell}(\omega_k,X_{k-1}) \mathds{1}_{R_1^c}\Bigg]\\
        &\geq& \E^{0, \mu}\left[\tilde X_1 \cdot \ell \cdot  d^{0,\ell}(\tilde \omega_1,\tilde X_{0}) \mathds{1}_{R_1}\right] - \E^{0, \mu}\left[N(\tau)^2 \mathds{1}_{R_1^c}\right].
    \end{eqnarray*}
    We observe that $R_1$ is independent of $\tilde X_1 \cdot \ell \cdot  d^{0,\ell}(\tilde \omega_1,\tilde X_{0})$, so we get 
    \begin{eqnarray*}
        \E^{0, \mu}\Bigg[X_{\tau} \cdot \ell \cdot \sum_{k=1}^{N(\tau)} d^{0,\ell}(\tilde{\omega}_k,\tilde{X}_{k-1})\Bigg] &\geq &  \P^{0, \mu}(R_1) \E^{0, \mu}\left[\tilde X_1 \cdot \ell \cdot  d^{0,\ell}(\tilde \omega_1,0)\right] - \E^{0, \mu}\left[N(\tau)^2 \mathds{1}_{R_1^c}\right]. 
    \end{eqnarray*}
    Then using that $\E^{0, \mu}[N(\tau)^2 ] < \infty$ and that $\lim_{\mu \rightarrow \infty} \mathds{1}_{R_1^c} = 1$ we get that for any  $\varepsilon > 0$ there is a $\mu$ large enough such that 
    \begin{eqnarray*}
        \E^{0, \mu}\Bigg[X_{\tau} \cdot \ell \cdot \sum_{k=1}^{N(\tau)} d^{0,\ell}(\tilde{\omega}_k,\tilde{X}_{k-1})\Bigg] &\geq & \E^{0, \mu}\left[\tilde X_1 \cdot \ell \cdot  d^{0,\ell}(\tilde \omega_1,0)\right] - \varepsilon  \\
        &=&\E^{0, \mu}\left[\E[X_{1} \cdot \ell|\tilde{\omega}_1]^2\right] - \varepsilon.
    \end{eqnarray*}
    So for $\mu$ large enough we have that 
    \begin{eqnarray*}
        \E^{0, \mu}\Bigg[X_{\tau} \cdot \ell \cdot \sum_{k=1}^{N(\tau)} d^{0,\ell}(\tilde{\omega}_k,\tilde{X}_{k-1})\Bigg]>0.
    \end{eqnarray*}
\end{proof}
\begin{rmk}
    Even though the Einstein relation does not hold, we believe that $\frac{d}{d\lambda} v_{\ell}(\lambda,\mu) \Big|_{\lambda = 0} > 0$, and so that the speed is positive for small $\lambda$.
\end{rmk}

\appendix 

\section{Probability of hitting hyperplanes} \label{sect:AppendixHitting}

In this section we prove Lemma \ref{staticLemma}. The proof adapts the argument of \cite[Lemma 5]{NinaNagelGuo}. We assume that $\ell \cdot e_1 >0$, and we write $\lambda_1 = \lambda (\ell \cdot e_1)$.\\
Fix $d$ the dimension, $q$ the distribution with uniform elliptic constant $\kappa>0$, so $\Omega=[\kappa^{-1},\kappa]^{E(\zd)}$.\\
Form now on, as in \cite{NinaNagelGuo}, we use the conductances $\omega^\lambda(x,y)=e^{\lambda(x+y)\cdot \ell}\omega(x,y)$ on the edges to give the jump probabilities of the random walker. Note that this yields the same model because the jump rate of the random walker is constant $1$ and the probability to jump to some neighbor does not change. Indeed, for $x \sim y$
\begin{eqnarray*}
\P_{\omega,z}^{\lambda,0}(X_{n+1}=y\,|\,X_n=x)
&=&
\frac{e^{\lambda(x+y)\cdot \ell}}
{\sum_{z\sim x}e^{\lambda(x+z)\cdot \ell}\omega(x,z)}
\\
&=&
\frac{e^{2\lambda x\cdot \ell}}
{e^{2\lambda x\cdot \ell}}
\frac{e^{\lambda(y-x)\cdot \ell}}
{\sum_{z\sim x}e^{\lambda(z-x)\cdot e_1}\omega(x,z)}
\omega(x,y).
\end{eqnarray*}

\bigskip

\begin{rmk}
In \cite{NinaNagelGuo}, the authors prove the existence of $L$ uniformly in $\lambda$ which we do not need, this way we do not need to use the Harnack inequality.
\end{rmk}

\textbf{Proof of Lemma \ref{staticLemma}:} We argue by contradiction. Fix $L\ge1$. Let $u(x)=\P_{\omega,x}^{\lambda,0}(T_L<T_{-L})$ and assume there exists $\omega\in\Omega$ s.t. $u(0)<2/3$.

Let $T_{mL}=\{z\in\mathbb Z^d:\;z\cdot e_1=mL\}$ for $m\in\mathbb Z$. For a set $G\subset\mathbb R^d$ we define its discrete boundary by $\partial G=\{x\in\mathbb Z^d\cap G:\;x\sim y\text{ for some }y\in\mathbb Z^d\setminus G\}$.
For a function $h:G\to\mathbb R$ denote its Dirichlet energy on $G$ as
\begin{eqnarray*}
    \mathcal E(h,G)=\sum_{\substack{x,y\in G\\x\sim y}}\omega^\lambda(x,y)(h(x)-h(y))^2.
\end{eqnarray*}
We let $S_G=S_G(u,\omega)$ be the set of functions $v:G\to\mathbb R$ with $v=u$ on $\partial G$. We check that $u$ is harmonic on $\{x\in\mathbb Z^d:\;|x\cdot e_1|\le L\}$.
Let $x$ be such that $|x\cdot e_1|<L$. Then
\begin{eqnarray*}
    u(x)&=&\P_{\omega,x}^{\lambda,0}(T_L<T_{-L})\\
    &=&\sum_{y\sim x}\P_{\omega,x}^{\lambda,0}(T_L<T_{-L}\mid X_1=y)\P_{\omega,x}^{\lambda,0}(X_1=y)\\
    &\stackrel{\mathrm{Markov property}}{=}&\sum_{y\sim x}\P_{\omega,y}^{\lambda,0}(T_L<T_{-L})\P_{\omega,x}^{\lambda,0}(X_1=y)\\
    &=&\sum_yu(y)\P_{\omega,x}^{\lambda,0}(X_1=y).
\end{eqnarray*}

Let $x$ be such that $|x\cdot e_1|=L$ then $\P_{\omega,x}^{\lambda,0}(T_L<T_{-L})=1_{\{x\cdot e_1=L\}}$.\\
Then applying Dirichlet principle we have
\begin{eqnarray*}
    \mathcal E(u,G)=\min_{v\in S_G}\mathcal E(v,G)
\end{eqnarray*}
for any $G\subseteq\{x\in\mathbb Z^d:\;|x\cdot e_1|\le L\}$.

\bigskip

\textit{Lower bound:} 
We start by finding a lower bound on $\mathcal E(u,\Pi_{l})$ for $l \geq 1$ and $\Pi_{l}=[-L,L]\times[-l,l]^{d-1}$.
We do that by ignoring edges that are not in direction $e_1$. For $x= (x_1,...,x_d) \in \Z^d$ we write $\bar{x}$ for $(x_2,...,x_d)$.

The conductance of such an edge connecting $(x_1,0)$ to $(x_1+1,0)$ (with $0\in\mathbb Z^{d-1}$) is bounded from below by $\kappa^{-1} e^{2\lambda_1 x_1}$, a lower bound for $\mathcal E(u,\Pi_{l})$ is then
\begin{eqnarray*}
    \mathcal E(u,\Pi_{l})&\ge& \kappa^{-1}\sum_{i=-L}^{L-1}e^{2\lambda_1 i}\left(u(i,0)-u(i+1,0)\right)^2\\
    &\ge&\kappa^{-1}\sum_{i=0}^{L-1}e^{2\lambda_1 i}\left(u(i,0)-u(i+1,0)\right)^2\\
    &\ge&\kappa^{-1}\left(\sum_{i=0}^{L-1}u(i,0)-u(i+1,0)\right)^2\left(\sum_{i=0}^{L-1}e^{-2\lambda_1 i}\right)^{-1}\\
    &\ge&\kappa^{-1}\frac{1-e^{-2 \lambda_1}}{1- e^{-2\lambda_1 L}}\left(1-u(0)\right)^2\\
    &\ge&\kappa^{-1}(1-e^{-2 \lambda_1})(1-u(0,\bar x))^2=c_1>0,
\end{eqnarray*}
We applied in the third line Cauchy-Schwarz and $c_1$ is a strictly positive constant independent of $L$.\bigskip \\

\textit{Upper bound:} Next we want to look at $\bar u(x)=\dfrac{e^{2L\lambda_1}-e^{-2x_1\lambda_1}}{e^{2L\lambda_1}-e^{-2L\lambda_1}}$ with $x_1=x\cdot e_1$.
For $x\cdot e_1=L$ we have $\bar u(x)=1$ and for $x\cdot e_1=-L$ we have $\bar u(x)=0$.
Further $\bar u(x)\geq 0$ $\forall\,x$ s.t.\ $|x\cdot e_1|\leq L$.
Let $\tilde L=\lfloor e^{L\lambda_1 /d}\rfloor\cdot L$.
Let
$\Pi=[-L,L]\times[-2\tilde L,2\tilde L]^{d-1}$
and
$\tilde\Pi=[-L,L]\times[-\tilde L,\tilde L]^{d-1}$.
We first want to compute $\mathcal E(\bar u,\tilde{\Pi})$.
\begin{eqnarray*}
    \mathcal E(\bar u,\tilde{\Pi}) = \sum_{\substack{y\nsim x\\x,y\in\Pi}} \P^{\lambda,0}_{\omega,x}(X_1=y) (\bar u(x)-\bar u(y))^2.
\end{eqnarray*}
We see that for $(x-y)\cdot e_1=0$ we have $\bar u(x)-\bar u(y)=0$, so we can focus on $(x-y)\cdot e_1\neq 0$. We have
\begin{eqnarray*}
    \mathcal E(\bar u,\tilde{\Pi}) &=& \sum_{\bar x \in \Z^{d-1}, \, \|\bar x\|\le 2\tilde L} \sum_{i=-L}^{L-1} \P^{\lambda,0}_{\omega,(i,\bar x)} \bigl(X_1=(i+1,\bar x)\bigr) \left(\bar u(i+1,\bar x)-\bar u(i,\bar x)\right)^2\\
    &&+\sum_{\bar x \in \Z^{d-1}, \, \|\bar x\|\le 2\tilde L}\sum_{i=-L+1}^{L} \P^{\lambda,0}_{\omega,(i,\bar x)} \bigl(X_1=(i-1,\bar x)\bigr) \left(\bar u(i,\bar x)-\bar u(i-1,\bar x)\right)^2 \\
    &\le& \tilde C \sum_{\bar x \in \Z^{d-1}, \, \|\bar x\|\le 2\tilde L} \sum_{i=-L}^{L-1} \left(\bar u(i+1,\bar x)-\bar u(i,\bar x)\right)^2 e^{2\lambda_1 i}
\end{eqnarray*}
for some constant $\tilde{C}$. But
\begin{eqnarray*}
    \bar u(i+1,\bar x)- \bar u(i,\bar x) &=& \frac{e^{2L\lambda_1}- e^{-2(i+1)\lambda_1} - e^{2L\lambda_1} + e^{-2i\lambda_1}}{e^{2L\lambda_1}-e^{-2L\lambda_1}} \\
    &=& \frac{e^{-2i\lambda_1}(e^{2\lambda_1}-1)}{e^{2L\lambda_1}-e^{-2L\lambda_1}}.
\end{eqnarray*}
So we have that
\begin{eqnarray*}
    \sum_{i=-L}^{L-1}e^{2\lambda_1 i}\left(\bar u(i+1,\bar x)- \bar u(i,\bar x)\right)^2&=&\sum_{i=-L}^{L-1}e^{2\lambda_1 i}\frac{e^{-4i\lambda_1}(e^{2\lambda_1}-1)^2}{(e^{2L\lambda_1}-e^{-2L\lambda_1})^2}\\
    &=& \frac{(e^{2\lambda_1}-1)^2}{(e^{2L\lambda_1}-e^{-2L\lambda_1})^2}\sum_{i=-L}^{L-1}e^{-2i\lambda_1}\\
    &=&\frac{(e^{2\lambda_1}-1)^2}{(e^{2L\lambda_1}-e^{-2L\lambda_1})^2}e^{2\lambda_1 L}\left(\frac{1-e^{-4\lambda_1 L}}{1-e^{-2\lambda_1}}\right)\\
    &\le& C_2 \frac{ e^{2\lambda_1 L} }{ (e^{2L\lambda_1}-e^{-2L\lambda_1})^2}.
\end{eqnarray*}
This yields for some constant $C_3$
\begin{eqnarray*}
    \mathcal E(\bar u,\tilde\Pi) \le C_3 \frac{e^{2\lambda_1 L}\cdot L^{d-1} \cdot e^{L \lambda_1} }{(e^{2\lambda_1 L}-e^{-2\lambda_1 L})^2} = O(e^{-\lambda_1 L}).
\end{eqnarray*}

For $L\to\infty$ we have $\mathcal E(\bar u,\Pi)\longrightarrow 0$.
Now the problem is that it does not share the boundary value with $u$, so we need to interpolate between $\tilde\Pi$ and $\Pi$.
Note the computations also hold for $\mathcal E(\bar u,\Pi+y)$ with $y\cdot e_1=0$ and $y\in\mathbb Z^d$.

In the following $C$ is a constant whose value might change from line to line. Let $\Pi(y)=y+\Pi$ and $\tilde\Pi(y)=y+\tilde\Pi$ for $y\in\mathbb Z^d$.
For $(x_1,\tilde x)\in\tilde\Pi$ define
$d(x_1,\tilde x)=\mathds{1}_{(x_1,\tilde x)\notin\tilde\Pi}\frac{\|\tilde x\|_1-L}{L}$.
Let $E_L=\sup_{y\in\mathbb Z^{d-1}}\mathcal E(u,\tilde\Pi(0,y))$. Note that $E_L<\infty$ because.
\begin{equation*} 
    \omega^\lambda(x,y)\le \kappa e^{\lambda(x+y)\cdot \ell}\ \text{so}\ \mathcal E(u,\tilde\Pi_{(0,y)})\le \kappa 2^d L \tilde{L}^{d-1} e^{2\lambda \tilde{L}}<\infty. 
\end{equation*}
Choose $y\in\mathbb Z^d$ such that $\mathcal E(u,\tilde\Pi(0))\le \mathcal E(u,\tilde\Pi(y))$ and $\mathcal E(u,\tilde\Pi(y))+\mathcal E(\bar u,\tilde\Pi(y))>E_L$.
Which is possible because $E(\bar u,\tilde\Pi(y))$ has the following lower bound independent of $y$:
\begin{equation*} 
    \mathcal E(\bar u,\tilde\Pi(y))\ge \sum_{\substack{x,z\in\tilde\Pi(y)\\(x-z)\cdot e_1=1}}\P_{\omega,x}^{\lambda,0}(X_1=z)\left(\frac{e^{-2 \lambda_1 x\cdot e_1}-e^{- 2 \lambda_1 z\cdot e_1}}{e^{2\lambda_1 L}-e^{-2\lambda_1 L}}\right)^2\ge \P_{\omega,0}^{\lambda,0}(X_1=e_1)\left(\frac{e^{2 \lambda_1}-1}{e^{2\lambda_1 L}-e^{-2\lambda_1 L}}\right)^2>0.
\end{equation*}
From now on, we fix that $y$. Consequently, we write $\tilde\Pi$ for $\tilde\Pi(y)$ and $\Pi$ for $\Pi(y)$. Let
\begin{eqnarray*} 
    v(x)=(1-d(x))\bar u(x)+d(x)u(x)\qquad \forall x\in\Pi. 
\end{eqnarray*}
Note that $v=u$ on $\partial\Pi$.
Then by Dirichlet's principle we have
\begin{eqnarray*} 
    \mathcal E(u,\Pi)\le \mathcal E(v,\Pi). 
\end{eqnarray*}
Further, for $x\sim y$ ($x,y\in\Pi$) we have:
\begin{eqnarray*} 
    v(x)-v(y)&=&(1-d(x))(\bar u(x)-\bar u(y))+d(x)(u(x)-u(y))+(d(x)-d(y))(u(y)-\bar u(y)). 
\end{eqnarray*}
Then using that for $x\sim y$
$|d(x)-d(y)|\le \tilde L^{-1}\mathds{1}_{\{x,y\in\tilde\Pi^c\}}$
\begin{eqnarray*} 
    (v(x)-v(y))^2&\le&\left[(1-d(x))(\bar u(x)-\bar u(y))+d(x)(u(x)-u(y))\right]^2\\ 
    &&+\tilde L^{-2}\mathds{1}_{\{x,y\in\tilde\Pi^c\}}\left(\bar u(y)-u(y)\right)^2\\ 
    &&+2\tilde L^{-1}\mathds{1}_{\{x,y\in\tilde\Pi^c\}}\left|\bar u(y)-u(y)\right|\left[(1-d(x))|\bar u(x)-\bar u(y)|+d(x)|u(x)-u(y)|\right]\\ 
    &\le&(1-d(x))(\bar u(x)-\bar u(y))^2+d(x)(u(x)-u(y))^2+\tilde L^{-2}\mathds{1}_{\{x,y\in\tilde\Pi^c\}}(\bar u(y)-u(y))^2\\
    &&+2\tilde L^{-1}\mathds{1}_{\{x,y\in\tilde\Pi^c\}}|\bar u(y)-u(y)|\left[(1-d(x))|\bar u(x)-\bar u(y)|+d(x)|u(x)-u(y)|\right],
\end{eqnarray*}
where we used Jensen's inequality for the last inequality. By multiplying both sides by $\omega^\lambda(x,y)$ and summing over all $x,y\in\Pi$, we obtain
\begin{eqnarray} \label{eq1}
    \mathcal E(v,\Pi)&\le&\mathcal E(\bar u,\Pi)+\mathcal E(u,\Pi\setminus\tilde\Pi^+)\\ 
    &&+\tilde L^{-2}\sum_{\substack{z\in\Pi\setminus\tilde\Pi\\x\sim z}}\omega^\lambda(x,z)\left(\bar u(z)-u(z)\right)^2 \label{a}\\
    &&+2\tilde L^{-1}\sum_{\substack{z\in\Pi\setminus\tilde\Pi\\x\sim z}}\omega^\lambda(x,z)|\bar u(z)-u(z)|\left(|\bar u(x)-\bar u(z)|+|u(x)-u(z)|\right), \label{b}
\end{eqnarray}
with $\tilde\Pi^+=\tilde\Pi\setminus\partial\tilde\Pi$. We now want to bound the terms \eqref{a} and \eqref{b}.
\begin{eqnarray*} \eqref{a} &\le&2\tilde L^{-2}\sum_{\substack{z\in\Pi\setminus\tilde\Pi\\x\sim z}}\omega^\lambda(x,z)\left[(u(z)-1)^2+(\bar u(z)-1)^2\right]. \end{eqnarray*}
But for $z=(z_1,\bar z)$ we can apply the Cauchy-Schwarz inequality to have that:
\begin{eqnarray*} (u(z)-1)^2&=&\left(\sum_{i=z_1}^{L}u(i,\bar z)-u(i+1,\bar z)\right)^2\\ &\le&2L\sum_{i=z_1}^{L}\left(u(i,\bar z)-u(i+1,\bar z)\right)^2. 
\end{eqnarray*}
Bounding $(\bar u(z)-1)^2$ in the same way we get
\begin{eqnarray*} 
    \eqref{a}&\le&2\tilde L^{-2}\cdot L\sum_{\substack{z\in\Pi\setminus\tilde\Pi\\z\sim x}}\omega^\lambda(x,z)\left(\sum_{i=z_1}^{L}(u(i,\bar z)-u(i+1,\bar z))^2+(\bar u(i,\bar z)-\bar u(i+1,\bar z))^2\right)\\
     &\le&(2d \kappa^2 e^{\lambda})\tilde L^{-2}\cdot L\sum_{z\in\Pi\setminus\tilde\Pi}\sum_{i=z_1}^{L}\omega^\lambda((i,\bar z),(i+1,\bar z))\left[(u(i,\bar z)-u(i+1,\bar z))^2+(\bar u(i,\bar z)-\bar u(i+1,\bar z))^2\right]\\ 
     &\le&C\tilde L^{-2}L^2\left[\mathcal E(u,\Pi\setminus\tilde\Pi^+)+\mathcal E(\bar u,\Pi\setminus\tilde\Pi^+)\right]\\ 
     &\le&C\tilde L^{-1}L\left[\mathcal E(u,\Pi\setminus\tilde\Pi^+)+\mathcal E(\bar u,\Pi\setminus\tilde\Pi^+)\right]. 
\end{eqnarray*}
For a constant $C$ depending on $d, \kappa$ and $\lambda$.
For the summand \eqref{b} using Hölder's inequality we get that,
\begin{eqnarray*} 
    \eqref{b}&\le&2\tilde L^{-1}\left(\sum_{\substack{z\in\Pi\setminus\tilde\Pi\\x\sim z}}\omega^\lambda(x,z)(u(z)-\bar u(z))^2\right)^{1/2}\left(\sum_{\substack{z\in\Pi\setminus\tilde\Pi\\x\sim z}}\omega^\lambda(x,z)\left[(u(x)-u(z))^2+(\bar u(x)-\bar u(z))^2\right]\right)^{1/2}. 
\end{eqnarray*}
The first sum is the same as the one in (a) so we can bound it by $CL^2\left(\mathcal E(u,\Pi\setminus\tilde\Pi^+)+\mathcal E(\bar u,\Pi\setminus\tilde\Pi^+)\right)$ it follows
\begin{eqnarray*} 
    \eqref{b}\le C\tilde L^{-1}L\left(\mathcal E(u,\Pi\setminus\tilde\Pi^+)+\mathcal E(\bar u,\Pi\setminus\tilde\Pi^+)\right). 
\end{eqnarray*}
Putting the bounds for \eqref{a} \& \eqref{b} together and using that \[
\mathcal{E}(u,\Pi)-\mathcal{E}(u,\Pi\setminus\tilde{\Pi}^{+})=\mathcal{E}(u,\tilde{\Pi})
\qquad\text{and}\qquad
\mathcal{E}(u,\Pi)\leq \mathcal{E}(v,\Pi)
\]
we get:
\begin{eqnarray*}
\mathcal{E}(u,\Pi)\leq \mathcal{E}(v,\Pi)\leq \mathcal{E}(\bar{u},\Pi)+\mathcal{E}(u,\Pi\setminus\tilde{\Pi}^{+})+C\tilde{L}^{-1}L\bigl(\mathcal{E}(u,\Pi\setminus\tilde{\Pi}^{+})+\mathcal{E}(\bar{u},\Pi\setminus\tilde{\Pi}^{+})\bigr).
\end{eqnarray*}
So
\begin{eqnarray}\label{eq2}
\mathcal{E}(u,\tilde{\Pi})&\leq& \mathcal{E}(\bar{u},\Pi)+C\tilde{L}^{-1}L\bigl(\mathcal{E}(u,\Pi\setminus\tilde{\Pi}^{+})+\mathcal{E}(\bar{u},\Pi\setminus\tilde{\Pi}^{+})\bigr)\\
&\leq& \mathcal{E}(\bar{u},\Pi)+C\tilde{L}^{-1}L\bigl(\mathcal{E}(u,\Pi\setminus\tilde{\Pi}^{+})+\mathcal{E}(\bar{u},\Pi)\bigr).
\end{eqnarray}
Since $\Pi$ and $\tilde{\Pi}$ are of comparable size and the energy over a box with size $\tilde{\Pi}$ is bounded by $E_L$ we have
\begin{eqnarray}\label{eq3}
\mathcal{E}(u,\Pi\setminus\tilde{\Pi}^{+})\leq 3^{d-1}E_L\leq 3^{d-1}\bigl(\mathcal{E}(\bar{u},\tilde{\Pi})+\mathcal{E}(u,\tilde{\Pi})\bigr)
\end{eqnarray}
using the definition of $\tilde{L}$ and \eqref{eq2}and \eqref{eq3}
\begin{eqnarray*}
\mathcal{E}(u,\tilde{\Pi})\leq \mathcal{E}(\bar{u},\Pi)+Ce^{-(L\lambda_1 / d)}\bigl(\mathcal{E}(u,\tilde{\Pi})+\mathcal{E}(\bar{u},\tilde{\Pi})\bigr)
\end{eqnarray*}
\begin{eqnarray*}
\leq \mathcal{E}(\bar{u},\Pi)+ce^{-(L\lambda_1 / d)}\bigl(\mathcal{E}(u,\tilde{\Pi})+\mathcal{E}(\bar{u},\Pi)\bigr).
\end{eqnarray*}
However, we have shown that $\mathcal{E}(\bar{u},\Pi)\stackrel{L\to\infty}{\to}0$ so $\lim_{L\to\infty}\mathcal{E}(u,\tilde{\Pi})=0$. This is a contradiction to the lower bound we had compute for $\mathcal{E}(u,\Pi_{\ell})$ with $\Pi_{\ell}$ a non-empty rectangle, that yields $\mathcal{E}(u,\tilde{\Pi})\geq c_1>0$ for all $L$.

\section{Proof of Lemma \ref{deathbirthchain}}\label{sect:Lemma}
We want to prove Lemma \ref{deathbirthchain}, first we need a lemma about hitting time of random walks.
\begin{lem}\label{LemmaHittingTimeRW}
    Let $L\in \mathbb{N}$.
    Let $(X_n)_{n \geq 0}$ be a random walk with $X_n = X_{n-1} + \xi_n$, where $(\xi_i)_{i \geq 0}$ is a sequence of i.i.d. random variables and $\P(\xi_1 = -1) = 1-\P(\xi_1 = L) = p$ with $p$ such that $\E[\xi_1] < 0$.
    Then the random variable $H = \inf\{n \geq 0 : X_n - X_0 = -L\}$ has an exponential tail and, in particular, $\E[H] < \infty$.
\end{lem}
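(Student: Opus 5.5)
The plan is a Chernoff bound combined with a decomposition of $H$ into $L$ successive ladder times. Write $S_n = X_n - X_0 = \sum_{i=1}^n \xi_i$. Because the only negative step is $-1$, the walk cannot jump over a level when it descends. So $S$ reaches $-L$ exactly by passing through $-1, -2, \dots, -L$ in turn. Let $H^{(1)} = \inf\{n \ge 0 : S_n = -1\}$. By the strong Markov property,
\[
H = H^{(1)}_1 + \dots + H^{(1)}_L,
\]
where the $H^{(1)}_j$ are i.i.d.\ copies of $H^{(1)}$. A finite sum of i.i.d.\ variables with exponential tails again has an exponential tail, since $\E[e^{cH}] = \E[e^{cH^{(1)}}]^L$. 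It therefore suffices to treat $H^{(1)}$. In fact, the Chernoff argument below applies directly to $H$ with the threshold $-L$, so the decomposition is only a convenience.

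The key step is to bound $\P(H > n)$ by a large-deviation estimate for $S_n$. On $\{H > n\}$ the walk has not yet reached $-L$. Since downward steps have size $1$ and $S_0 = 0$, this forces $S_n \ge -L+1$. Hence
\[
\P(H > n) \le \P(S_n \ge -L+1).
\]
Set $m := \E[\xi_1] = -p + L(1-p) < 0$. Let $\phi(\theta) = \E[e^{\theta \xi_1}] = p e^{-\theta} + (1-p)e^{\theta L}$. Then $\phi(0) = 1$ and $\phi'(0) = m < 0$, so there is a $\theta_0 > 0$ with $\phi(\theta_0) < 1$. Markov's inequality then gives
\[
\P(S_n \ge -L+1) \le e^{\theta_0 (L-1)} \phi(\theta_0)^n,
\]
which decays exponentially in $n$. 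This establishes the exponential tail of $H$, and in particular $\E[H] = \sum_n \P(H > n) < \infty$.

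The only point needing care is the inclusion $\{H > n\} \subseteq \{S_n \ge -L+1\}$. Since $S$ moves down only by unit steps, it cannot pass from a value $\ge -L+1$ to a value $\le -L-1$ without visiting $-L$, and this is exactly what justifies the inclusion. Finiteness of $H$ almost surely follows from the same bound, or alternatively from $S_n \to -\infty$ by the SLLN together with the no-overshoot property. No step here is a genuine obstacle; the argument is a standard Cramér–Chernoff estimate exploiting the skip-free downward structure.
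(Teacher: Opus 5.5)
Your proof is correct and follows the same route as the paper: bound $\P(H>n)$ by $\P(S_n > -L)$ and control the latter with a Chernoff--Cram\'er estimate. The differences are cosmetic. The paper passes through the rate function $I(y)$ at $y=\E[\xi_1]/2$ (for $n \ge L/|y|$) and exhibits an explicit optimizer $\lambda^*$ to show $I(y)>0$; you instead take any $\theta_0>0$ with $\phi(\theta_0)<1$, which exists because $\phi'(0)<0$, and you explicitly justify the inclusion $\{H>n\}\subseteq\{S_n\ge -L+1\}$ via the skip-free downward steps, which the paper uses without comment.
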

\begin{proof}
    $X_n - X_0 = \sum_{i = 1}^{n} \xi_i$ is a sum of i.i.d. random variables. Further we see that for $\lambda \in \mathbb{R}$ $\E[e^{\lambda \xi}] = p e^{-\lambda} + (1-p) e^{L\lambda} < \infty$.Let $y = \E[\xi_1] /2$. For $n \geq \frac{L}{y}$,
    \[
        \P(H \geq n+1) \leq \P(Z_n-Z_0 > -L)\leq \P(Z_n-Z_0 > ny) \leq e^{-n I(y)},
    \]
    with $I(y) = \sup_{\lambda \in \mathbb{R}}(\lambda y - \log(\E[e^{\lambda \xi_1}]))$.\\
    To see that $I(y) \neq 0$ we take $\lambda^* = \frac{1}{L+1} \log\left(\frac{p(1+y)}{(L-y)(1-p)}\right)$, and we see that $\lambda^* y - \log(\E[e^{\lambda^* \xi_1}]) >0$.\\
    So $\E[H] = \sum_{i = 0}^{\infty} \P(H \geq i) < \infty$ and $H$ has an exponential tail.\\
\end{proof}
\begin{proof}[Proof of Lemma \ref{deathbirthchain}]\label{deathbirthchainproof}
    As the transition rates  of $(A_t)_{t \geq 0}$ are bounded away from zero we can look at the embedded discrete Markov chain $(A_n)_{n \geq 0}$ instead of the continuous time Markov process.
    
    Let $x > \alpha L (1 + \mu^{-1})$, $T^x_1 = \inf\{n \geq 1: A_n = x\}$ and $T^x_{k+1} = \inf\{n >T^x_k: A_n = x\}$ for $k \geq 1$. For easier notation we write $T^x_0 = 0 $. Further we define $N = \max\{k \geq 0 : T^x_k < \tau_1\}$.\\
    We now can write 
    \begin{eqnarray*}
        \tau_1 = \sum_{k = 0}^{N-1} \left(T^x_{k+1} - T^x_{k}\right) + \left(\tau_1 - T^x_N\right).
    \end{eqnarray*}
    First we see that $\left(\tau_1 - T^x_N\right)$ can be expressed as a hitting time (from $x$ to $0$ if $N \geq 1$ or from $0$ to $0$ if $N=0$) on a finite irreducible Markov chain. So there exists an $\varepsilon_1$ such that for all $\varepsilon < \varepsilon_1$,\; $\E[e^{\varepsilon\left(\tau_1 - T^x_N\right)}] < \infty$.\\
    Next we define for $k \geq 0$  $T^{\geq x}_{k} = \inf\{n >T^x_k: A_n \geq x\}$. Note that $T^{x}_{k} < T^{\geq x}_{k} \leq T^{x}_{k+1}$. First we see that on the event $\{k < N\}$, $T^{\geq x}_{k} -T^{ x}_{k}$ can be seen as a hitting time on a finite Markov chain, so there is an $\varepsilon_2$ such that for all $\varepsilon < \varepsilon_2$,\; $\E[e^{\varepsilon\left(T^{\geq x}_{k} -T^{ x}_{k}\right)}| k < N] < \infty$.\\
    Next we look at $T^{ x}_{k+1}-T^{\geq x}_{k}$. We can couple between $T^{\geq x}_{k}$ and $T^{ x}_{k+1}$ $(A_n)_{n \in \{T^{\geq x}_{k},...,T^{ x}_{k+1}\} }$ to a random walk as defined in Lemma \ref{LemmaHittingTimeRW}. Let $H$ be as in Lemma \ref{LemmaHittingTimeRW}  then we get that $T^{ x}_{k+1}-T^{\geq x}_{k} \leq H$. But we know that $H$ has some exponential moments so there exists an $\varepsilon_3>0$ such that for all $\varepsilon < \varepsilon_3$,\; $\E[e^{\varepsilon\left(T^{ x}_{k+1}-T^{\geq x}_{k}\right)}] < \infty$.\\
    So for $\varepsilon < \min\{\varepsilon_2,\varepsilon_3\}$ we have $\E[e^{\varepsilon\left(T^{ x}_{k+1}-T^{x}_{k}\right)}| k < N] < \infty$.\\
    Let $p = \prod_{k=1}^{x} \frac{\mu k}{\mu k +\alpha}$ then $\P(N = k) \leq \P(N \geq k) \leq (1-p)^k$. 
    \begin{eqnarray*}
        \E[\prod_{k = 0}^{N-1} e^{\varepsilon\left(T^{ x}_{k+1}-T^{x}_{k}\right)}] &=& \sum_{n = 0}^{\infty} \E[\prod_{k = 0}^{N-1} e^{\varepsilon\left(T^{ x}_{k+1}-T^{x}_{k}\right)}|N = n] \P(N = n)\\
        &\leq& \sum_{n = 0}^{\infty} \E[\prod_{k = 0}^{n-1} e^{\varepsilon\left(T^{ x}_{k+1}-T^{x}_{k}\right)}|N = n] (1-p)^n\\
        &\leq& \sum_{n = 0}^{\infty} \E[e^{\varepsilon\left(T^{ x}_{1}-T^{x}_{0}\right)}|N > 0]^n (1-p)^n.\\
    \end{eqnarray*}
    By dominated convergence we get that $\lim_{\varepsilon \rightarrow 0 } \E[e^{\varepsilon\left(T^{ x}_{1}-T^{x}_{0}\right)}|N > 0] = 1$, so we can take $\varepsilon_4$ small enough to get that for all $\varepsilon < \varepsilon_4$ \[ \E[e^{\varepsilon\left(T^{ x}_{1}-T^{x}_{0}\right)}|N > 0] (1-p) < 1\] and \[\E[\prod_{k = 0}^{N-1} e^{\varepsilon\left(T^{ x}_{k+1}-T^{x}_{k}\right)}] < \infty.\] 
    Altogether we have that for $\varepsilon < \min\{\varepsilon_1, \varepsilon_4\}$, 
    \[\E[e^{\varepsilon \tau_1}] = \E[e^{\varepsilon\left(\tau_1 - T^x_N\right)}] \E[\prod_{k = 0}^{N-1} e^{\varepsilon\left(T^{ x}_{k+1}-T^{x}_{k}\right)}] < \infty.\]
\end{proof}

\section{CBRW not a time change of VBRW}
\label{sect:Counter}
Let us consider the network $\{0, 1, 2\}$ endowed with nearest-neighbor edges and random conductances i.i.d.\ with the distribution
\begin{equation*}
	\omega_0(\{0, 1\})  \sim \frac{1}{2}\delta_{M} + \frac{1}{2}\delta_{1},
\end{equation*}
with $M \gg 1$ that will be fixed later. Concretely, the random conductances under the measure $\mathbf{P}$ are i.i.d.\ Bernoulli $1/2$ with support $\{1, M\}$. Moreover, the conductances are re-sampled according to i.i.d.\ Poisson processes with intensity $\mu>0$ independently of everything else. For simplicity of the exposition we consider $\mu = 1$. 

Let $P^{\omega_t, \mathrm{VS}}$ be the quenched law of the \textbf{variable speed} walk in the random walk in the dynamic environment. This is the Markov chain that jumps from $x\in \mathbb{Z}^d$ to a neighbor $y$ at time-inhomogeneous rate $\omega_t(x, y)$. Let $\mathbb{P}^{\mathrm{VS}} = \mathbf{E}[P^{\omega_t, \mathrm{VS}}]$ be the annealed law.

Similarly, we let $P^{\omega_t, \mathrm{CS}}$ be the quenched law of the \textbf{constant speed} walk in the random walk in the dynamic environment. Let $\mathbb{P}^{\mathrm{CS}} = \mathbf{E}[P^{\omega_t, \mathrm{CS}}]$ be the annealed law.

We claim that one cannot find a (possibly random) time-change $A_t$ such that
\begin{equation*}
	X^{\mathrm{CS}}_{A_t} = X^{\mathrm{VS}}_{t}.
\end{equation*}
To this end we construct an event that would not be affected by the time change and show that it has different probabilities in the two models.
\begin{equation*}
	E \coloneqq \left\{X \text{ hits } 0 \text{ }\sqrt{M} \text{ times before hitting }2\right\}.
\end{equation*}
This event is not affected by any monotone increasing time change.

\textbf{Positive probability under VBRW}. We observe that the probability that the initial configuration is $\omega_0(\{0, 1\}) = M, \omega_0(\{1, 2\}) = 1$ is exactly $1/4$. Let us denote this event $A_1$ and let us condition on this event. Let $e_1, e_2$ be the exponential random variables associated to the edges. The event $e_1 > 1, e_2>1$, this event also has a strictly positive probability (meaning, independent of $M$), say $p>0$. Let us denote this event $A_2$ and let us condition on this event.

Observe that in the time interval $[0, 1]$ the edge with conductance $M$ has $\mathrm{Pois}(M)$ many rings (to which random walk jumps correspond) and the edge with conductance $M$ has $\mathrm{Pois}(1)$ many.

Thus, we obtain
\begin{align}
	\mathbb{P}^{\mathrm{VS}}(E) &\ge \mathbf{P}(A_1) \mathbf{P}(A_2) P^\omega_t\left(\mathrm{Pois}(M) > 2\sqrt{M},\mathrm{Pois}(1) = 0\right)\\
	& \ge \frac{p}{4e}P^\omega_t\left(\mathrm{Pois}(M) > 2\sqrt{M}\right) = \frac{p}{4e}\left(1 - g(M)\right),\label{eqn:CSLB}
\end{align}
with $g(M) \to 0$ as $M \to \infty$. Indeed, for the last step we observe that 
\begin{equation*}
	P^\omega_t\left(\mathrm{Pois}(M) \le 2\sqrt{M}\right) = \sum_{j = 0}^{2\sqrt{M}} e^{-M} \frac{M^j}{j!}.
\end{equation*}
Let $k = 2\sqrt{M}$, then we observe that 
\begin{equation*}
	\frac{M^{j-1}}{(j-1)!} \frac{j!}{M^j} \le \frac{k}{M} \le \frac{1}{\sqrt{M}},
\end{equation*}
thus the terms are decreasing and hence  
\begin{equation*}
	\sum_{j = 0}^{k} e^{-M} \frac{M^j}{j!} \le e^{-M}k \frac{M^{k}}{k!},
\end{equation*}
applying Stirling's formula, we obtain 
\begin{equation*}
	\frac{M^{k}}{k!} \le \left(\frac{eM}{k}\right)^k \le e^{\log \left( \left(\frac{eM}{k}\right)^k\right)} \le e^{k \log(M)}.
\end{equation*}
Which means that the term $e^{-M}$ dominates whenever $M$ is large enough.

The key point here is that the lower bound for $\mathbb{P}^{\mathrm{VS}}(E)$ computed at \eqref{eqn:CSLB} stays bounded away from $0$ as $M \to \infty$.

\textbf{Decaying probability under CBRW}.

We aim to show that for all $t$ and all environment configurations $\{\omega_s\}_{s \le t}$ if $X_t^{\mathrm{CS}}$ is at $1$ then with positive probability
\begin{equation*}
	\mathbb{P}_1^{\mathrm{CS}}\left( X_{T_t} = 2 \mid \{\omega_s\}_{s \le t}, \{X_s\}_{s \le t}\right) \ge p_1 > 0,
\end{equation*}
where $T_t$ is the first jump after time $t$. Indeed, let $A_3$ be the event that both edges refresh before time $1$ while the random walk jump happens after time $1$. Since here we consider the constant speed random walk this has strictly positive probability (regardless of $M$). Moreover, by memoryless property (of both the clocks of the environment and the walk) this event has the same probability (say $p_2$) after any time-shift by $t>0$.

Furthermore, we observe that whenever $A_3$ happens then with environment probability $3/4$ the random walk has probability at least $1/2$ to jump to $2$ before jumping to $1$.

But then, by the strong Markov property of $\omega_s$ and of the quenched random walk, we obtain that
\begin{equation*}
	\mathbb{P}_1^{\mathrm{CS}}(E) \le \left( 1 - \frac{3p_1}{8}\right)^{\sqrt{M}} \le e^{-c \sqrt{M}}.
\end{equation*}

We conclude by observing that the constant speed and the variable speed walk give different probabilities to the event $E$ that is not affected by the time change. Hence, we cannot express $X^{\mathrm{VS}}$ as a (random) time-change of $X^{\mathrm{CS}}$.

\printbibliography
\end{document}